\documentclass[11pt,reqno]{amsart}
\usepackage[utf8]{inputenc}
\usepackage{amsmath}
\usepackage{amsfonts}
\usepackage{amssymb}
\usepackage{amsthm}
\usepackage{appendix}
\usepackage{indentfirst}
\usepackage{mathtools}
\mathtoolsset{showonlyrefs=true}
\usepackage{enumitem}
\usepackage{hyperref}
\usepackage{geometry}
\usepackage{xcolor}

\hypersetup{
colorlinks=true,
linkcolor=blue,
citecolor=blue,
}

\title[Stability of elliptic Hartree two-solitons]{Finite-time stability of two-soliton solutions of the Hartree equation with elliptic trajectories}
\author{Yutong Wu}
\address{Department of Mathematics, Yale University, New Haven, CT 06511} 
\email{yutong.wu.yw894@yale.edu}

\subjclass[2020]{Primary: 35B35. Secondary: 35Q55.}
	
\keywords{Hartree equation, two-soliton, elliptic trajectory, stability}

\begin{document}

\numberwithin{equation}{section} 

\newtheorem{thm}{Theorem}
\newtheorem{lem}{Lemma}[section]
\newtheorem{prop}[lem]{Proposition}
\newtheorem{defn}[lem]{Definition}
\newtheorem{rmk}[lem]{Remark}
\newtheorem{cor}{Corollary}
\newtheorem{ass}{Assumption}

\def\ol{\overline}
\renewcommand{\Re}{\mathrm{Re}}
\renewcommand{\Im}{\mathrm{Im}}
\def\eps{\varepsilon}

\def \N{\mathbb{N}}
\def \R{\mathbb{R}}
\def \C{\mathbb{C}}

\def \d{\mathrm{d}}
\def \dt{\mathrm{d}t}
\def \dx{\mathrm{d}x}

\def \E{\mathcal{E}}
\def \G{\mathcal{G}}
\def \H{\mathcal{H}}
\def \L{\mathcal{L}}

\newcommand{\f}[2]{\frac{#1}{#2}}

\begin{abstract}
We prove the finite-time stability of two-soliton solutions of the three-dimensional gravitational Hartree equation whose centers remain close to an elliptic solution of the Kepler two-body problem.  
\end{abstract}

\maketitle

\section{Introduction} \label{sec intro}

We consider the Hartree equation in three dimensions
\begin{equation} \label{eq hartree}
    iu_t+ \Delta u- \phi_{|u|^2}u=0,
\end{equation}
where $u: \R \times \R^3 \to \C$ and $\phi_{|u|^2}= \Delta^{-1}(|u|^2)= -\frac{1}{4\pi|x|} *|u|^2$. The equation arises in physics as an effective model in the mean-field limit of many-body quantum systems.

We review some properties of the Hartree equation. We refer to the monograph \cite{cazenave2003semilinear} for general background on dispersive equations, much of which applies to the Hartree equation. 

The Hartree equation is mass ($L^2$)-subcritical, so the Cauchy problem of \eqref{eq hartree} is globally wellposed in $H^1$. 

The equation also possesses a large family of symmetries. Namely, if $u$ solves \eqref{eq hartree}, then for any $(t_0, \alpha_0, \beta_0, \lambda_0, \gamma_0) \in \R \times \R^3 \times \R^3 \times \R_+ \times (\R/2\pi \mathbb{Z})$,
\begin{equation} \label{eq symmetry}
    v(t,x)= \lambda_0^2 u(\lambda_0^2 t+t_0, \lambda_0 x- \alpha_0- \beta_0 t) e^{i (\frac{1}{2} \beta_0 \cdot x- \frac{1}{4}|\beta_0|^2 t+ \gamma_0 )}
\end{equation}
also solves \eqref{eq hartree}. Here $\alpha_0$ corresponds to translation, $\beta_0$ corresponds to Galilean transformation, $\lambda_0$ corresponds to scaling, and $\gamma_0$ corresponds to phase rotation. Noether's theorem yields the following conserved quantities:
\begin{align}
    &\text{Mass: } && \mathcal{M}(u)= \int |u(t,x)|^2 \d x, \\
    &\text{Momentum: } && \mathcal{P}(u)= \int \Im \big( \nabla u(t,x) \overline{u(t,x)} \big) \d x, \\
    &\text{Hamiltonian: } && \H(u)= \frac{1}{2} \int |\nabla u(t,x)|^2 \d x- \frac{1}{4} \int \big| \nabla \phi_{|u|^2} (t,x) \big|^2 \d x.
\end{align}

There is a special type of solutions of \eqref{eq hartree}, called \emph{solitons}. These are solutions $u$ of the form $u(t,x)= e^{it} W(x)$, where $W$ satisfies $\Delta W- \phi_{|W|^2} W= W$. We denote by $Q$ the unique positive radial solution to 
\begin{equation} \label{eq ground state}
    \Delta Q- \phi_{Q^2}Q= Q.
\end{equation}
For the existence and uniqueness of $Q$, see \cite{Liebgroundstate}. The function $Q$ is called the \emph{ground state}. It decays exponentially, so it can be viewed as a well-localized object. The ground state can also be characterized in a variational way as the minimizer of $\H$ with a fixed $L^2$ norm. The orbital stability of the ground state solitary wave follows by combining \cite{orbitalstability} and \cite{Liebgroundstate}.

Using the symmetries of the equation, the ground state soliton $e^{it} Q(x)$ generates a family of solitary waves. \emph{Multisolitons}, or \emph{Multisolitary waves}, are sums of solitary waves and are expected to play an essential role in the study of long-time dynamics. For general dispersive equations, the \emph{soliton resolution conjecture} claims that a generic solution splits into a multisolitary wave and a radiation term as time tends to infinity. For local dispersive equations, there is extensive literature on this subject; for example, we refer to \cite{DKMwaveodd, JendreyLawrieNLW} and references therein. Multisoliton solutions have also been extensively studied in different local models; see, for example, \cite{KsolitarywavesofNLS, MartelgKdV, NsolitonKleinGordon} for existence results and \cite{MMTstabilityforNLS, MMTstabilityforgKdV} for stability results.

The long-time dynamics of the Hartree equation is more subtle than that of local models such as the nonlinear Schr\"odinger equation (NLS), due to the nonlocality of the nonlinearity. Although $Q$ decays exponentially, the nonlinear potential $\phi_{Q^2}$ decays only at a polynomial rate and is long-range:
\begin{equation}
    \phi_{Q^2}(x) \sim -\frac{1}{|x|} \quad \text{as} \;\; x \to \infty.
\end{equation}
If we consider a two-soliton
\begin{equation}
    u= \frac{1}{\lambda_1^2} Q \left( \frac{x-\alpha_1}{\lambda_1} \right) e^{i\gamma_1+ i\beta_1 \cdot x}+ \frac{1}{\lambda_2^2} Q \left( \frac{x-\alpha_2}{\lambda_2} \right) e^{i\gamma_2+ i\beta_2 \cdot x},
\end{equation}
then the nonlinearity in \eqref{eq hartree} naturally produces a larger interaction error than in NLS.

It was shown in \cite{KMR2bodyHartree} that the effective dynamics of two-solitons of the Hartree equation is governed by the two-body problem
\begin{equation} \label{eq 2-body}
    \dot{\alpha}_j= 2\beta_j, \quad \dot{\beta}_j= -\frac{\| Q \|_{L^2}^2}{4\pi \lambda_{j+1}} \frac{\alpha_j- \alpha_{j+1}}{|\alpha_j- \alpha_{j+1}|^3}, \quad j=1,2,
\end{equation}
where $\alpha_1, \alpha_2, \beta_1, \beta_2 \in C^1(\R,\R^3)$ and $\lambda_1, \lambda_2>0$ are constants. We use the convention that indices are taken modulo $2$, so that $j+1=1$ when $j=2$. Here, the Kepler potential comes from the leading long-range interaction between the two copies of $Q$. This approximation remains valid for $m$-solitons with arbitrary $m \ge 2$ as was recently generalized in \cite{Hartree3Dmultisoliton}; the effective dynamics of $m$-solitons is governed by the $m$-body problem.

In \eqref{eq 2-body}, the center of mass evolves freely: $\lambda_1 \ddot{\alpha}_2 + \lambda_2 \ddot{\alpha}_1= 0$. We will normalize the center of mass by assuming
\begin{equation}
    \lambda_1 \alpha_2+ \lambda_2 \alpha_1=0
\end{equation}
throughout the paper. The two-body problem also admits the conserved energy
\begin{equation}
    E_0:= |\beta_2- \beta_1|^2- \frac{\|Q\|_{L^2}^2}{4\pi} \left( \frac{1}{\lambda_1}+ \frac{1}{\lambda_2} \right) \frac{1}{|\alpha_2- \alpha_1|}.
\end{equation}
It follows that if the two bodies do not collide initially, that is, $\alpha_1(0) \neq \alpha_2(0)$, then they remain away from each other for all time. Let $\alpha= \alpha_2- \alpha_1$. Then global solutions to \eqref{eq 2-body} can be classified according to the sign of $E_0$ or, equivalently, the growth rate of $|\alpha(t)|$:

\begin{itemize}
    \item Hyperbolic: if $E_0>0$, then $|\alpha(t)| \sim t$ as $t \to \infty$ and $\alpha(t)$ describes a hyperbola.
    \item Parabolic: if $E_0=0$, then $|\alpha(t)| \sim t^{\frac{2}{3}}$ as $t \to \infty$ and $\alpha(t)$ describes a parabola.
    \item Elliptic: if $E_0<0$, then $|\alpha(t)|$ is bounded and $\alpha(t)$ describes an ellipse.
\end{itemize}

In both the hyperbolic case and the parabolic case, we have $|\alpha(t)| \to \infty$ as $t \to \infty$, so we say these are \emph{expansive} cases. For expansive dynamics, the separation between the two solitons tends to infinity, weakening the soliton interaction over time. Therefore, one may hope to construct solutions to the Hartree equation \eqref{eq hartree} that, as time goes to infinity, approach two-solitons or multisolitons whose center dynamics is described by an expansive solution to the two-body problem or the $m$-body problem. In particular, \cite{KMR2bodyHartree} constructed two-soliton solutions to the Hartree equation whose center dynamics approaches an expansive solution to \eqref{eq 2-body}, while \cite{Hartree3Dmultisoliton} constructed multisoliton solutions whose trajectories approach some classes of expansive solutions to the $m$-body problem. The recent parallel results for the four-dimensional Hartree equation in \cite{Hartree4Dmultisoliton, Hartree4Dmixed} also considered expansive solutions to the adapted $m$-body problem. In general, expansive dynamics is more approachable because of the decay of the interaction error in time, and known results on multisoliton solutions of Hartree equations have mainly focused on expansive trajectories.

By contrast, in the elliptic case, the solitons remain at bounded separation, so the long-range interaction between the solitons persists. One cannot expect the small interaction errors to decay by waiting for large times. This makes the elliptic trajectory much more delicate to study. Until now, there has been no construction of solutions to the Hartree equation \eqref{eq hartree} whose asymptotic behavior is governed by elliptic solutions to \eqref{eq 2-body}. 

\;\\
\textbf{The main result:}

In this paper, we consider two-soliton solutions to \eqref{eq hartree} with elliptic trajectories. To the author's knowledge, this is the first work to study elliptic center dynamics for the Hartree equation. Even in strongly interacting multisoliton regimes for other models such as \cite{energycriticalwave, Kinkantikinkonaline}, there is usually still some decay of the interaction error in time. Therefore, the elliptic regime considered in this paper represents a genuinely new dynamical setting. 

Our result proves the finite-time stability of two-soliton solutions with approximate elliptic trajectories. As has been explained, the interaction remains of fixed size along the elliptic orbit, so the smallness of the error must come from the order of the approximation rather than from time decay. We thus do not study asymptotic behavior as time tends to infinity; instead, we prove finite-time stability for carefully chosen profiles.

\begin{thm} \label{thm finite time stability}
Given $\delta>0$, $C_e>0$ and $\lambda_1, \lambda_2>0$, there exist $\nu>0$, $\bar{r}_0>0$ such that for any $r_0> \bar{r}_0$, there exists $\kappa>0$ with the following property. 

If $(\alpha_1(t), \alpha_2(t), \beta_1(t), \beta_2(t), \lambda_1, \lambda_2)$ is an elliptic solution to \eqref{eq 2-body} with 
\begin{equation}
    r_0 \le |\alpha_2(t)- \alpha_1(t)| \le C_e r_0, \quad \forall t \ge 0
\end{equation} 
and $\gamma_1(t)$, $\gamma_2(t)$ satisfy
\begin{equation}
    \dot{\gamma}_j= \frac{1}{\lambda_j^2}- |\beta_j|^2- \dot{\beta}_j \cdot \alpha_j+ \frac{\| Q \|_{L^2}^2}{4\pi \lambda_{j+1}} \frac{1}{|\alpha_2- \alpha_1|}, \quad j=1,2,
\end{equation} 
then there exists $R_0 \in H^1(\R^3)$ with 
\begin{equation} \label{eq R_0 close to initial}
    \left\| R_0 - \sum_{j=1}^2 \frac{1}{\lambda_j^2} Q \Big( \frac{\cdot- \alpha_j(0)}{\lambda_j} \Big) e^{i\gamma_j(0)+ i\beta_j(0) \cdot x} \right\|_{H^1} \lesssim_{\lambda_1, \lambda_2} r_0^{-3}
\end{equation}
such that for $u_0 \in H^1(\R^3)$, $\| u_0- R_0 \|_{H^1} < \kappa$ implies
\begin{equation} \label{eq estimate in thm}
    \left\| u(t) - \sum_{j=1}^2 \frac{1}{\lambda_j^2} Q \Big( \frac{\cdot- \alpha_j(t)}{\lambda_j} \Big) e^{i\gamma_j(t)+ i\beta_j(t) \cdot x} \right\|_{H^1} < \delta, \quad \forall 0 \le t \le \nu r_0^2,
\end{equation}
where $u$ is the solution to \eqref{eq hartree} with $u(0)= u_0$.
\end{thm}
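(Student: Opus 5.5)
The proof will be a modulation plus energy argument run over the window $[0,\nu r_0^2]$. The approximate solution is improved to higher order so that the residual is small, and an almost-conserved Lyapunov functional then controls the remainder on a timescale set by the size of that residual.

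\textbf{Step 1: refined approximate solution.} Starting from the two-soliton $R=\sum_j R_j$, with $R_j=\lambda_j^{-2}Q((x-\alpha_j)/\lambda_j)e^{i\gamma_j+i\beta_j\cdot x}$, I will build a corrected profile $R_{\mathrm{app}}=\sum_j(R_j+\lambda_j^{-2}A_j(\cdot)e^{i\Gamma_j})$, where $A_j$ is a small correction. To find it, I Taylor expand the long-range potential $\phi_{|R_{j+1}|^2}$ around $\alpha_j$. The constant term $\frac{\|Q\|^2}{4\pi\lambda_{j+1}}|\alpha|^{-1}$ is absorbed by the phase equation for $\dot\gamma_j$ in the statement. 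The linear term $\nabla\phi\cdot(x-\alpha_j)\sim r_0^{-2}$ is absorbed by the Kepler law for $\dot\beta_j$; this is where the $-\dot\beta_j\cdot\alpha_j$ term in $\dot\gamma_j$ comes from. The quadratic (tidal) term is of size $r_0^{-3}|x-\alpha_j|^2$. I remove it by solving $L_+A_j=-\tfrac12 (D^2\phi)(x-\alpha_j,x-\alpha_j)Q$, where $L_+$ is the linearized operator around $Q$. This is solvable by Fredholm theory: the right-hand side is even, so it is orthogonal to $\ker L_+=\mathrm{span}\{\partial_kQ\}$ (nondegeneracy of $Q$, Lenzmann). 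The solution has size $O(r_0^{-3})$. I also include the slow time dependence of $D^2\phi(\alpha(t))$, whose derivative is $O(r_0^{-4}\cdot|\beta|)=O(r_0^{-4.5})$. The outcome is a residual $\Psi=i\partial_tR_{\mathrm{app}}+\Delta R_{\mathrm{app}}-\phi_{|R_{\mathrm{app}}|^2}R_{\mathrm{app}}$ with $\|\Psi\|_{H^1}\lesssim r_0^{-4}$, together with $\|R_{\mathrm{app}}(0)-R(0)\|_{H^1}\lesssim r_0^{-3}$. I set $R_0:=R_{\mathrm{app}}(0)$, which gives \eqref{eq R_0 close to initial}. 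Exponential tails of $Q$ give overlap errors $e^{-cr_0}$, which are negligible.

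\textbf{Step 2: modulation.} While $u$ stays close to $R_{\mathrm{app}}$, I decompose $u=\tilde R_{\mathrm{app}}(\tilde\alpha_j,\tilde\beta_j,\tilde\lambda_j,\tilde\gamma_j)+\varepsilon$, imposing the standard orthogonality conditions on $\varepsilon$ (against $xQ$, $\nabla Q$, $\Lambda Q$ and $Q$ in each soliton's frame) via the implicit function theorem. The modulation equations then show that the modulated parameters follow \eqref{eq 2-body}, and the $\dot\gamma$ law, up to errors $O(\|\varepsilon\|_{H^1}+r_0^{-4})$.

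\textbf{Step 3: energy estimate (main obstacle).} I will use the localized Lyapunov functional of Martel--Merle--Tsai type, adapted to Hartree as in \cite{KMR2bodyHartree}. It is built from $\H$, localized masses $\mathcal M_j$ and localized momenta $\mathcal P_j$ with cutoffs around each soliton, with coefficients chosen according to $\lambda_j,\beta_j$. This gives coercivity, $\mathcal G(\varepsilon)\gtrsim\|\varepsilon\|_{H^1}^2$ modulo the modulation directions, which are controlled through the localized mass and momentum conservation. The time derivative satisfies $|\frac{d}{dt}\mathcal G|\lesssim r_0^{-4}\|\varepsilon\|_{H^1}+r_0^{-1}\|\varepsilon\|_{H^1}^2+\|\varepsilon\|^3$. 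The quadratic term comes from the cutoffs moving at speed $|\beta|\sim r_0^{-1/2}$ over distance $r_0$, and from the nonlocal cross terms $\phi_{\Re(\bar R_{j+1}\varepsilon)}R_j$, which is the delicate nonlocal part. The difficulty is that this quadratic error cannot decay in time on an elliptic orbit. Integrating over $[0,T]$ with $T=\nu r_0^2$, the source contributes $r_0^{-4}T\sup\|\varepsilon\|\lesssim \nu r_0^{-2}\sup\|\varepsilon\|$, which is harmless. The quadratic part would give Gronwall growth $e^{CT/r_0}$, which is too weak as stated. So I will try to sharpen that error to $O(r_0^{-2})\|\varepsilon\|^2$, using that the leading interaction is a linear potential absorbed by the Galilean modulation; then $e^{C\nu}$ with $\nu$ small is fine. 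A bootstrap then yields $\|\varepsilon(t)\|_{H^1}\lesssim\kappa+\nu r_0^{-2}+o(1)$, and the parameters deviate by at most $\delta/2$ over $[0,\nu r_0^2]$. With $\bar r_0$ large and $\kappa$ small, this gives \eqref{eq estimate in thm}.

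Finally, the drift of the modulated parameters from the exact Kepler orbit must also be controlled. The elliptic orbit is stable on bounded times, but the parameter errors are integrated twice, giving roughly $T^2(r_0^{-4}+\|\varepsilon\|)$. This is what forces the restriction $t\le\nu r_0^2$ with small $\nu$, and it is also why the estimate is stated with $\kappa$ depending on $r_0$.
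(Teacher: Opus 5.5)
There is a genuine gap, and it is exactly the step you flag as the main obstacle.

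\textbf{The energy estimate does not reach $\nu r_0^2$.} With a fixed-order (tidal) corrector your residual is only polynomially small, $\|\Psi\|_{H^1}\lesssim r_0^{-4}$. The localized functional, however, satisfies $|\frac{d}{dt}\mathcal{G}|\lesssim r_0^{-1}\|\varepsilon\|_{H^1}^2+\dots$. Gronwall then costs $e^{Ct/r_0}$, which equals $e^{C\nu r_0}$ at $t=\nu r_0^2$, and no power of $r_0^{-1}$ absorbs that. Honestly, your scheme reaches only $t\lesssim r_0\log r_0$.

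Your proposed repair is to sharpen the quadratic coefficient to $r_0^{-2}$ by Galilean absorption of the linear interaction potential. This does not touch the dominant source of the $r_0^{-1}$. That source is the mass flux through the cutoffs in the localized-mass part:
\begin{equation}
\frac{d}{dt}\sum_{j}\Big(\frac{1}{\lambda_j^2}+|\beta_j|^2\Big)\int\varphi_j|\varepsilon|^2 \ni 2\Big[\Big(\frac{1}{\lambda_1^2}+|\beta_1|^2\Big)-\Big(\frac{1}{\lambda_2^2}+|\beta_2|^2\Big)\Big]\int\nabla\varphi_1\cdot\Im\big(\nabla\varepsilon\,\overline{\varepsilon}\big).
\end{equation}
This term is of size $r_0^{-1}\|\varepsilon\|_{H^1}^2$ for three reasons:
\begin{itemize}
\item $|\nabla\varphi_j|\sim r^{-1}$ is forced by the separation $r\sim r_0$.
\item The coefficients differ when $\lambda_1\neq\lambda_2$.
\item On an elliptic orbit there is no Martel--Merle--Tsai monotonicity to give it a sign.
\end{itemize}
The cutoff motion you identify only contributes $|\beta|r_0^{-1}\sim r_0^{-3/2}$. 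The nonlocal cross terms become much smaller once the mass and dipole orthogonality conditions are used. So the sharpened estimate is unsupported, and the time scale $\nu r_0^2$ is not reached.

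\textbf{What the paper does instead.} It keeps the crude $r_0^{-1}$ rate and makes the residual exponentially small.
\begin{itemize}
\item It pushes the construction of Proposition \ref{prop construction of approximate bubbles} to order $N\sim r_0$.
\item It truncates the multipole terms of order $>10$ by $\chi(y_j/L)$ with $L\sim r_0$, so the Taylor expansion of $|\alpha-\zeta|^{-1}$ is never used where it diverges.
\item It tracks the factorial growth of $b_j^{(n)},m_j^{(n)},T_j^{(n)}$, which are of size $C^nL^{n-1}$ by the recursion in Appendix \ref{appendix sequence lemma}. Optimal truncation then gives $|\Psi|\lesssim e^{-c_0r_0}$.
\end{itemize}
Then $e^{-c_0r_0}e^{C\nu r_0}$ is small for $\nu$ small, and this balance is precisely what produces the $r_0^2$ time scale. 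A corrector of any fixed order $N$ independent of $r_0$ cannot achieve this; it would only give times $O(Nr_0\log r_0)$.

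\textbf{A second inconsistency in your parameter-drift accounting.} Even if your energy bound is granted, you obtain $\|\varepsilon\|\lesssim\kappa+\nu r_0^{-2}$. Feeding this into your own drift estimate $T^2(r_0^{-4}+\|\varepsilon\|)$ gives $\nu^2r_0^4\cdot\nu r_0^{-2}=\nu^3r_0^2$, which is not small. In the paper, $\|\varepsilon\|$ and $Mod$ are both $O(e^{-c_0r_0})$, so the Gronwall step comparing $g$ to $g^{(N)}$ is harmless. Closeness to the exact Kepler orbit is handled separately in Proposition \ref{prop stability of 2-body}, via Duhamel, the $r_0^{-7}$ accuracy of $B_j^{(N)}$ from Proposition \ref{prop no lower order error}, and the linear-in-time growth of $D\Phi_\tau$.
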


\begin{rmk}
The period of the ellipse $\alpha_2(t)- \alpha_1(t)$ has size $r_0^{3/2}$, so the time scale $r_0^2$ on which we establish stability is much longer than the period. In other words, the two-soliton solution will remain close to the elliptic orbit for many cycles before, if ever, departing from it.
\end{rmk}

\begin{rmk}
The $\frac{\| Q \|_{L^2}^2}{4\pi \lambda_{j+1}} \frac{1}{|\alpha_2- \alpha_1|}$ term in the equation for $\gamma_j$ is nonstandard. We call it an \emph{interaction shift} and it is necessary in the construction of approximate solutions. See Remark \ref{rmk interaction shift} for further explanation.
\end{rmk}

\begin{rmk}
The assumption $r_0 \le |\alpha_2(t)- \alpha_1(t)| \le C_e r_0$ means that the elliptic trajectory has size $r_0$ and uniformly bounded eccentricity. The eccentricity bound is necessary for establishing the stability of the center dynamics in Proposition \ref{prop stability of 2-body}.
\end{rmk}

\begin{rmk}
We will construct approximate profiles $V_1, V_2 \in H^1(\R^3)$ close to $Q$ and take 
\begin{equation}
    R_0(x)= \sum_{j=1}^2 \frac{1}{\lambda_j^2} V_j \Big( \frac{x- \alpha_j(0)}{\lambda_j} \Big) e^{i\gamma_j(0)+ i\beta_j(0) \cdot x}.
\end{equation}
Here $V_1$ and $V_2$ encode higher-order correction terms compensating for the long-range effect of the Hartree equation. This is why $R_0$ is only required to be within $O(r_0^{-3})$ in $H^1$ of the naive two-soliton profile in \eqref{eq R_0 close to initial}.

As shown in the proof of the theorem in Section \ref{sec error estimates}, the profiles $V_1, V_2$ depend on 
\begin{equation}
    \alpha_2(0)- \alpha_1(0), \;\; \beta_2(0)- \beta_1(0), \;\; \lambda_1, \;\; \lambda_2, \;\; \gamma_1(0), \;\; \gamma_2(0).
\end{equation}
Consequently, $R_0$ depends on the initial parameters 
\begin{equation}
    \alpha_1(0), \;\; \alpha_2(0), \;\; \beta_1(0), \;\; \beta_2(0), \;\; \lambda_1, \;\; \lambda_2, \;\; \gamma_1(0), \;\; \gamma_2(0).
\end{equation}
\end{rmk}

The proof follows the same broad modulation strategy as \cite{KMR2bodyHartree,Hartree3Dmultisoliton}, but the elliptic setting changes both the role of the approximate solution and the order to which it must be constructed. In the expansive cases considered there, the separation between solitons tends to infinity, so interaction errors tend to $0$ as time tends to $\infty$. Therefore, when the order of approximation is high enough, the error decays nearly exponentially in time. In the elliptic case in this paper, the separation remains comparable to the fixed parameter $r_0$ along the elliptic orbit. Thus the main point is to construct an approximate solution whose error is small uniformly on the relevant time interval. By introducing a cutoff in \eqref{eq definition of psi^n,L} when defining approximate solutions and keeping track of the dependence of all constants on the approximation order $N$, it turns out that $N$ should be chosen proportional to the separation $r_0$ (see Remark \ref{rmk range of N, L}), although this relation is not apparent from the approximation scheme itself. 

More precisely, the proof has three main ingredients. First, in Section \ref{sec approximate solution}, we construct an $N$-th order approximate two-soliton and prove that its error is exponentially small in $r_0$ when $N \sim r_0$. Second, in Section \ref{sec ode analysis}, we show that the approximate center dynamics remains close to the elliptic Kepler dynamics on the required time scale. Third, in Section \ref{sec error estimates}, we use modulation and a localized coercive energy functional to control the difference between the true solution and the approximate profile. We also collect preparatory properties of the linearized operators in Section \ref{sec linear operator}.

The construction scheme of approximate solutions and the localized energy estimate are closest to \cite{KMR2bodyHartree,Hartree3Dmultisoliton}. The new difficulties arise from the bounded separation of elliptic trajectories. They are addressed by the lower-order cancellations in Proposition \ref{prop no lower order error} using the \emph{interaction shift} in Remark \ref{rmk interaction shift}, the quantitative accuracy estimate in Proposition \ref{prop accuracy of approximate solution} using the cutoff in the dipole expansion \eqref{eq definition of psi^n,L}, and the stability analysis of the approximate elliptic center dynamics in Proposition \ref{prop stability of 2-body}.

\section{Linearized operators around the ground state} \label{sec linear operator}

In this section, we study some properties of the linearized operators around the ground state. Write $u=e^{it} (Q+\varepsilon)$. Using \eqref{eq ground state}, we can rewrite \eqref{eq hartree} as
\begin{equation} 
    \partial_t \begin{pmatrix}
        \Re \ \varepsilon \\
        \Im \ \varepsilon
    \end{pmatrix}+ \begin{pmatrix}
        0 &  -L_- \\
        L_+ & 0
    \end{pmatrix} \begin{pmatrix}
        \Re \ \varepsilon \\
        \Im \ \varepsilon
    \end{pmatrix}= \begin{pmatrix}
        \Im \ \mathcal{N}(Q, \varepsilon) \\
        -\Re \ \mathcal{N}(Q, \varepsilon)
    \end{pmatrix},
\end{equation}
where $\mathcal{N}$ is of second or higher order in $\varepsilon$, and $L_+, L_-$, the linearized operators around $Q$, are defined by
\begin{equation}
    L_+f:= -\Delta f+ f+ \phi_{Q^2}f+ 2\phi_{Qf}Q, \quad L_-f:= -\Delta f+ f+ \phi_{Q^2}f.
\end{equation}

The kernels of $L_{\pm}$ necessarily contain modes generated by the symmetries of \eqref{eq hartree}. From \cite{Lenzmanngroundstate}, we know the ground state $Q$ is non-degenerate, meaning that the kernels of $L_{\pm}$ consist precisely of those symmetry directions:
\begin{equation}
    \ker (L_+)= \text{span} \{\partial_1 Q, \partial_2 Q, \partial_3 Q\}, \quad \ker(L_-)= \text{span} \{ Q \}.
\end{equation}

Let $\Lambda= 2+ x \cdot \nabla$. We recall \cite[Lemma 4.4]{Hartree3Dmultisoliton}.
\begin{lem} \label{lem coercivity of L+, L-}
There exist $\delta,c>0$ such that if $v \in H^1$ is real-valued, then
\begin{equation} \begin{gathered} 
    |(v,Q)|+ |(v,xQ)|< \delta \Vert v \Vert_{H^1} \implies (L_+v,v) \ge c \Vert v \Vert_{H^1}^2, \\
    |(v, \Lambda Q)|< \delta \Vert v \Vert_{H^1} \implies (L_-v,v) \ge c \Vert v \Vert_{H^1}^2.
\end{gathered} \end{equation}
\end{lem}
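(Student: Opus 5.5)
The plan is to derive both implications from the spectral structure of $L_\pm$, using the non-degeneracy of $Q$ recalled above, and then turn an $L^2$-coercivity into $H^1$-coercivity by a standard interpolation. The spectral facts I would rely on are these. $L_+$ is self-adjoint on $L^2$, with form domain $H^1$. It is a relatively compact perturbation of $-\Delta+1$: the multiplication operator $\phi_{Q^2}$ decays at infinity, and the nonlocal term $f\mapsto 2\phi_{Qf}Q$ is compact because $Q$ decays exponentially. Hence $\sigma_{\mathrm{ess}}(L_+)=[1,\infty)$. Since $Q$ minimizes $\H$ at fixed mass, $L_+$ has exactly one negative eigenvalue, and its kernel is $\mathrm{span}\{\partial_k Q\}$. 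Similarly, $L_-\ge 0$, its kernel is $\mathrm{span}\{Q\}$, and $\sigma_{\mathrm{ess}}(L_-)=[1,\infty)$.

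\textbf{Step 1 (the $L_-$ estimate).} Write $v=aQ+w$ with $w\perp Q$ in $L^2$. Then $(L_-v,v)=(L_-w,w)\ge \mu\|w\|_{L^2}^2$, where $\mu>0$ is the spectral gap of $L_-$ above $0$. By scaling, $(Q,\Lambda Q)=\tfrac12\|Q\|_{L^2}^2\neq0$, because $\Lambda$ generates the mass-preserving-up-to-factor scaling $\lambda^2Q(\lambda x)$ and $\frac{d}{d\lambda}\|\lambda^2Q(\lambda\cdot)\|_{L^2}^2=\|Q\|_{L^2}^2$ at $\lambda=1$. From $|(v,\Lambda Q)|<\delta\|v\|_{H^1}$ we get $|a|\lesssim \delta\|v\|_{H^1}+\|w\|_{L^2}$. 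So $\|v\|_{L^2}\lesssim \|w\|_{L^2}+\delta\|v\|_{H^1}$, and therefore $(L_-v,v)\gtrsim \|v\|_{L^2}^2-C\delta^2\|v\|_{H^1}^2$.

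\textbf{Step 2 (the $L_+$ estimate).} On $\{Q\}^\perp$ I would show $(L_+v,v)\ge0$, with equality only on $\ker L_+$. The argument uses $L_+(\Lambda Q)=-2Q$, which comes from differentiating the scaling family of \eqref{eq ground state}. The standard Weinstein criterion then applies: the negative direction is removed by the constraint $v\perp Q$ provided $(L_+^{-1}Q,Q)=-\tfrac12(\Lambda Q,Q)<0$, and this holds by the computation in Step 1. Adding the constraint $v\perp xQ$ handles the kernel, because $(\partial_kQ,x_jQ)=-\tfrac12\delta_{jk}\|Q\|_{L^2}^2\neq0$. A compactness argument (minimizing sequences with weak limits, using the relative compactness above) then upgrades nonnegativity to $(L_+v,v)\ge \mu\|v\|_{L^2}^2$ on $\{Q,xQ\}^\perp$. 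The almost-orthogonality hypothesis is absorbed exactly as in Step 1: decompose $v$ into its component in $\mathrm{span}\{Q,xQ\}$ plus a remainder, and note that the cross terms are $O(\delta)\|v\|_{H^1}^2$.

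\textbf{Step 3 (upgrade to $H^1$).} Because $\phi_{Q^2}$ is bounded and $\|\phi_{Qv}Q\|_{L^2}\lesssim\|v\|_{L^2}$, we have $(L_\pm v,v)\ge\|v\|_{H^1}^2-C\|v\|_{L^2}^2$. Take a convex combination $\theta\cdot(\text{this})+(1-\theta)\cdot(\text{the }L^2\text{ bound})$ with $\theta$ small. This yields $c\|v\|_{H^1}^2$ once $\delta$ is small.

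The main obstacle is Step 2: establishing the Morse index one and the sign $(L_+^{-1}Q,Q)<0$ for the nonlocal operator $L_+$. I would take the index count from the variational characterization of $Q$ together with \cite{Lenzmanngroundstate}, which is exactly the content of \cite[Lemma 4.4]{Hartree3Dmultisoliton}. In practice, then, the lemma is cited rather than reproved.
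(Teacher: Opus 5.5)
Your proposal is correct and consistent with the paper, which gives no proof of this lemma and simply recalls it from \cite[Lemma~4.4]{Hartree3Dmultisoliton}, exactly as you conclude. Your sketch is the standard argument behind that citation, and its computations check out. It uses non-degeneracy from \cite{Lenzmanngroundstate}, index one from the variational characterization, and $L_+\Lambda Q=-2Q$ with $(\Lambda Q,Q)=\frac{1}{2}\|Q\|_{L^2}^2>0$ to remove the negative direction. It then uses $(\partial_k Q,x_jQ)=-\frac{1}{2}\delta_{jk}\|Q\|_{L^2}^2$ for the kernel, followed by compactness and the $L^2$-to-$H^1$ upgrade.
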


The main purpose of this section is to prove the following quantitative version of Lemma 2.4 in \cite{KMR2bodyHartree}. In particular, it shows that when $L_\pm^{-1}$ is applied to an exponentially decaying function, the exponential decay rate does not weaken, and the weighted $L^\infty$ norm is uniform.

\begin{lem} \label{lem inverse of L+, L-}
There exist universal constants $c,C>0$ such that the following holds. If $f$ is a real-valued function with $|f(x)| \le e^{-c|x|}$, then
\begin{enumerate} [label=(\arabic*)]
    \item $(f, \nabla Q)=0 \implies \exists u$ such that $L_+u=f$ and $|u(x)| \le Ce^{-c|x|}$;
    \item $(f,Q)=0 \implies \exists u$ such that $L_-u=f$ and $|u(x)| \le Ce^{-c|x|}$.
\end{enumerate}
\end{lem}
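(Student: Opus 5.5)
We first solve in $H^1$ or $L^2$, then bootstrap to pointwise exponential decay with a uniform constant. Because $f$ is real, $L_+$ and $L_-$ are self-adjoint. Both are relatively compact perturbations of $-\Delta+1$: the potentials $\phi_{Q^2}$ and $f\mapsto 2\phi_{Qf}Q$ vanish at infinity and are compact from $H^1$ to $L^2$. So each is Fredholm of index zero on $L^2$ with domain $H^2$. By the nondegeneracy result quoted above, the orthogonality $(f,\nabla Q)=0$ (resp. $(f,Q)=0$) puts $f$ in $(\ker L_\pm)^\perp=\mathrm{Ran}\,L_\pm$. Hence there is a unique $u\in H^2$, orthogonal to the kernel, with $L_\pm u=f$ and
\[
\|u\|_{H^2}\le C\|f\|_{L^2}\le C',
\]
where $C'$ depends only on $c$ through $\|e^{-c|x|}\|_{L^2}$. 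Since $H^2\hookrightarrow L^\infty$ in three dimensions, $\|u\|_{L^\infty}$ is also bounded by a universal constant.

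Next we obtain the decay. Write the equation as
\[
(-\Delta+1)u = f - \phi_{Q^2}u - 2\phi_{Qu}Q =: g
\]
(drop the last term for $L_-$), so that $u=G*g$ with $G(x)=\frac{e^{-|x|}}{4\pi|x|}$.

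The nonlocal term is harmless. Since $|\phi_{Qu}|\le C\|u\|_{L^\infty}\|Q\|_{L^1}$ is bounded and $Q\lesssim e^{-c_Q|x|}$, the term $2\phi_{Qu}Q$ decays like $e^{-c_Q|x|}$.

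The potential term is the real difficulty. $\phi_{Q^2}$ is only $O(1/|x|)$ with a \emph{negative} sign, so $-\phi_{Q^2}u$ is not small unless $u$ is already known to decay. We handle this by moving it to the left side and treating $H=-\Delta+1+\phi_{Q^2}$ as a Schrödinger operator with a potential that vanishes at infinity. For $|x|\ge R$, where $|\phi_{Q^2}|\le 1/2$, a comparison argument applies. Set $w(x)=Me^{-c|x|}$ with $c\le \tfrac12\min(1,c_Q)$ fixed, and choose $c$ at most the decay rate of $f$. Then on $|x|>R$,
\[
(-\Delta+1+\phi_{Q^2})w\ge (1-c^2-\tfrac{2c}{|x|}-\tfrac12)w\ge \tfrac14 w.
\]
This exceeds $|f|+|2\phi_{Qu}Q|$ once $M$ is large in terms of the universal bounds. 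Taking $M\ge \|u\|_{L^\infty}e^{cR}$ also controls the boundary values $|u|$ on $|x|=R$.

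The maximum principle for $-\Delta+(1+\phi_{Q^2})$, whose zeroth-order coefficient is positive on $|x|>R$, applied to $w\mp u$ then gives $|u|\le Me^{-c|x|}$ outside $B_R$. Inside $B_R$ the bound is trivial from the $L^\infty$ estimate. Here $u$ is real; we need the elliptic maximum principle with $u\in H^2\cap C^0$ decaying at infinity, which holds because $u\in H^2$.

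The main obstacle is the uniformity of constants. The decay rate $c$ and the constant $C$ must not depend on $f$ beyond the hypothesis $|f|\le e^{-c|x|}$. There is a tension: $c$ appears both in the hypothesis and in the conclusion. We resolve it by fixing one universal $c\le \tfrac12\min(1,c_Q)$ in the statement, so the decay of $f$ is at least the rate of the barrier. The only $f$-dependence is then through $\|f\|_{L^2}$ and $\|f e^{c|x|}\|_{L^\infty}\le 1$, which yields a universal $C$. The constants $R$ and the inverse bound $\|L_\pm^{-1}\|_{(\ker)^\perp\to H^2}$ depend only on $Q$.
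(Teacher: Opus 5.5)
Your proof is correct, but it takes a different route from the paper at both stages.

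\textbf{A priori bound.} The paper imports existence from \cite{KMR2bodyHartree} and normalizes $(u,xQ)=0$. It then gets $\|u\|_{H^1}\le C$ from the coercivity Lemma~\ref{lem coercivity of L+, L-}, using the identity $(u,2Q)=-(f,\Lambda Q)$ that follows from $L_+\Lambda Q=-2Q$. You obtain existence and the bound in one step from Weyl's theorem plus nondegeneracy (closed range, spectral gap at $0$). This is more self-contained, and it also gives an $L^\infty$ bound via $H^2\hookrightarrow L^\infty$.

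\textbf{Decay.} The paper never uses a maximum principle. It writes $u=(-\Delta+1)^{-1}(f-\phi_{Q^2}u-2\phi_{Qu}Q)$ with the Yukawa kernel and runs a weighted-sup absorption argument with the truncated weight $\min\{M,e^{c|x|}\}$. It splits $\phi_{Q^2}$ into a near part, controlled by $\|u\|_{L^2}$, and a far part small enough to be absorbed. This needs only an $L^2$ bound and no regularity of $u$.

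Your barrier argument instead uses that $1+\phi_{Q^2}\ge\tfrac12$ outside a large ball, and treats $2\phi_{Qu}Q$ as a known bounded forcing. It is conceptually cleaner, but it depends on the real, scalar, local second-order structure and on a maximum principle on an exterior domain.

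Three minor points to tidy up:
\begin{enumerate}
\item $-\Delta e^{-c|x|}=(2c/|x|-c^2)e^{-c|x|}$, so the $2c/|x|$ term has the favorable sign. With your sign, the inequality $(1-c^2-\tfrac{2c}{|x|}-\tfrac12)\ge\tfrac14$ actually fails at $c=\tfrac12$. With the correct sign it holds for all $c\le\tfrac12$.
\item $\sup_x\int Q(y)|x-y|^{-1}\,dy$ is bounded by $C(\|Q\|_{L^1}+\|Q\|_{L^\infty})$, not by $\|Q\|_{L^1}$ alone.
\item Since $f$ is only bounded and measurable, $u$ lies in $W^{2,p}_{\mathrm{loc}}$ rather than $C^2$. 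The maximum principle should therefore be the weak one: test $(-\Delta+1+\phi_{Q^2})(w\mp u)\ge0$ against $(w\mp u)^-\in H^1_0(\{|x|>R\})$. This works directly and does not even need the decay of $u$ at infinity beyond $u\in H^1$.
\end{enumerate}
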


\begin{proof}
\cite[Lemma~2.4]{KMR2bodyHartree} asserts the existence of $u$. It remains to show the estimate. Let $C_0>0$ and $c_0>0$ be such that $Q(x) \le C_0 e^{-c_0|x|}$ for all $x \in \R^3$. These are universal constants. We claim that taking $c<\min\{c_0,1\}$ suffices. 

We first prove (1). Since $\nabla Q \in \ker(L_+)$, we have $L_+(u+ a\cdot \nabla Q)=f$ for any $a \in \R^3$. Note $(\nabla Q, xQ)$ is invertible. By choosing $a$ properly so that $(u+a\cdot \nabla Q, xQ)=0$, we may replace $u$ by this representative and assume $(u,xQ)=0$. Then by Lemma \ref{lem coercivity of L+, L-}, one has
\begin{equation}
    (f,u)= (L_+u, u) \ge C^{-1} \| u \|_{H^1}^2- C(u,Q)^2.
\end{equation}
Note that $(u,2Q)= -(u, L_+ (\Lambda Q))= -(f, \Lambda Q)$. We deduce 
\begin{equation}
    \|u\|_{L^2}^2 \le \| u \|_{H^1}^2 \le C(f,u)+ C(f,\Lambda Q)^2.
\end{equation}
By Cauchy--Schwarz, we get the uniform bound
\begin{equation}
    \|u\|_{L^2} \le C.
\end{equation}

The equation for $u$ can be written as
\begin{align}
    u &= (-\Delta+1)^{-1}f- (-\Delta+1)^{-1} \big( \phi_{Q^2}u+ 2\phi_{Qu} Q \big).
\end{align}
In three dimensions, the kernel of $(-\Delta+1)^{-1}$ is $\frac{e^{-|x-y|}}{4\pi |x-y|}$. Then we have
\begin{equation}
    u(x) = \int_{\R^3} \frac{e^{-|x-y|}}{4\pi |x-y|} f(y) \d y- \int_{\R^3} \frac{e^{-|x-y|}}{4\pi |x-y|} \big( \phi_{Q^2}(y) u(y)+ 2\phi_{Qu}(y) Q(y) \big) \d y.
\end{equation}
Fix any $M>0$. We want to estimate $\min \{ M, e^{c|x|} \} |u(x)|$. We first have
\begin{equation}
    e^{c|x|} \int_{\R^3} \frac{e^{-|x-y|}}{4\pi |x-y|} f(y) \d y \le \int_{\R^3} \frac{e^{-(1-c)|x-y|}}{4\pi |x-y|} \d y \le C.
\end{equation}
Take a universal constant $L>0$ such that 
\begin{equation}
    \sup_{|y| \ge L} |\phi_{Q^2}(y)| \int_{\R^3} \frac{e^{-|x|}}{4\pi |x|} \d x< \frac{1}{2}.
\end{equation}
Then using H\"older's inequality and Hardy--Littlewood--Sobolev's inequality, we have
\begin{align}
    &\ e^{c|x|} \int_{|y| \le L} \frac{e^{-|x-y|}}{4\pi |x-y|} |\phi_{Q^2}(y) u(y)| \d y + e^{c|x|} \int_{\R^3} \frac{e^{-|x-y|}}{4\pi |x-y|} |\phi_{Qu}(y) Q(y)| \d y \\
    \le &\ \int_{|y| \le L} \frac{e^{|y|} |\phi_{Q^2}(y)|}{4\pi |x-y|} |u(y)| \d y+ \int_{\R^3} \frac{e^{-(1-c)|x-y|}}{4\pi |x-y|} |\phi_{Qu}(y)| \d y \le C \|u\|_{L^2} \le C.
\end{align}
Combining these estimates, we obtain
\begin{equation}
    \min \{ M, e^{c|x|} \} |u(x)| \le C+ \frac{1}{2} \sup_{y \in \R^3} \min \{ M, e^{c|y|} \} |u(y)|.
\end{equation}
Taking supremum in $x$, we get
\begin{equation}
    \min \{ M, e^{c|x|} \} |u(x)| \le C, \quad \forall x \in \R^3
\end{equation}
with $C>0$ independent of $M$. Then letting $M \to +\infty$ gives the desired estimate.

For (2), using $Q \in \ker(L_-)$, we may choose $u$ so that $L_-u=f$ and $(u,\Lambda Q)=0$. Here, we have used the fact that $(Q, \Lambda Q) \neq 0$. Then Lemma \ref{lem coercivity of L+, L-} implies $(L_-u,u) \ge C^{-1} \|u\|_{H^1}^2$. We then deduce $\|u\|_{L^2} \le C$ similarly. The remaining proof is identical to (1).
\end{proof}

\section{Approximate two-soliton solutions} \label{sec approximate solution}

In this section, we construct approximate two-soliton solutions to the Hartree equation \eqref{eq hartree} in Proposition \ref{prop construction of approximate bubbles} and prove quantified accuracy estimates for approximate solutions in Proposition \ref{prop accuracy of approximate solution}. The strategy was introduced in \cite{KMR2bodyHartree} and was applied to multisolitons in \cite{Hartree3Dmultisoliton}; our notation is closer to that of \cite{Hartree3Dmultisoliton}. However, we make a few key modifications. First, we introduce a cutoff function in \eqref{eq definition of psi^n,L} when defining the dipole expansion. Second, we add an additional term in \eqref{eq definition of E_j tilde} and \eqref{eq definition of S_j^N} when splitting the interaction error. Third, we introduce a quantitative version of admissible functions in Definition \ref{def adm functions quant}. These modifications lead to Proposition \ref{prop no lower order error} and Proposition \ref{prop accuracy of approximate solution}, which are new ingredients of this paper as mentioned in Section \ref{sec intro}.

Let us introduce some notation. When given parameters $\alpha_1, \alpha_2, \beta_1, \beta_2, \lambda_1, \lambda_2, \gamma_1, \gamma_2$, we write
\begin{gather}
    P=(\alpha_1, \alpha_2, \beta_1, \beta_2, \lambda_1, \lambda_2), \quad g=(P, \gamma_1, \gamma_2), \\
    g_1=(\alpha_1, \beta_1, \lambda_1, \gamma_1), \quad g_2=(\alpha_2, \beta_2, \lambda_2, \gamma_2), \\
    \alpha= \alpha_2- \alpha_1, \quad \beta= \beta_2- \beta_1, \quad \lambda=(\lambda_1, \lambda_2), \quad r=|\alpha|.
\end{gather}
We use similar notation for superscripted quantities. For a function $v$, let
\begin{equation}
    g_j v(t,x)= \frac{1}{\lambda_j^2} v \Big( t, \frac{x-\alpha_j}{\lambda_j} \Big) e^{i\gamma_j+ i\beta_j \cdot x}, \quad j=1,2.
\end{equation}

Consider the $2$-soliton
\begin{equation}
    u(t,x):= u_1+ u_2:= g_1 v_1+ g_2 v_2.
\end{equation}
Then we have
\begin{equation}
    iu_t+ \Delta u -\phi_{|u|^2} u = \sum_{j=1}^2 \frac{1}{\lambda_j^2} E_j(t,y_j) e^{i\gamma_j+ i\beta_j \cdot x}- 2\phi_{\Re (u_1 \ol{u_2})}u
\end{equation}
with
\begin{equation} \begin{aligned}
    E_j(t,y_j)= &\quad i\partial_t v_j+ \lambda_j^{-2} \big( \Delta v_j- v_j- \phi_{|v_j|^2} v_j \big)- \frac{1}{\lambda_{j+1}^2} \phi_{|v_{j+1}|^2} \left( t, \frac{\lambda_j y_j}{\lambda_{j+1}}+ \frac{(-1)^j\alpha}{\lambda_{j+1}} \right) v_j \\
    &- i \frac{\dot{\alpha}_j- 2\beta_j}{\lambda_j} \nabla v_j- i \frac{\dot{\lambda}_j}{\lambda_j} \Lambda v_j- \lambda_j \dot{\beta}_j \cdot y_j v_j- \Big( \dot{\gamma}_j- \frac{1}{\lambda_j^2}+ |\beta_j|^2+ \dot{\beta}_j \cdot \alpha_j \Big) v_j,
\end{aligned} \end{equation}
where we set $y_j= \frac{x-\alpha_j}{\lambda_j}$. Note that
\begin{equation}
    \frac{1}{\lambda_{j+1}^2} \phi_{|v_{j+1}|^2} \left( t, \frac{\lambda_j y_j}{\lambda_{j+1}}+ \frac{(-1)^j \alpha}{\lambda_{j+1}} \right) = -\frac{1}{4\pi \lambda_{j+1}} \int_{\mathbb{R}^3} \frac{|v_{j+1}(t,\xi)|^2}{|\lambda_j y_j+ (-1)^j \alpha- \lambda_{j+1} \xi|} \d \xi.
\end{equation}
To construct approximate solutions, we consider the Taylor expansion 
\begin{equation}
    \frac{1}{|\alpha- \zeta|}= \sum_{n=1}^N F_n(\alpha,\zeta)+ O \left( \frac{|\zeta|^N}{|\alpha|^{N+1}} \right) \;\; \text{as} \;\; \zeta \to 0,
\end{equation}
where $F_n(\alpha,\zeta)$ is homogeneous of degree $-n$ in $\alpha$ and of degree $n-1$ in $\zeta$. We can calculate the first few terms explicitly:
\begin{equation}
    F_1(\alpha, \zeta)= \frac{1}{|\alpha|}, \quad F_2(\alpha, \zeta)= \frac{\alpha \cdot \zeta}{|\alpha|^3}, \quad F_3(\alpha, \zeta)= \frac{1}{2|\alpha|^5} \big[ 3(\alpha \cdot \zeta)^2- |\alpha|^2 |\zeta|^2 \big].
\end{equation}
As in \cite{Hartree3Dmultisoliton}, we define
\begin{equation}
    \psi_{|v_{j+1}|^2}^{(n)} (t,y_j)= -\frac{1}{4\pi \lambda_{j+1}} \int_{\R^3} |v_{j+1}(t,\xi)|^2 F_n \big( \alpha, (-1)^j (\lambda_{j+1} \xi- \lambda_j y_j) \big) \d \xi.
\end{equation}
When $v_{j+1}=Q$, we denote $\psi_{|v_{j+1}|^2}^{(n)}$ by $\psi_{Q^2,j+1}^{(n)}$ to indicate its dependence on $j$.

Note that the Taylor expansion is not useful when $\zeta=(-1)^j (\lambda_{j+1}\xi-\lambda_jy_j)$ is large compared to $|\alpha|$. In \cite{KMR2bodyHartree,Hartree3Dmultisoliton}, using the expansion for all $\zeta$ is harmless because one can first take $N$ large enough and then take $|\alpha|$ large enough, as $|\alpha(t)| \to \infty$ in the expansive case, thus absorbing the polynomial growth by the exponential decay. However, this is impossible in the elliptic case where $|\alpha|$ is bounded for a fixed trajectory. Our solution is to introduce a cutoff so that the approximation only captures small $\zeta$. In practice, it is more convenient to let the cutoff function be a function of $y_j$. Another subtle point is that we will need precise formulae for the first few terms of the approximation as in Proposition \ref{prop no lower order error}, so we do not want the cutoff to affect the first few terms. Therefore, we only apply the cutoff for $n>10$, where the choice of $10$ is quite flexible: it can be any order above which explicit expressions for the approximation are not required. 

More precisely, we fix a smooth function $\chi:\R^3 \to \R_{\ge 0}$ such that $\chi(x)=1$ if $|x| \le 1$ and $\chi(x)=0$ if $|x| \ge 2$. Then we define
\begin{equation} \label{eq definition of psi^n,L}
    \psi_{|v_{j+1}|^2}^{(n),L} (t,y_j)= \left\{
    \begin{aligned}
        &\psi_{|v_{j+1}|^2}^{(n)} (t,y_j), && n \le 10, \\
        &\psi_{|v_{j+1}|^2}^{(n)} (t,y_j) \chi(\frac{y_j}{L}), && n>10,
    \end{aligned} \right.
\end{equation}
and
\begin{equation} 
    \phi_{|v_{j+1}|^2}^{(N),L}(t,y_j) := \sum_{n=1}^N \psi_{|v_{j+1}|^2}^{(n),L}(t,y_j),
\end{equation}
where $L>0$ is a constant that will be specified later.

We shall allow $v_j$ to depend on $N$, and we also assume $v_j$ depends on time only through the parameters $t \mapsto P(t)$, which means $v_j(t,y_j)= V_j^{(N)}(P(t),y_j)$ for some $V_j^{(N)}$. Define
\begin{equation} \label{eq approximate solution} \begin{aligned}
    R_g^{(N)}(t,x):= \sum_{j=1}^2 R_{j,g}^{(N)}(t,x):= \sum_{j=1}^2 g_j V_j^{(N)}(P(t),x). 
\end{aligned} \end{equation}
Let us omit the subscript $g$ of $R^{(N)}$ for now. We have
\begin{equation} \label{eq equation of approximate solution} \begin{aligned}
    &i\partial_t R^{(N)}+ \Delta R^{(N)}- \phi_{|R^{(N)}|^2} R^{(N)} \\
    = &\quad \sum_{j=1}^2 \frac{1}{\lambda_j^2} E_j^{(N)}(t,y_j) e^{i\gamma_j+ i\beta_j \cdot x}- 2 \phi_{\Re \Big( R_1^{(N)} \ol{R_2^{(N)}} \Big)} R^{(N)} \\
    &+ \sum_{j=1}^2 \frac{1}{\lambda_j^2} \left[ \phi_{\left| V_{j+1}^{(N)} \right|^2}^{(N),L}- \frac{1}{\lambda_{j+1}^2} \phi_{|V_{j+1}^{(N)}|^2} \left( P(t), \frac{\lambda_j y_j}{\lambda_{j+1}}+ \frac{(-1)^j \alpha}{\lambda_{j+1}} \right) \right] V_j^{(N)} e^{i\gamma_j+ i\beta_j \cdot x},
\end{aligned} \end{equation} 
where $E_j^{(N)}= \tilde{E}_j^{(N)}+ S_j^{(N)}$, with $\tilde{E}_j^{(N)}$ collecting nonlinear errors and $S_j^{(N)}$ collecting modulation errors. Namely, we let
\begin{equation} \label{eq definition of E_j tilde} \begin{aligned} 
    \tilde{E}_j^{(N)}(t,y_j)= & \lambda_j^{-2} \Big( \Delta V_j^{(N)}- V_j^{(N)}- \phi_{\left| V_j^{(N)} \right|^2} V_j^{(N)} \Big)- \phi_{\left| V_{j+1}^{(N)} \right|^2}^{(N),L} V_j^{(N)} \\
    &- i \frac{M_j^{(N)}}{\lambda_j} \Lambda V_j^{(N)}- \lambda_j B_j^{(N)} \cdot y_j V_j^{(N)}- \frac{\| Q \|_{L^2}^2}{4\pi \lambda_{j+1}} \frac{1}{|\alpha|} V_j^{(N)} \\
    &+ i\Bigg[ \frac{\partial V_j^{(N)}}{\partial \alpha} \cdot 2\beta+ \frac{\partial V_j^{(N)}}{\partial \beta} \cdot \left( B_2^{(N)}- B_1^{(N)} \right)+ \sum_{k=1}^2 \frac{\partial V_j^{(N)}}{\partial \lambda_k}  M_k^{(N)} \Bigg]
\end{aligned} \end{equation} 
and
\begin{equation} \label{eq definition of S_j^N} \begin{aligned}
    S_j^{(N)}(t, y_j) = &- i \frac{\dot{\alpha}_j- 2\beta_j}{\lambda_j} \nabla V_j^{(N)}- \lambda_j \left( \dot{\beta}_j- B_j^{(N)} \right) \cdot y_j V_j^{(N)}- i\frac{\dot{\lambda}_j - M_j^{(N)}}{\lambda_j} \Lambda V_j^{(N)} \\
    &- \Big( \dot{\gamma}_j- \frac{1}{\lambda_j^2}+ |\beta_j|^2+ \dot{\beta}_j \cdot \alpha_j-\frac{\| Q \|_{L^2}^2}{4\pi \lambda_{j+1}} \frac{1}{|\alpha|} \Big) V_j^{(N)} \\
    &+ i \Bigg[ \frac{\partial V_j^{(N)}}{\partial \alpha} \cdot \left( \dot{\alpha}- 2\beta \right)+ \frac{\partial V_j^{(N)}}{\partial \beta} \cdot \left( \dot{\beta}- B_2^{(N)}+ B_1^{(N)} \right) \Bigg] \\
    &+ i \sum_{k=1}^2 \frac{\partial V_j^{(N)}}{\partial \lambda_k} \left( \dot{\lambda}_k- M_k^{(N)} \right).
\end{aligned} \end{equation}

\begin{rmk} \label{rmk interaction shift}
This error expansion is almost the same as what was done in \cite{KMR2bodyHartree, Hartree3Dmultisoliton}, except that we add an additional term $-\frac{\| Q \|_{L^2}^2}{4\pi \lambda_{j+1}} \frac{1}{|\alpha|} V_j^{(N)}$ in \eqref{eq definition of E_j tilde} and a corresponding term in \eqref{eq definition of S_j^N}. We will call this term an \textbf{interaction shift} since it modifies the equation for $\gamma_j$ when we later impose the condition $S_j^{(N)}=0$. This is reflected in both Theorem \ref{thm finite time stability} and Proposition \ref{prop finite time stability with N}. 

The interaction shift cancels the first-order interaction term, and, with the aid of spherical harmonics, leads to further higher-order cancellations. The first-order calculation in the proof of Proposition \ref{prop no lower order error}, given in Section \ref{sec ode analysis}, contains the details of this cancellation.

The interaction shift also appears in the Lagrangian formulation of the effective dynamics. In this derivation, the same quantity enters the equation for $\gamma_j$, providing an alternative interpretation of the term introduced above. Since the calculation is lengthy and plays no role in the proof, we omit the details.
\end{rmk}

We recall the following definition of admissible functions from \cite{KMR2bodyHartree, Hartree3Dmultisoliton}. These functions have good decay in both $|\alpha|$ and $x$. 

\begin{defn}[\textbf{Admissible functions}] Let $n \in \mathbb{N}_+$.

(1) Define $S_n$ to be the set of functions $\sigma: (\R^3 \setminus \{0\}) \times \R^3 \times \R_+ \times \R_+ \to \mathbb{C}$ of the form
\begin{equation}
    \sigma(\alpha, \beta, \lambda_1, \lambda_2)= \sum z_{s,t,p,q} |\alpha|^{-n-|s|} \alpha^s \beta^t \lambda_1^p \lambda_2^q,
\end{equation}
where the sum is finite, $s, t \in \N^3$, $p,q \in \mathbb{Z}$ and $z_{s,t,p,q} \in \C$.

(2) We say a function $u: (\R^3 \setminus \{0\}) \times \R^3 \times \R_+ \times \R_+ \times \R^3 \to \mathbb{C}$ is admissible of degree $n$ if it is of the form
\begin{equation}
    u(\alpha, \beta, \lambda_1, \lambda_2,x)= \sum z_{s,t,p,q} |\alpha|^{-n-|s|} \alpha^s \beta^t \lambda_1^p \lambda_2^q f_{s,t,p,q}(x),
\end{equation}
where the sum is finite, $s, t \in \N^3$, $p,q \in \mathbb{Z}$ and
\begin{equation} \label{eq estimate for admissible functions unquantified}
    z_{s,t,p,q} \in \C; \quad |\nabla^k f_{s,t,p,q}(x)| \le C_{k,s,t,p,q} e^{-c_{k,s,t,p,q}|x|}, \;\; \forall k \in \N, \; x \in \R^3.
\end{equation}

(3) We say $u: (\R^3 \setminus \{0\}) \times \R^3 \times \R_+ \times \R_+ \times \R^3 \to \mathbb{C}$ is admissible of degree $\ge n$ if $u$ is a finite sum of functions of degree $n'$ with $n' \ge n$.
\end{defn}

Approximate solutions can then be constructed as follows.
\begin{prop} \label{prop construction of approximate bubbles}
There exist $b_j^{(n)}, m_j^{(n)} \in S_n$ that are real-valued and $T_j^{(n)}$ that is admissible of degree $n$ for $n \ge 1$, $j=1,2$ such that: for any $N \ge 1$, if we let 
\begin{gather*}
    V_j^{(N)}(\alpha, \beta, \lambda_1, \lambda_2, y_j)= Q(y_j)+ \sum_{n=1}^N T_j^{(n)}(\alpha, \beta, \lambda_1, \lambda_2,y_j), \\
    M_j^{(N)}= \sum_{n=1}^N m_j^{(n)} \quad \text{and} \quad B_j^{(N)}= \sum_{n=1}^N b_j^{(n)},    
\end{gather*}
then $\tilde{E}_j^{(N)}$ defined by \eqref{eq definition of E_j tilde} is admissible of degree $\ge N+1$.
\end{prop}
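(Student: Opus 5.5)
We proceed by induction on $N$, following the scheme of \cite{KMR2bodyHartree, Hartree3Dmultisoliton}: at step $n$ we determine $T_j^{(n)}$, $m_j^{(n)}$, $b_j^{(n)}$ by solving a linear elliptic system whose right-hand side collects the degree-$n$ part of the error left from step $n-1$. The real and imaginary parts of $T_j^{(n)}$ are solved with $L_+$ and $L_-$, respectively (rescaled by $\lambda_j^{-2}$). The free parameters $b_j^{(n)}$ and $m_j^{(n)}$ are chosen to enforce the solvability conditions of Lemma~\ref{lem inverse of L+, L-}.

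\textbf{Expansion by degree.} Substitute $V_j^{(N)}=Q+\sum_{n\le N}T_j^{(n)}$ into \eqref{eq definition of E_j tilde} and sort the terms by degree. The following observations make this possible.
\begin{itemize}
\item The self-interaction $\Delta V-V-\phi_{|V|^2}V$ expands as $\Delta Q-Q-\phi_{Q^2}Q=0$, plus $-(L_+\Re T^{(n)}+iL_-\Im T^{(n)})$ at degree $n$, plus products of $T^{(k)}$ with $\sum k=n$, $k<n$.
\item $\psi^{(n),L}_{|V_{j+1}|^2}$ is homogeneous of degree $n$ in $\alpha$ and polynomial in $y_j$. It therefore multiplies exponentially decaying functions into admissible terms.
\item The cutoff $\chi(y_j/L)$ for $n>10$ preserves admissibility, since it is smooth and bounded with bounded derivatives (for fixed $L$).
\item $\partial_\alpha$ raises the degree by one. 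The terms $\partial_\alpha V\cdot 2\beta$ and $\partial_\beta V\cdot(B_2-B_1)$ are of degree $\ge n+1$ when $V$ has degree $n$, because $\beta$ carries no $\alpha$-decay while $B$ has degree $\ge1$.
\item The interaction shift $\frac{\|Q\|^2}{4\pi\lambda_{j+1}|\alpha|}V_j$ is of degree $1$ times $V$.
\end{itemize}
Thus $\tilde E_j^{(N)}=\sum_{n=1}^{N}\big[-\lambda_j^{-2}(L_+\Re T_j^{(n)}+iL_-\Im T_j^{(n)})-i\frac{m_j^{(n)}}{\lambda_j}\Lambda Q-\lambda_j b_j^{(n)}\cdot y_jQ+G_j^{(n)}\big]+(\text{degree}\ge N+1)$. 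Here $G_j^{(n)}$ is admissible of degree $n$ and depends only on $T^{(k)},m^{(k)},b^{(k)}$ with $k<n$. Because the lower-order terms are fixed once chosen, the equations at degree $\le N$ are the same for every larger $N$, so a single sequence works for all $N$.

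\textbf{Solving at degree $n$.} We require $\lambda_j^{-2}L_+\Re T_j^{(n)}=\Re G_j^{(n)}-\lambda_j b_j^{(n)}\cdot y_jQ$ and $\lambda_j^{-2}L_-\Im T_j^{(n)}=\Im G_j^{(n)}-\lambda_j^{-1}m_j^{(n)}\Lambda Q$.
\begin{itemize}
\item Since $(yQ,\nabla Q)=-\tfrac12\|Q\|^2 I$ is invertible, we choose $b_j^{(n)}$ so that the right-hand side of the first equation is orthogonal to $\nabla Q$.
\item Since $(\Lambda Q,Q)=\tfrac12\|Q\|_{L^2}^2\neq0$ in three dimensions, we choose $m_j^{(n)}$ so that the right-hand side of the second equation is orthogonal to $Q$.
\end{itemize}
Pairing $G_j^{(n)}=\sum z\,|\alpha|^{-n-|s|}\alpha^s\beta^t\lambda^{p,q}f(x)$ with $\nabla Q$ or $Q$ yields coefficients in $S_n$, which are real because we take real and imaginary parts. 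We then apply Lemma~\ref{lem inverse of L+, L-} termwise to each $f_{s,t,p,q}$, after rescaling $f$ so that $|f|\le e^{-c|x|}$, and obtain exponentially decaying preimages. Derivative bounds $|\nabla^k T|\le C e^{-c|x|}$ follow from elliptic regularity applied to $(-\Delta+1)u=f-\phi_{Q^2}u-2\phi_{Qu}Q$, iterated. This makes $T_j^{(n)}$ admissible of degree $n$ and closes the induction.

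\textbf{Main obstacle.} The delicate part is the bookkeeping that shows $G_j^{(n)}$ is genuinely admissible of degree exactly $n$. Two sources need care:
\begin{itemize}
\item The nonlocal terms $\phi_{T^{(k)}\bar T^{(l)}}T^{(m)}$ decay only polynomially, not exponentially, in $x$. We handle this by noting they always multiply an exponentially decaying factor.
\item The $\partial_{\lambda_k}V\,M_k$ and $\partial_\beta V\cdot B$ terms must be checked to land in the right degrees.
\end{itemize}
We also have to verify that differentiating $|\alpha|^{-n-|s|}\alpha^s$ in $\alpha$ stays within the admissible form, which it does, with degree $n+1$.
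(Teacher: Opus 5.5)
Your proposal is correct and follows essentially the same route as the paper, which itself only sketches the argument of \cite{KMR2bodyHartree, Hartree3Dmultisoliton}: isolate the degree-$(N+1)$ part $\hat E_j^{(N)}$ of the error, fix $b_j^{(N+1)}$ and $m_j^{(N+1)}$ by orthogonality against $\nabla Q$ and $Q$, and invert $L_\pm$ via Lemma~\ref{lem inverse of L+, L-}. One minor imprecision is harmless. Multiplying $f$ by a constant cannot improve its decay rate, but the proof of Lemma~\ref{lem inverse of L+, L-} works for any smaller rate, so your termwise application goes through, provided $b_j^{(n)}, m_j^{(n)}$ are also split termwise, as your pairing formula effectively does.
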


\begin{proof}
The proof is identical to \cite[Proposition~2.3]{KMR2bodyHartree} or \cite[Proposition~2.4]{Hartree3Dmultisoliton}, despite the additional term $- \frac{\| Q \|_{L^2}^2}{4\pi \lambda_{j+1}} \frac{1}{|\alpha|} V_j^{(N)}$ and the cutoff \eqref{eq definition of psi^n,L}. The construction can be carried out in the same way. We will only provide a sketch and explain how to determine $m_j^{(n)}, b_j^{(n)}$ and $T_j^{(n)}$.

Let $\hat{E}_j^{(N)}$ denote the terms in $\tilde{E}_j^{(N)}$ that are of degree $-N-1$ in $\alpha$. We have
\begin{equation} \begin{aligned} \label{eq E_j^N hat}
    \hat{E}_j^{(N)}= &- \lambda_j^{-2} \sum_{k+l+m=N+1} \phi_{\Re \big( T_j^{(k)} \ol{T_j^{(l)}} \big)} T_j^{(m)}- \sum_{k+l+m+n=N+1} \psi^{(n),L}_{\Re \big( T_{j+1}^{(k)} \ol{T_{j+1}^{(l)}} \big)} T_j^{(m)} \\
    &- \frac{i}{\lambda_j} \sum_{m+n=N+1} m_j^{(n)} \Lambda T_j^{(m)} - \lambda_j \sum_{m+n=N+1} b_j^{(n)} \cdot y_j T_j^{(m)}- \frac{\| Q \|_{L^2}^2}{4\pi \lambda_{j+1}} \frac{1}{|\alpha|} T_j^{(N)} \\
    &+i \frac{\partial T_j^{(N)}}{\partial \alpha} \cdot 2\beta+ i\sum_{m+n=N+1} \bigg[ \frac{\partial T_j^{(m)}}{\partial \beta} \cdot \Big( b_2^{(n)}- b_1^{(n)} \Big)+ \frac{\partial T_j^{(m)}}{\partial \lambda_1} m_1^{(n)}+ \frac{\partial T_j^{(m)}}{\partial \lambda_2} m_2^{(n)} \bigg],
\end{aligned} \end{equation}
where the indices $k,l,m$ range from $0$ to $N$, $n$ ranges from $1$ to $N$, and we set $T_j^{(0)}=Q$. 

Then we choose the $(N+1)$-th order terms to cancel $\hat{E}_j^{(N)}$. We let $b_j^{(N+1)}$, $m_j^{(N+1)}$ satisfy 
\begin{equation} \label{eq formula of m_j b_j}
    \left\{ \begin{aligned}
        &\Big( \lambda_j b_j^{(N+1)} \cdot y_j Q+ \psi_{Q^2,j+1}^{(N+1),L} Q- \Re \ \hat{E}_j^{(N)}, \nabla Q \Big)=0, \\
        &\left( m_j^{(N+1)} \Lambda Q- \lambda_j \Im \ \hat{E}_j^{(N)}, Q \right)=0.
    \end{aligned} \right.
\end{equation}
and then solve $T_j^{(N+1)}$ from
\begin{equation} \label{eq formula of T_j}
    \left\{ \begin{aligned}
        &L_+ \Re \ T_j^{(N+1)}= -\lambda_j^3 b_j^{(N+1)} \cdot y_j Q- \lambda_j^2 \psi_{Q^2,j+1}^{(N+1),L} Q+ \lambda_j^2 \Re \ \hat{E}_j^{(N)}, \\
        &L_- \Im \ T_j^{(N+1)}= -\lambda_j m_j^{(N+1)} \Lambda Q+ \lambda_j^2 \Im \ \hat{E}_j^{(N)}.
    \end{aligned} \right.
\end{equation}
Here, the orthogonality condition \eqref{eq formula of m_j b_j} ensures that \eqref{eq formula of T_j} is solvable according to Lemma \ref{lem inverse of L+, L-}. Then the proposition can be proved by induction. See \cite{Hartree3Dmultisoliton} for more details.
\end{proof}

Using \eqref{eq E_j^N hat}, \eqref{eq formula of m_j b_j} and \eqref{eq formula of T_j}, we can compute the first few terms of $B_j^{(N)}$, $M_j^{(N)}$ and $T_j^{(N)}$. These terms turn out to be unexpectedly simple.
\begin{prop} \label{prop no lower order error}
We have $T_j^{(1)}= T_j^{(2)}=0$, $b_j^{(2)}= \frac{(-1)^{j-1}\| Q \|_{L^2}^2}{4\pi \lambda_{j+1}} \frac{\alpha}{|\alpha|^3}$ and 
\begin{equation}
    b_j^{(n)}=0 \;\; \text{for} \;\; 1 \le n \le 6, \; n \neq 2; \quad m_j^{(n)}=0 \;\; \text{for} \;\; 1 \le n \le 6.
\end{equation}
\end{prop}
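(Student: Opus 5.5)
The plan is an explicit induction on the order $N+1=1,\dots,6$. At each step I evaluate $\hat{E}_j^{(N)}$ from \eqref{eq E_j^N hat}, read off $b_j^{(N+1)}, m_j^{(N+1)}$ from \eqref{eq formula of m_j b_j}, and describe $T_j^{(N+1)}$ via \eqref{eq formula of T_j}. Since $n \le 10$ throughout, the cutoff in \eqref{eq definition of psi^n,L} plays no role. Two structural facts drive everything.

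\emph{Newton's theorem for the multipoles.} Because $\zeta \mapsto |\alpha-\zeta|^{-1}$ is harmonic near $0$, each homogeneous piece $F_n(\alpha,\cdot)$ is a harmonic polynomial of degree $n-1$. Since $Q^2$ is radial, the mean value property over spheres gives
\begin{equation}
\psi^{(n)}_{Q^2,j+1}(y) = -\frac{\|Q\|_{L^2}^2}{4\pi\lambda_{j+1}} F_n\big(\alpha,(-1)^{j+1}\lambda_j y\big).
\end{equation}
This is $|y|^{n-1}$ times a spherical harmonic of degree $n-1$ in $\hat{y}$.

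\emph{Sector preservation.} Since $Q$ is radial, $L_\pm$ preserve each spherical harmonic sector $\{f(|y|)Y_\ell(\hat y)\}$; this includes the nonlocal piece $\phi_{Qf}Q$. Hence Lemma \ref{lem inverse of L+, L-} returns a solution in the same sector, possibly after adding a kernel element. The kernel element is either $\nabla Q$ (sector $1$) or $Q$ (sector $0$), so we may choose it to vanish when the data lie in a sector $\ell\ge 2$.

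\textbf{Orders 1 and 2.} For $N=0$ we have $T_j^{(0)}=Q$, so $\hat{E}_j^{(0)}$ consists only of the interaction-shift term $-\frac{\|Q\|^2}{4\pi\lambda_{j+1}|\alpha|}Q$. The right side of \eqref{eq formula of T_j} at order $1$ then contains $-\lambda_j^2(\psi^{(1)}_{Q^2,j+1}Q - \Re\hat E_j^{(0)})$. By the formula above, $\psi^{(1)}_{Q^2,j+1}=-\frac{\|Q\|^2}{4\pi\lambda_{j+1}|\alpha|}$, so this combination vanishes; I expect this is precisely the sign convention under which the shift was introduced, and I would double check it first. Therefore $b_j^{(1)}=m_j^{(1)}=0$ and $T_j^{(1)}=0$.

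At order $2$, $\hat{E}_j^{(1)}=0$ because all first-order quantities vanish. The term $\psi^{(2)}_{Q^2,j+1}$ is linear in $y$, so $b_j^{(2)}$ can be chosen with $\lambda_j b_j^{(2)}\cdot y = -\psi^{(2)}_{Q^2,j+1}(y)$ identically. This gives the stated $b_j^{(2)}$ and a zero right-hand side, so $T_j^{(2)}=0$ and $m_j^{(2)}=0$.

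\textbf{Orders 3 to 6.} At order $n\in\{3,\dots,6\}$ the new forcing $\psi^{(n)}_{Q^2,j+1}Q$ lies in sector $n-1\ge 2$. It is therefore orthogonal to $\nabla Q$ (sector $1$) and to $Q$ (sector $0$). I then check that every contribution to $\hat E_j^{(N)}$ with $N\le 5$ pairs to zero with $\nabla Q$ in its real part and with $Q$ in its imaginary part. The terms cancel or vanish as follows.
\begin{itemize}
\item[(a)] The term $-\psi^{(1)}_{Q^2,j+1}T_j^{(N)}$ cancels exactly against the shift term $-\frac{\|Q\|^2}{4\pi\lambda_{j+1}|\alpha|}T_j^{(N)}$.
\item[(b)] For $m\ge 3$, the term $-\psi^{(2)}_{Q^2,j+1}T_j^{(m)}$ cancels against $-\lambda_j b_j^{(2)}\cdot y\,T_j^{(m)}$. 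This is the only potentially dangerous sector-$1$ contribution.
\item[(c)] The remaining products, such as $\psi^{(3)}T^{(3)}$, $\phi_{QT^{(3)}}T^{(3)}$ and $\phi_{T^{(3)}T^{(3)}}Q$, are even in $y$. This holds because every nonzero $T^{(m)}$ with $m\le 5$ lies in sectors of one parity: sector $m-1$ together with the sectors produced by $\partial_\alpha$.
\item[(d)] Cross terms $\psi^{(n)}_{\Re(T_{j+1}^{(k)}Q)}$ with $k\ge 3$ involve moments of a sector-$(k-1)$ function against polynomials of degree $\le n-1$. These vanish unless $n\ge k$, which with $n+k\le 6$ leaves only $n=k=3$. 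That case yields a constant-in-$y$ multiple of $Q$, which is orthogonal to $\nabla Q$.
\item[(e)] The term $i\,\partial_\alpha T_j^{(m)}\cdot 2\beta$ is imaginary and stays in sectors $\ge 2$ by the same parity and sector bookkeeping, so it pairs to zero with $Q$ and does not affect $m_j^{(n)}$.
\end{itemize}
Any $\partial_\beta$ or $\partial_\lambda$ terms multiply vanishing $m$'s or $T$'s independent of $\beta$.

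The main obstacle is this bookkeeping at orders $5$ and $6$. I need the precise sector and parity content of $T_j^{(3)}, T_j^{(4)}, T_j^{(5)}$, including their imaginary parts generated by the $\beta$-transport term, and the moment-vanishing argument for the nonlocal cross terms. I would organize it as an inductive claim: for $3\le m\le 5$, $T_j^{(m)}$ is a finite sum of $f(|y|)Y_\ell(\hat y)$ with $\ell\ge 2$ and $\ell\equiv m-1 \pmod 2$. All the orthogonality statements then follow from this claim together with (a) and (b).
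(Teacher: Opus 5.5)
Your plan follows the paper's route: order-by-order evaluation of $\hat E_j^{(N)}$, sector preservation of $L_\pm$, the cancellations (a) and (b), evenness of the $T_j^{(3)}$-products, and vanishing of low multipoles of higher-sector densities. The gap is in the bookkeeping you single out as the main obstacle.

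\textbf{The inductive claim is false.} You claim that every component of $T_j^{(m)}$, $3\le m\le 5$, lies in a sector $\ell\equiv m-1 \pmod 2$. Since $T_j^{(3)}$ is real and lies in $\H_2$, one gets $\hat E_j^{(3)}=i\,2\beta\cdot\partial_\alpha T_j^{(3)}$, whose imaginary part lies in $\H_2$. Solving the $L_-$ equation then gives $\Im T_j^{(4)}\in\H_2$, which is nonzero whenever $\beta\neq 0$, alongside $\Re T_j^{(4)}\in\H_3$. At the next order, $\Re T_j^{(5)}\in\H_2\oplus\H_4$ and $\Im T_j^{(5)}\in\H_3$. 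The factor $i$ in the transport term swaps real and imaginary parts without changing the $y$-sector, while the order goes up by one. So the imaginary parts carry the opposite parity.

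\textbf{Two of your steps break as a result.} First, in (e), $i\,2\beta\cdot\partial_\alpha T_j^{(m)}$ is not purely imaginary for $m=4,5$. Its real part $-2\beta\cdot\partial_\alpha\Im T_j^{(m)}$ sits in $\Re\hat E_j^{(4)}$ and $\Re\hat E_j^{(5)}$. It therefore enters the $\nabla Q$-orthogonality conditions that determine $b_j^{(5)}$ and $b_j^{(6)}$, and you never test it there. Second, $T_j^{(4)}$ is not independent of $\beta$. Hence $\hat E_j^{(5)}$ contains the real term $i\,\partial_\beta T_j^{(4)}\cdot(b_2^{(2)}-b_1^{(2)})=-\partial_\beta \Im T_j^{(4)}\cdot(b_2^{(2)}-b_1^{(2)})$, which your closing remark about $\partial_\beta$ terms discards. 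In (d), $\Re T_{j+1}^{(5)}$ is also not purely in sector $4$, but only $n=1$ occurs there, so that step survives.

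\textbf{The repair.} Track real and imaginary parts separately, with the invariant that for $3\le m\le 5$ all components of $T_j^{(m)}$ lie in sectors $\ell\ge 2$, with $\Re T_j^{(m)}$ of parity $m-1$ and $\Im T_j^{(m)}$ of parity $m$. The derivatives $\partial_\alpha$ and $\partial_\beta$ act only on the coefficients and preserve the $y$-sector. So the overlooked terms lie in $\H_2$ (the ones built from $\Im T_j^{(4)}$, in both $\hat E_j^{(4)}$ and $\hat E_j^{(5)}$) or in $\H_3$ (the one built from $\Im T_j^{(5)}$, in $\hat E_j^{(5)}$). They are therefore orthogonal to both $\nabla Q$ and $Q$, and the conclusion holds. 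The parity argument in (c) is then needed only for the real function $T_j^{(3)}\in\H_2$.

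This is exactly the paper's bookkeeping. It finds that $\hat E_j^{(3)}$ and $\hat E_j^{(4)}$ consist solely of the transport term, records the sectors of $\Re T_j^{(m)}$ and $\Im T_j^{(m)}$ separately, and keeps the $\partial_\beta T_j^{(4)}$ term explicitly in $\hat E_j^{(5)}$.
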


The significance of this result will become clear in Section \ref{sec ode analysis} when we prove Proposition \ref{prop stability of 2-body}, so we postpone its proof to the next section. We would like to emphasize that the key reason this holds is the interaction shift (see Remark \ref{rmk interaction shift}).

The next step is to prove the accuracy of approximate solutions similar to \cite[Proposition~2.7]{KMR2bodyHartree} or \cite[Proposition~2.6]{Hartree3Dmultisoliton}. However, in those papers, the dependence of the constants on $N$ is not tracked, while this information is needed in this paper. For this purpose, we need to define a more quantitative version of admissible functions and show quantitative estimates for the data constructed above.

\begin{defn}[\textbf{Quantified admissible functions}] \label{def adm functions quant} Let $n \in \mathbb{N}_+$, $A>0$ and $c>0$.

(1) Define $S_n(A)$ to be the set of functions 
\begin{equation}
    \sigma: (\R^3 \setminus \{0\}) \times \R^3 \times \R_+ \times \R_+ \to \mathbb{C}
\end{equation} 
of the form
\begin{equation}
    \sigma(\alpha, \beta, \lambda_1, \lambda_2)= \sum z_{s,t,p,q} |\alpha|^{-n-|s|} \alpha^s \beta^t \lambda_1^p \lambda_2^q,
\end{equation}
where the summation is over 
\begin{equation} \label{eq summation over}
    s \in \N^3, \; |s| \le n; \quad t \in \N^3, \; |t| \le n; \quad p \in \mathbb{Z}, \; |p| \le 5n; \quad q \in \mathbb{Z}, \; |q| \le 5n,
\end{equation} 
and
\begin{equation}
    z_{s,t,p,q} \in \C, \quad
    \sum |z_{s,t,p,q}| \le A.
\end{equation}

(2) Define $\mathcal{S}_n(A,c)$ to be the set of functions 
\begin{equation}
    u: (\R^3 \setminus \{0\}) \times \R^3 \times \R_+ \times \R_+ \times \R^3 \to \mathbb{C}
\end{equation} 
of the form
\begin{equation}
    u(\alpha, \beta, \lambda_1, \lambda_2,x)= \sum z_{s,t,p,q} |\alpha|^{-n-|s|} \alpha^s \beta^t \lambda_1^p \lambda_2^q f_{s,t,p,q}(x),
\end{equation}
where the summation is over \eqref{eq summation over}, with \eqref{eq estimate for admissible functions unquantified} being satisfied, and 
\begin{equation}
    \sum |z_{s,t,p,q}| \le A, \quad  |f_{s,t,p,q}(x)| \le e^{-c|x|}, \;\; \forall x \in \R^3.
\end{equation}
\end{defn}

The following lemma gives quantitative bounds for the closure properties of admissible functions. We remark that (3) below heavily relies on \eqref{eq definition of psi^n,L}.

\begin{lem} \label{lem admissible functions}
Let $m,n \in \N_+$ and $c_1, c_2>0$. Assume $u \in \mathcal{S}_m(A,c_1)$ and $v \in \mathcal{S}_n(B,c_2)$. Then

\begin{enumerate}
    \item $uv \in \mathcal{S}_{m+n}(AB,c_1+c_2)$;
    \item $\phi_u v \in \mathcal{S}_{m+n}(CAB,c_2)$ for some $C>0$ depending only on $c_1$;
    \item If $0< \epsilon \ll 1$, $k \in \N_+$ and $k \le L$, then $\psi_u^{(k),L}v \in \mathcal{S}_{m+n+k} (C^k L^{k-1} AB, c_2- \epsilon L^{-1} \big)$ for some $C>0$ depending only on $c_1$ and $\epsilon$. 
    \item If $0< \epsilon \ll 1$, $k \in \N_+$ and $k \le L^{\frac{1}{3}}$, then $\psi_u^{(k),L}v \in \mathcal{S}_{m+n+k} (C^k L^{\frac{2(k-1)}{3}} AB, c_2- \epsilon L^{-\frac{1}{3}} \big)$ for some $C>0$ depending only on $c_1$ and $\epsilon$.
\end{enumerate}
\end{lem}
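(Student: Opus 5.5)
The plan is to work term by term. Every element of $\mathcal{S}_n(A,c)$ is a finite sum of products $z\,|\alpha|^{-n-|s|}\alpha^s\beta^t\lambda_1^p\lambda_2^q f(x)$. Each claimed operation is bilinear, so it suffices to treat a single pair of monomials, one from $u$ and one from $v$. For each pair I will show that the result is a sum of terms of the required shape, with the index ranges \eqref{eq summation over} respected. Any constant needed to normalize the new spatial factor so that $|f|\le e^{-c|x|}$ gets pushed into the coefficients $z$. Summing $|z|$ over pairs then gives the factor $AB$ times the claimed constant. The derivative bounds \eqref{eq estimate for admissible functions unquantified} are qualitative and are checked the same way each time (Leibniz rule, derivatives of $\chi(\cdot/L)$ being bounded), so I will not track them.

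For (1), the product of $|\alpha|^{-m-|s|}\alpha^s$ and $|\alpha|^{-n-|s'|}\alpha^{s'}$ is $|\alpha|^{-(m+n)-|s+s'|}\alpha^{s+s'}$. The index bounds simply add: $|s+s'|\le m+n$, $|t+t'|\le m+n$, and $|p+p'|\le 5(m+n)$, and similarly for $q$. Coefficients multiply, and $|f_1f_2|\le e^{-(c_1+c_2)|x|}$.

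For (2), by linearity $\phi_u$ is the same combination with $f$ replaced by $\phi_f$. Since $|f|\le e^{-c_1|x|}$, the Newtonian potential satisfies $|\phi_f(x)|\le C(c_1)(1+|x|)^{-1}\le C(c_1)$. Hence $|\phi_f g|\le C e^{-c_2|x|}$, and the indices add exactly as in (1).

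For (3) and (4), the key input is an explicit form of $F_k$. Write $F_k(\alpha,\zeta)=|\alpha|^{-k}|\zeta|^{k-1}P_{k-1}(\hat\alpha\cdot\hat\zeta)$ with $P_{k-1}$ the Legendre polynomial. Expanding gives
\begin{equation}
F_k(\alpha,\zeta)=\sum_{l} c_{k,l}\,|\alpha|^{-k-(k-1-2l)}(\alpha\cdot\zeta)^{k-1-2l}|\zeta|^{2l},
\end{equation}
where $\sum_l|c_{k,l}|\le C^k$. Multinomial expansion of $(\alpha\cdot\zeta)^{k-1-2l}|\zeta|^{2l}$ then produces monomials $|\alpha|^{-k-|s|}\alpha^s\zeta^r$ with $|s|,|r|\le k-1$, at a further cost of at most $C^k$ in the coefficient sum. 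Next I substitute $\zeta=\pm(\lambda_{j+1}\xi-\lambda_jy)$ and expand binomially, which costs another $2^{k}$. This gives $\lambda$-powers of size at most $k$, including the prefactor $\lambda_{j+1}^{-1}$, so $|p|,|q|$ stay within $5(m+n+k)$. It also splits each monomial into $\xi^a y^b$ with $|a|+|b|\le k-1$.

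The $\xi$-factor is integrated against $f_u$ and gives a number bounded by $\int e^{-c_1|\xi|}|\xi|^{|a|}\,d\xi\le C^{|a|}|a|!$. For the $y$-factor I need to bound $|y|^{|b|}\chi(y/L)f_v(y)$, or $|y|^{|b|}f_v(y)$ when $k\le10$, by a constant times $e^{-(c_2-\eta)|y|}$.

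In case (3) I take $\eta=\epsilon/L$. When $k>10$ the cutoff gives $|y|^{|b|}\le(2L)^{|b|}$, and no decay loss is needed. When $k\le10$ I use $|y|^{|b|}e^{-\eta|y|}\le(|b|/(e\eta))^{|b|}\le C_\epsilon L^{|b|}$, since $|b|\le9$. The $\xi$-moment satisfies $|a|!\le k^{|a|}\le L^{|a|}$ because $k\le L$. Together these give $C^kL^{k-1}$.

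In case (4) I instead take $\eta=\epsilon L^{-1/3}$ and always use the exponential loss:
\begin{equation}
|y|^{|b|}e^{-\eta|y|}\le\big(|b|L^{1/3}/(e\epsilon)\big)^{|b|}\le (C L^{2/3})^{|b|},
\end{equation}
using $|b|\le k\le L^{1/3}$. The $\xi$-moment is now bounded by $|a|!\le L^{|a|/3}\le L^{2|a|/3}$, which gives $C^kL^{2(k-1)/3}$.

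The step I expect to be the main obstacle is the bookkeeping in (3)–(4). The difficulty is obtaining the geometric bound $C^k$ on the total coefficient mass of $F_k$ after full expansion into monomials in $\alpha$, $\xi$, $y$ and the $\lambda$'s, rather than a factorial bound. I would try to get this from the Legendre coefficient bound $\sum_l |c_{k,l}|\le C^k$ combined with the crude multinomial estimate $(\sum|\text{coeffs}|)\le 3^{k}$. A second point needing care is checking that the factorial growth of the $\xi$-moments is exactly absorbed by the hypothesis $k\le L$ or $k\le L^{1/3}$.
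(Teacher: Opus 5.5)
Your proposal is correct and follows essentially the same route as the paper. You reduce to single monomials, expand $F_k$ into monomials with geometrically bounded coefficient mass, expand $(\lambda_{j+1}\xi-\lambda_j y_j)^l$ binomially, and bound the $\xi$-moments by $C^p p!$; you then handle the $y_j$-powers either with the cutoff $\chi(y_j/L)$ or by trading them against the decay loss $e^{-\epsilon L^{-1}|y_j|}$ (resp.\ $e^{-\epsilon L^{-1/3}|y_j|}$), and your Legendre-polynomial derivation of the $C^k$ bound on the coefficient mass of $F_k$ explicitly justifies a step the paper only asserts from the finite radius of convergence.
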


\begin{proof}
The proof of (1) and (2) is standard. Then we prove (3). By linearity, we may assume $u$ and $v$ both consist of one term and $A=B=1$. Namely,
\begin{equation}
    u= |\alpha|^{-m-|s_1|} \alpha^{s_1} \beta^{t_1} \lambda_1^{p_1} \lambda_2^{q_1} f_{s_1,t_1,p_1,q_1}(x), \quad v= |\alpha|^{-n-|s_2|} \alpha^{s_2} \beta^{t_2} \lambda_1^{p_2} \lambda_2^{q_2} f_{s_2,t_2,p_2,q_2}(x)
\end{equation}
with
\begin{equation}
    |f_{s_1,t_1,p_1,q_1}(x)| \le e^{-c_1|x|}, \quad |f_{s_2,t_2,p_2,q_2}(x)| \le e^{-c_2|x|}.
\end{equation}

Since the Taylor series of the power function $|\alpha-\zeta|^{-1}$ has a finite radius of convergence, there is a universal constant $C>0$ such that $|F_k(\alpha, \zeta)| \le C^k |\alpha|^{-k} |\zeta|^{k-1}$ for all $k \ge 1$. We write
\begin{equation}
    F_k(\alpha, \zeta)= \sum c_{s,l} |\alpha|^{-k-|s|} \alpha^s \zeta^l,
\end{equation}
where the summation is over $s, l \in \N^3$, $|s| \le k$, $|l|= k-1$. We have $\sum |c_{s,l}| \le C^k$. 

If $k>10$, then 
\begin{equation} \begin{aligned} \label{eq explicit psi^n_u v}
    \psi_u^{(k),L} v(y_j) &= \int  f_{s_1,t_1,p_1,q_1}(\xi) F_k \big( \alpha, (-1)^j (\lambda_{j+1} \xi- \lambda_j y_j) \big) \d\xi \cdot \chi(\frac{y_j}{L}) \\
    &\qquad \ \cdot |\alpha|^{-m-n-|s_1|-|s_2|} \alpha^{s_1+s_2} \beta^{t_1+t_2} \lambda_1^{p_1+p_2} \lambda_2^{q_1+q_2} f_{s_2,t_2,p_2,q_2}(y_j) \\
    &= \sum \int f_{s_1,t_1,p_1,q_1}(\xi)  (\lambda_{j+1} \xi- \lambda_j y_j)^l \d\xi \cdot c_{s,l} \cdot (-1)^{j(k-1)} \cdot \chi(\frac{y_j}{L}) \\
    &\qquad \quad \cdot |\alpha|^{-m-n-k-|s_1|-|s_2|-|s|} \alpha^{s_1+s_2+s} \beta^{t_1+t_2} \lambda_1^{p_1+p_2} \lambda_2^{q_1+q_2} f_{s_2,t_2,p_2,q_2}(y_j).
\end{aligned} \end{equation}
We expand $(\lambda_{j+1} \xi- \lambda_j y_j)^l$ in the integrand. For any $0 \le p \le |l|=k-1$, we have 
\begin{equation}
    \int |f_{s_1,t_1,p_1,q_1}(\xi)| |\xi|^p \d\xi \le C^p p!, \quad C=C(c_1)>0,
\end{equation}
and $|y_j|^{k-p-1} \chi(\frac{y_j}{L}) \le (2L)^{k-p-1}$. It follows that $\psi_u^{(k),L} v \in \mathcal{S}_{m+n+k}(\tilde{A},c_2)$, where
\begin{equation}
    \tilde{A} \le C^k \sum_{p=0}^{k-1} p! L^{k-p-1} \le C^k (k+L)^{k-1}.
\end{equation}
Then we obtain (3) since $k \le L$.

If $k \le 10$, then there is no cutoff term $\chi(\frac{y_j}{L})$ in \eqref{eq explicit psi^n_u v}. By pairing $|y_j|^{k-p-1}$ with $e^{-\epsilon L^{-1} |y_j|}$, we deduce that $\psi_u^{(k),L} v \in \mathcal{S}_{m+n+k}(\tilde{B},c_2- \epsilon L^{-1})$, where
\begin{equation}
    \tilde{B} \le C^k \sum_{p=0}^{k-1} p! \sup \big( |y_j|^{k-p-1} e^{-\epsilon L^{-1} |y_j|} \big) \le C^k \sum_{p=0}^{k-1} p! L^{k-p-1} \sup \big( (1+|y_j|)^{10} e^{-|y_j|} \big).
\end{equation}
This proves (3) in the case $k \le 10$ as well. Thus (3) is proved.

The proof of (4) is very similar. We proceed until \eqref{eq explicit psi^n_u v} but ignore $\chi(\frac{y_j}{L})$ anyway. This time, we pair $|y_j|^{k-p-1}$ with $e^{-\epsilon L^{-\frac{1}{3}} |y_j|}$. It follows that $\psi_u^{(k),L} v \in \mathcal{S}_{m+n+k}(\tilde{C},c_2- \epsilon L^{-\frac{1}{3}})$, where
\begin{align}
    \tilde{C} &\le C^k \sum_{p=0}^{k-1} p! \sup \big( |y_j|^{k-p-1} e^{-\epsilon L^{-\frac{1}{3}} |y_j|} \big) \le C^k \sum_{p=0}^{k-1} p! L^{\frac{k-p-1}{3}} \sup \big( |y_j|^{k-p-1} e^{-|y_j|} \big) \\
    &\le C^k \sum_{p=0}^{k-1} p! L^{\frac{2(k-p-1)}{3}} \le C^k L^{\frac{2(k-1)}{3}},
\end{align}
where we have used $k \le L^{\frac{1}{3}}$ in the second line. This concludes the proof.
\end{proof}

We next show $b_j^{(n)}, m_j^{(n)}$ and $T_j^{(n)}$ are elements of the quantified admissible function spaces. We can also describe their size in terms of $n$.

\begin{lem} \label{lem quantitative description of b_j m_j T_j}
Let $b_j^{(n)}, m_j^{(n)}$ and $T_j^{(n)}$ be as defined in Proposition \ref{prop construction of approximate bubbles}. There are universal constants $C,c>0$ such that for any $L>1$, one has 
\begin{equation}
    b_j^{(n)}, m_j^{(n)} \in S_n(C^n L^{n-1}), \quad T_j^{(n)} \in \mathcal{S}_n( C^n L^{n-1},c), \quad \forall 1 \le n \le [L], \; j=1,2.
\end{equation}
\end{lem}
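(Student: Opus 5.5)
We argue by strong induction on $n$, following the recursive construction in the proof of Proposition \ref{prop construction of approximate bubbles}. At step $N+1 \le [L]$ we bound $\hat{E}_j^{(N)}$ from \eqref{eq E_j^N hat} in a quantified class. We then obtain $b_j^{(N+1)}, m_j^{(N+1)}$ from \eqref{eq formula of m_j b_j} and $T_j^{(N+1)}$ from \eqref{eq formula of T_j}, using Lemma \ref{lem inverse of L+, L-}. The constant $C$ must be fixed once and not grow with $n$. We therefore prove the slightly stronger, normalized hypothesis
\begin{equation}
b_j^{(n)}, m_j^{(n)} \in S_n\big(a_n C_*^{n} L^{n-1}\big), \qquad T_j^{(n)} \in \mathcal{S}_n\big(a_n C_*^{n} L^{n-1}, c\big),
\end{equation}
where $a_n$ is a Catalan-type sequence with $a_n \le 4^n$. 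This is the standard trick for absorbing the combinatorial convolution sums. The decay rate $c$ is fixed as the universal rate of Lemma \ref{lem inverse of L+, L-}.

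\textbf{Bounding the terms of $\hat{E}_j^{(N)}$.} For the cubic self-interaction term $\phi_{\Re(T^{(k)}\ol{T^{(l)}})}T^{(m)}$ with $k+l+m=N+1$, Lemma \ref{lem admissible functions}(1),(2) gives membership in $\mathcal{S}_{N+1}$ with constant $C a_k a_l a_m C_*^{N+1} L^{N-2}$, still with decay rate $c$. The interaction terms $\psi^{(n),L}_{\Re(T^{(k)}\ol{T^{(l)}})} T^{(m)}$ are handled by Lemma \ref{lem admissible functions}(3). Since $n \le N+1 \le L$, the constant is $C^n L^{n-1} \cdot C a_k a_l a_m C_*^{k+l+m} L^{k+l+m-2}$. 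This is where the factor $L^{n-1}$ enters: the exponents of $L$ add to at most $N$, which matches $L^{(N+1)-1}$.

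The modulation terms $m^{(n)}\Lambda T^{(m)}$ and $b^{(n)}\cdot y T^{(m)}$ are treated directly. Multiplication by $y_j$ and application of $\Lambda$ cost only a fixed constant, at the price of a slight loss in the decay rate. The term $|\alpha|^{-1}T^{(N)}$ is immediate. The derivative terms $\partial_\alpha T^{(N)}\cdot 2\beta$, $\partial_\beta T^{(m)}$ and $\partial_{\lambda_k}T^{(m)}$ act on monomials $|\alpha|^{-n-|s|}\alpha^s\beta^t\lambda^p$. Each differentiation multiplies the coefficient sum by at most $O(n)$ and raises the degree by one (for $\partial_\alpha$), or shifts the index ranges within \eqref{eq summation over}.

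\textbf{Main obstacle.} I expect two losses to be the real difficulty: the factor $O(n)$ from the derivatives, and the erosion of the decay rate $c$. The factor $O(n) \le O(L)$ is affordable because we only need $L^{n-1}$ with $n \le L$. Whenever a derivative appears, the lower-order factor carries at most $L^{N-1}$, so multiplying by $N \le L$ still gives at most $L^{N}$. The decay rate, however, must not degrade at each step.

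The key point is that the decay rate is reset at every step. After inverting $L_\pm$, Lemma \ref{lem inverse of L+, L-} returns a function bounded by $Ce^{-c|x|}$ with universal $c$, provided the right-hand side is bounded by $e^{-c'|x|}$ with a fixed $c' \ge c$. To arrange this, we take the universal $c$ in the lemma small relative to the decay of $Q$. The cubic terms then have rate $3c$ or better, and the modulation terms, which are built from $Q$ and $T$, have rate at least $c - \epsilon L^{-1} \ge c/2$ after a harmless rescaling. Lemma \ref{lem inverse of L+, L-} allows us to re-normalize the rate by applying it with $f/\|f e^{c|x|/2}\|_\infty$. Since it is stated for rate $c$, we apply it with the rate halved once and for all, rather than accumulating losses.

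\textbf{Coefficients and closing the induction.} The coefficients $b^{(N+1)}, m^{(N+1)}$ are obtained by pairing with $\nabla Q$ and $Q$, and the pairings are divided by the fixed nonzero numbers $(y Q,\nabla Q)$ and $(\Lambda Q, Q)$. These give membership in $S_{N+1}$ with the same constant up to a universal factor. Summing the convolutions with $\sum_{k+l+m=N+1} a_k a_l a_m \lesssim a_{N+1}$ and choosing $C_*$ large closes the induction. The base case $n=1$ is direct, since $\hat{E}^{(0)}$ involves only $\psi^{(1)}$ and $Q$.
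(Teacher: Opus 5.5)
Your induction does not close as written. The step that fails is the decay-rate bookkeeping.

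\textbf{The decay rate is never reset.} Lemma~\ref{lem inverse of L+, L-} maps a source of rate $c$ to a solution of the same rate $c$. Inverting $L_\pm$ (essentially $(-\Delta+1)^{-1}$ plus a localized perturbation) cannot make the solution decay faster than the source. But $\hat{E}_j^{(N)}$ necessarily decays more slowly than the $T^{(m)}$ it is built from, because several of its terms carry polynomial weights:
\begin{itemize}
\item $b_j^{(n)}\cdot y_jT_j^{(m)}$;
\item $m_j^{(n)}\Lambda T_j^{(m)}$;
\item $\psi^{(k),L}_{u}T^{(m)}$ for $k\le 10$, where $\psi^{(k)}$ is an uncut polynomial of degree $k-1$ in $y_j$.
\end{itemize}
Choosing $c$ small relative to the decay of $Q$ only helps terms whose last factor is $Q$. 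The cubic terms also do not decay at rate $3c$: $\phi_u$ is a Newtonian potential decaying like $|x|^{-1}$, and Lemma~\ref{lem admissible functions}(2) only gives the rate of $v$.

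So with a fixed rate $c$ in your hypothesis, $T^{(N+1)}$ comes out with a rate strictly below $c$. Applying the lemma ``at rate $c/2$ once and for all'' just produces $T^{(N+1)}$ at rate $c/2$. The next step would then need inputs of rate above $c/2$ to return $c/2$, and iterating gives $c\,2^{-n}$.

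The missing idea is a rate that degrades by $O(1/L)$ per step. The paper proves $T_j^{(n)}\in\mathcal{S}_n(A_n,c_0-c_0n/(2L))$, paying $c_0/(2L)$ per step via Lemma~\ref{lem admissible functions}(3) with $\epsilon=c_0/2$. For $n\le[L]$ the rate then never drops below $c_0/2$. This, not only $O(n)\le O(L)$, is why the lemma is restricted to $n\le[L]$.

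It also corrects your accounting of the modulation terms. With only an $O(1/L)$ loss available, $\sup_y|y|e^{-\epsilon|y|/L}\sim L/\epsilon$, so multiplying by $y_j$ costs $O(L)$, not a fixed constant. This is still affordable: $b^{(n)}$ and $T^{(m)}$ with $m+n=N+1$ together carry $L^{N-1}$. It is the $L\,A_mA_n$ term in the paper's recurrence.

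\textbf{The final absorption is not justified either.} Consider the terms whose indices sum exactly to $N+1$:
\begin{itemize}
\item $m_j^{(n)}\Lambda T_j^{(m)}$ and $\partial_\beta T_j^{(m)}\cdot b^{(n)}$ with $m+n=N+1$;
\item $\phi_{\Re(Q\overline{T^{(l)}})}T^{(m)}$ with $l+m=N+1$.
\end{itemize}
For these, the hypothesis $a_nC_*^nL^{n-1}$ produces exactly $C_*^{N+1}$ on both sides. Taking $C_*$ large therefore cannot absorb the universal constants, nor the constant hidden in $\sum a_ka_la_m\lesssim a_{N+1}$ for a fixed Catalan-type sequence.

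You need a spare small factor, for example the hypothesis $a_nC_*^{n-1}L^{n-1}$. Alternatively, follow the paper: define the majorant $A_n$ by the exact recurrence and normalize $B_n=C_0^{-n}L^{-(n-1)_+}A_n$. This gives $B_{N+1}\le 2C_0\sum_{k+l+m\le N+1}B_kB_lB_m$, and the appendix shows such sequences satisfy $B_n\le C^n$ via Lagrange inversion.
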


\begin{proof}
We take $c_0$ to be the constant in Lemma \ref{lem inverse of L+, L-}. By the proof of Lemma \ref{lem inverse of L+, L-}, we know that there is $A_0>0$ such that $Q(x) \le A_0 e^{-c_0|x|}$.

We first prove by induction that there exists $A_n>0$ for $1 \le n \le [L]$ such that 
\begin{equation} \label{eq estimate with A_n}
    m_j^{(n)}, b_j^{(n)} \in S_n(A_n), \quad T_j^{(n)} \in \mathcal{S}_n(A_n,c_0- c_0(2L)^{-1}n), \quad \forall 1 \le n \le [L], \; j=1,2.
\end{equation}
Suppose \eqref{eq estimate with A_n} holds for $1 \le n \le N$, where $N \le [L]-1$. Recall $\hat{E}_j^{(N)}$ is defined by \eqref{eq E_j^N hat}. By Lemma \ref{lem admissible functions} and \eqref{eq summation over}, we have $\hat{E}_j^{(N)} \in \mathcal{S}_{N+1}(\tilde{A}_{N+1},c_0- c_0(2L)^{-1}(N+1))$, where
\begin{align}
    \tilde{A}_{N+1} &= C_0 \sum_{\substack{k+l+m=N+1 \\ 0 \le k,l,m \le N}} A_k A_l A_m+ \sum_{\substack{k+l+m+n=N+1 \\ 0 \le k,l,m \le N,\ 1 \le n \le N+1}} C_0^n L^{n-1} A_k A_l A_m \\
    &\quad + C_0 \sum_{\substack{m+n=N+1 \\ 1 \le m,n \le N}} m A_m A_n,
\end{align}
and $C_0>0$ is a universal constant large enough. In particular, we have used $N+1 \le [L]$ and applied Lemma \ref{lem admissible functions} (3) with $c_2= c_0- c_0(2L)^{-1}N$, $\epsilon= c_0/2$. We combine the first two sums in $\tilde{A}_{N+1}$ and define a more convenient quantity 
\begin{equation} \label{eq recurrence of A_n}
    A_{N+1}= C_0 \sum_{\substack{k+l+m+n=N+1 \\ 0 \le k,l,m \le N,\ 0 \le n \le N+1}} C_0^n L^{(n-1)_+} A_k A_l A_m+ C_0 \sum_{\substack{m+n=N+1 \\ 1 \le m,n \le N}} L A_m A_n,
\end{equation} 
where $(n-1)_+= \max \{n-1,0\}$. Since $A_{N+1} \ge \tilde{A}_{N+1}$, we still have 
\begin{equation}
    \hat{E}_j^{(N)} \in \mathcal{S}_{N+1}(A_{N+1},c_0- c_0(2L)^{-1}(N+1)).
\end{equation} 
Then using \eqref{eq formula of m_j b_j}, \eqref{eq formula of T_j} and Lemma \ref{lem inverse of L+, L-}, we obtain \eqref{eq estimate with A_n} for $n=N+1$, with possibly a larger $C_0$ in \eqref{eq recurrence of A_n}. Note that \eqref{eq summation over} can be checked by counting the additional powers of $\alpha, \beta, \lambda_j$.

Therefore, we have proved \eqref{eq estimate with A_n} with $\{A_n\}$ defined by \eqref{eq recurrence of A_n}. Since $c_0- c_0(2L)^{-1}n \ge \frac{c_0}{2}$, we may take $c= \frac{c_0}{2}$ and it remains to prove 
\begin{equation}
    A_n \le C^n L^{n-1}, \quad \forall 1 \le n \le [L].
\end{equation}

Let $B_n= C_0^{-n} L^{-(n-1)_+} A_n$ for $n \ge 0$. In the first summation in \eqref{eq recurrence of A_n}, since $k,l,m,n$ cannot all be $0$, one has $(k-1)_+ +(l-1)_+ +(m-1)_+ +(n-1)_+ \le N$. Therefore, we obtain
\begin{equation}
    A_{N+1} \le C_0^{N+2} L^N \sum_{\substack{k+l+m+n=N+1 \\ 0 \le k,l,m \le N,\ 0 \le n \le N+1}} B_k B_l B_m+ C_0^{N+2} L^N \sum_{\substack{m+n=N+1 \\ 1 \le m,n \le N}} B_m B_n,
\end{equation}
and thus
\begin{equation}
    B_{N+1} \le 2C_0 \sum_{\substack{k+l+m \le N+1 \\ 0 \le k,l,m \le N}} B_k B_l B_m.
\end{equation}
We prove in Appendix \ref{appendix sequence lemma} that this implies $B_n \le C^n$, which completes the proof.
\end{proof}

\begin{rmk}
In the induction step of the above proof, if we use Lemma \ref{lem admissible functions} (4) instead of (3), then we can prove
\begin{equation} \label{eq rough estimate of data, adjusted}
    b_j^{(n)}, m_j^{(n)} \in S_n(C^n L^{\frac{2(n-1)}{3}}), \quad T_j^{(n)} \in \mathcal{S}_n( C^n L^{\frac{2(n-1)}{3}},c), \quad \forall 1 \le n \le [L^{\frac{1}{3}}], \; j=1,2.
\end{equation}
\end{rmk} 

We conclude this section with the following estimates for the accuracy of the approximate solution. It is important that the constants $c_0, C_0$ do not depend on $N$.

\begin{prop} \label{prop accuracy of approximate solution}
For $K>1$, there exist $\bar{r}_0>0$, $\eta>0$ and $c_0, C_0>0$ such that the following is true. Assume $r_0>\bar{r}_0$, $N \in \N_+$, $\frac{1}{2} \eta r_0 \le N \le L \le \eta r_0$ and $P(t)$ satisfies
\begin{equation} \label{eq rough estimate on parameters}
    |\alpha| \ge r_0, \quad |\beta| \le K, \quad K^{-1} \le \lambda_j \le K.
\end{equation}

\begin{enumerate}
    \item Let $V_j^{(N)}$, $M_j^{(N)}$ and $B_j^{(N)}$ be as in Proposition \ref{prop construction of approximate bubbles}. Then
    \begin{equation} \label{eq estimate of B_j, M_j}
        |B_j^{(N)}-b_j^{(2)}| \le C_0 r_0^{-7}, \quad |M_j^{(N)}| \le C_0 r_0^{-7},
    \end{equation}
    \begin{equation} \label{eq estimate of V_j}
        |V_j^{(N)}- Q| \le C_0 r_0^{-3} e^{-c_0 |y_j|}.
    \end{equation}
    \item Let $R_g^{(N)}$ be defined by \eqref{eq approximate solution}, and 
    \begin{equation} \label{eq definition of Psi}
        \Psi^{(N)}= i\partial_t R_g^{(N)}+ \Delta R_g^{(N)}- \phi_{|R_g^{(N)}|^2} R_g^{(N)}- \sum_{j=1}^2 \frac{1}{\lambda_j^2} S_j^{(N)} e^{i\gamma_j+ i\beta_j \cdot x}.     
    \end{equation}
    Then
    \begin{equation} \label{eq estimate of Psi}
        |\Psi^{(N)}(t,x)| \le C_0 e^{-c_0 r_0} \max \limits_j e^{-c_0 |x-\alpha_j|}.
    \end{equation} 
\end{enumerate}
\end{prop}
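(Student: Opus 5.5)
The plan is to combine the quantitative bounds of Lemma \ref{lem quantitative description of b_j m_j T_j} with the lower-order vanishing of Proposition \ref{prop no lower order error}. Then the error $\Psi^{(N)}$ is split into three pieces, and each is shown to be exponentially small in $r_0$ once $N \sim L \sim \eta r_0$ with $\eta$ small.

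\textbf{Step 1: part (1).} Since $b_j^{(n)}=m_j^{(n)}=0$ for $n\le 6$ except $b_j^{(2)}$, we have
\[
B_j^{(N)}-b_j^{(2)}=\sum_{n=7}^N b_j^{(n)}, \qquad M_j^{(N)}=\sum_{n=7}^N m_j^{(n)}.
\]
By Lemma \ref{lem quantitative description of b_j m_j T_j}, $b_j^{(n)}\in S_n(C^nL^{n-1})$. Using \eqref{eq rough estimate on parameters}, each monomial $|\alpha|^{-n-|s|}\alpha^s\beta^t\lambda_1^p\lambda_2^q$ is bounded by $K^{12n}|\alpha|^{-n}$. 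Hence
\[
|b_j^{(n)}|\le (CK^{12})^n L^{n-1} r_0^{-n} \le r_0^{-1}(CK^{12}\eta)^n .
\]
Taking $\eta$ small so that $CK^{12}\eta\le 1/2$, the sum over $n\ge7$ is a geometric series dominated by its first term. This first term is $\lesssim L^{6}r_0^{-7}$; as a crude bound that is only $\lesssim \eta^6 r_0^{-1}$, which is not enough. So I will instead write the $n$-th term as $(C K^{12})^n (L/r_0)^{n-7} L^{6} r_0^{-7}$.

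This raises a worry: $L^6 r_0^{-7}$ is not $O(r_0^{-7})$. The fix I plan is to use the refined estimate \eqref{eq rough estimate of data, adjusted} together with the observation that the cutoff only enters for $n>10$. For $7\le n\le 10$, the coefficients $b_j^{(n)}$ are built from uncut $\psi^{(k)}$ with $k\le 10$, so their constants are $O(1)$ independent of $L$; these terms are then $O(r_0^{-n})=O(r_0^{-7})$. For $n\ge 11$, the $L$-dependence is $L^{n-1}\le (\eta r_0)^{n-1}$, giving terms $\lesssim r_0^{-1}(C\eta)^{n}$. These sum to $\lesssim r_0^{-1}(C\eta)^{11}$, which is still not $r_0^{-7}$. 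So for the tail I will use the dichotomy more cleverly: split at $n\le L^{1/3}$, where I use \eqref{eq rough estimate of data, adjusted}, giving terms $C^n L^{2(n-1)/3} r_0^{-n}\lesssim C^n r_0^{-(n+2)/3}$, and treat the rest separately.

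Honestly, I expect the intended mechanism is that the constants are $L$-independent for bounded $n$ (the induction constant $A_n$ only picks up $L$ from cut-off terms with $n>10$). For large $n$ one can take $n$-dependent gains $r_0^{-n}L^{n-1}$ with $n$ large, which are $\le r_0^{-1}(C\eta)^{n}$. These are $\le r_0^{-7}$ as soon as $(C\eta)^n\le r_0^{-6}$, which fails for moderate $n$. I will therefore track $A_n$ more carefully, showing $A_n\le C^n(1+L)^{(n-11)_+}$, which gives $r_0^{-11}(C\eta)^{n-11}$ for the tail. The same argument gives \eqref{eq estimate of V_j} from $T_j^{(1)}=T_j^{(2)}=0$ and $T_j^{(n)}\in\mathcal S_n(\cdot,c)$, with $c_0=c$.

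\textbf{Step 2: decomposition of $\Psi^{(N)}$.} By \eqref{eq equation of approximate solution},
\[
\Psi^{(N)}=\sum_j \lambda_j^{-2}\tilde E_j^{(N)}e^{i\gamma_j+i\beta_j\cdot x}-2\phi_{\Re(R_1\ol{R_2})}R+\text{(multipole remainder)}.
\]
\emph{(a)} $\tilde E_j^{(N)}$ is admissible of degree $\ge N+1$. The quantitative version of the induction shows it lies in $\sum_{n=N+1}^{3N+1}\mathcal S_n(C^nL^{n-1},c)$, so it is bounded by $\sum_n (C\eta)^n r_0^{-1}e^{-c|y_j|}\lesssim (C\eta)^{N}e^{-c|y_j|}\le e^{-c' r_0}$, since $N\ge \eta r_0/2$.
\emph{(b)} The overlap term $\phi_{\Re(R_1\ol R_2)}R$ is $O(e^{-cr_0})$, because $R_1\ol R_2$ is exponentially small given $|\alpha|\ge r_0$.
\emph{(c)} The multipole remainder is the main obstacle. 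In the region $|y_j|\le L$, the Taylor tail of $|\alpha-\zeta|^{-1}$ is handled by splitting the $\xi$-integral. Where $|\zeta|\le r_0/2$, the tail is bounded by $(2|\zeta|/r_0)^{N}\le (C(L+|\xi|)/r_0)^N$, and this is small using $L\le\eta r_0$ and the decay in $\xi$. Where $|\xi|\gtrsim r_0$, the decay of $|V_{j+1}|^2$ gives $e^{-cr_0}$. For $L\le|y_j|$, the cut terms $n>10$ vanish for $|y_j|\ge 2L$ and are bounded otherwise. The remaining potentials there are polynomially bounded by $\langle y_j\rangle^{10}$ and are multiplied by $V_j\lesssim e^{-c|y_j|}\le e^{-cL/2}e^{-c|y_j|/2}\le e^{-c\eta r_0/4}e^{-c|y_j|/2}$. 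Converting weights $e^{-c|y_j|}$ into $e^{-c_0|x-\alpha_j|}$ uses $K^{-1}\le\lambda_j\le K$.
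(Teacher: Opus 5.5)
\textbf{Part (2).} This part is fine and is essentially the paper's argument. The only difference is organizational. You split into $|y_j|\le L$ and $|y_j|\ge L$ before treating the multipole remainder. The paper instead bounds the Taylor tail and the $\big(1-\chi(y_j/L)\big)$ terms globally, with a growth $e^{\frac12 c_1|y_j|}$ that $V_j^{(N)}$ then absorbs. One harmless slip: the top degree in $\tilde E_j^{(N)}$ is $4N$, not $3N+1$. This comes from the terms $\psi^{(n),L}_{\Re(T^{(k)}_{j+1}\ol{T^{(l)}_{j+1}})}T^{(m)}_j$ with $n,k,l,m\le N$.

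\textbf{The gap is in Part (1).} Your final plan rests on the bound $A_n\le C^n(1+L)^{(n-11)_+}$, which you assert but do not prove. It does not follow from Lemma \ref{lem quantitative description of b_j m_j T_j}, whose induction gives only $L^{n-1}$ (or $L^{2(n-1)/3}$ for $n\le L^{1/3}$). With the decay rates $c_0-c_0(2L)^{-1}n$ used there, the cut term $\psi^{(k),L}_{Q^2,j+1}Q$ alone is controlled through $|y_j|^{k-1}\chi(y_j/L)\le (2L)^{k-1}$. Improving this to $L^{k-11}$ uniformly along the $\sim\eta r_0$ induction steps needs a genuinely new argument, for instance exploiting extra decay of the $L$-independent low-order profiles. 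Your proposal does not supply one. So, as written, neither $|B_j^{(N)}-b_j^{(2)}|+|M_j^{(N)}|\le C_0r_0^{-7}$ nor $|V_j^{(N)}-Q|\le C_0r_0^{-3}e^{-c_0|y_j|}$ is established.

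\textbf{The fix.} You had the right route in hand and dropped it. Split $\sum_{n\ge 7}$ into three ranges.
\begin{enumerate}
\item For $7\le n\le 20$, the coefficients are bounded uniformly in $L$. This holds for any fixed bounded range of $n$, not only $n\le 10$: since $0\le\chi\le 1$, you can discard the cutoff at the cost of a fixed loss of decay over finitely many steps. This range contributes $O(r_0^{-7})$.
\item For $21\le n\le L^{1/3}$, \eqref{eq rough estimate of data, adjusted} gives terms at most $C^n\eta^{2(n-1)/3}r_0^{-(n+2)/3}\le C^n\eta^{2(n-1)/3}r_0^{-7}$. These sum to $O(r_0^{-7})$ for $\eta$ small.
\item For $L^{1/3}<n\le N$, Lemma \ref{lem quantitative description of b_j m_j T_j} gives terms at most $(C\eta)^{n-1}r_0^{-1}$. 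Their sum is $O\big((C\eta)^{L^{1/3}}\big)$, which is $O(e^{-L^{1/3}})$ once $C\eta\le e^{-1}$. Since $L\ge\frac12\eta r_0$, this is smaller than any power of $r_0$.
\end{enumerate}
This answers your worry that $(C\eta)^n\le r_0^{-6}$ fails for moderate $n$. The moderate range is exactly what the $L^{1/3}$ estimate covers, and beyond $L^{1/3}$ the exponent $n$ is itself a power of $r_0$. The same three-range split, starting at $n=3$ and using $T_j^{(1)}=T_j^{(2)}=0$, gives the estimate for $V_j^{(N)}-Q$.
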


\begin{rmk} \label{rmk range of N, L}
We give a heuristic explanation of the range of $N$ and $L$.

The Taylor expansion of $\frac{1}{|\alpha- \zeta|}$ is valid if $|\zeta|< |\alpha|$. In \eqref{eq definition of psi^n,L}, we roughly have $|y_j| \sim |\zeta|$, so the cutoff threshold $L$ has size $r_0$.

From the last term in the expression of $\tilde{A}_{N+1}$ in the proof of Lemma \ref{lem quantitative description of b_j m_j T_j}, we see $A_n$ grows at least like $n!$, meaning that $|b_j^{(n)}| \le n! r_0^{-n}$ is the best bound one may expect for all $n$. By Stirling's formula, the minimum of $n! r_0^{-n}$ is attained near $n \sim r_0$, so the approximation order $N$ also has size $r_0$.
\end{rmk}

\begin{proof}
In the proof, we let $c_0, C_0$ denote positive constants depending only on $K$, where $c_0$ is small and $C_0$ is large. These constants may change from line to line. Recall we set $r=|\alpha|$.

(1) By ignoring $L$ completely and simply using $m_j^{(n)} \in S_n$, we have $\sum_{n=7}^{20} |m_j^{(n)}| \le C_0 r^{-7}$. Recall that from the definition of $S_n(A)$, we have
\begin{equation}
    \sigma \in S_n(A) \implies |\sigma(\alpha, \beta, \lambda_1, \lambda_2)| \le C_0^n A r^{-n} \text{ when \eqref{eq rough estimate on parameters} holds}.
\end{equation}
Then, using \eqref{eq rough estimate of data, adjusted} and $L \le \eta r_0$ with $\eta$ small enough, we get
\begin{equation}
    \sum_{21 \le n \le L^{1/3}} |m_j^{(n)}| \le \sum_{21 \le n \le L^{1/3}} C_0^n L^{\frac{2n}{3}} r^{-n} \le \sum_{21 \le n \le L^{1/3}} (C_0 \eta)^n  r^{-7} \le C_0 r^{-7}.
\end{equation}
By Lemma \ref{lem quantitative description of b_j m_j T_j} and taking $\eta$ small enough as well, we have
\begin{align}
    \sum_{L^{1/3} < n \le N} |m_j^{(n)}| \le \sum_{L^{1/3} < n \le N} C_0^n L^n r^{-n} \le (C_0 \eta)^{L^{\frac{1}{3}}} \le  e^{-L^{\frac{1}{3}}}.
\end{align}
Then we get $|M_j^{(N)}| \le C_0 r_0^{-7}$ by adding up these inequalities and using Proposition \ref{prop no lower order error}. The estimates for $B_j^{(N)}$ and $V_j^{(N)}$ are proved similarly.

(2) By \eqref{eq equation of approximate solution}, we have (omitting the superscript $N$ and the subscript $g$ for simplicity)
\begin{equation} \begin{aligned}
    \Psi &= \sum_{j=1}^2 \frac{1}{\lambda_j^2} \tilde{E}_j(t,y_j) e^{i\gamma_j+ i\beta_j \cdot x}+ 2\phi_{\Re (R_1 \ol{R_2})} R \\
    &\quad +\sum_{j=1}^2 \frac{1}{\lambda_j^2} \left[ \phi_{\left| V_{j+1} \right|^2}^{(N),L}- \frac{1}{\lambda_{j+1}^2} \phi_{|V_{j+1}|^2} \left( P(t), \frac{\lambda_j y_j}{\lambda_{j+1}}+ \frac{(-1)^j \alpha}{\lambda_{j+1}} \right) \right] V_j e^{i\gamma_j+ i\beta_j \cdot x}.
\end{aligned} \end{equation}
The second term on the first line is bounded by the right-hand side of \eqref{eq estimate of Psi} using \eqref{eq estimate of V_j}.

For the first term on the first line, using \eqref{eq definition of E_j tilde} and Lemma \ref{lem quantitative description of b_j m_j T_j}, we have
\begin{equation}
    |\tilde{E}_j| \le \sum_{n=N+1}^{4N+4} C_0^n L^n r^{-n} e^{-c_0 |y_j|} \le \sum_{n=N+1}^{4N+4} (C_0 \eta)^n e^{-c_0|y_j|} \le e^{-N} e^{-c_0|y_j|},
\end{equation}
provided $\eta$ is small enough.
Then we have $|\tilde{E}_j| \le e^{-c_0 r_0} e^{-c_0|y_j|}$ since $N \ge \frac{1}{2} \eta r_0$. 

It remains to consider the second line. We set $\zeta= (-1)^j (\lambda_{j+1} \xi- \lambda_j y_j)$. Then
\begin{align} 
    &\quad \ \phi_{\left| V_{j+1} \right|^2}^{(N),L}- \frac{1}{\lambda_{j+1}^2}\phi_{|V_{j+1}|^2} \left( \frac{\lambda_j y_j}{\lambda_{j+1}}+ \frac{(-1)^j \alpha}{\lambda_{j+1}} \right) \\
    &= \frac{1}{4\pi \lambda_{j+1}} \int |V_{j+1}(\xi)|^2 \bigg( \frac{1}{|\alpha- \zeta|}- \sum_{n=1}^N F_n(\alpha, \zeta) \bigg) \d\xi+ \sum_{n=11}^N \psi^{(n)}_{|V_{j+1}|^2} \big( 1- \chi(\frac{y_j}{L}) \big).
\end{align}
We separate the integral into $I_1+ I_2$, where 
\begin{equation}
    I_1= \int_{\{ |\zeta| \le \frac{1}{2} r \}} |V_{j+1}(\xi)|^2 \bigg( \frac{1}{|\alpha- \zeta|}- \sum_{n=1}^N F_n(\alpha, \zeta) \bigg) \d\xi
\end{equation}
and
\begin{equation}
    I_2= \int_{\{ |\zeta| \ge \frac{1}{2} r \}} |V_{j+1}(\xi)|^2 \bigg( \frac{1}{|\alpha- \zeta|}- \sum_{n=1}^N F_n(\alpha, \zeta) \bigg) \d\xi.
\end{equation}

If $|\zeta| \le \frac{1}{2} r$, then the power series $\sum F_n(\alpha, \zeta)$ converges, which implies
\begin{equation}
    \bigg| \frac{1}{|\alpha- \zeta|}- \sum_{n=1}^N F_n(\alpha, \zeta) \bigg| \le C_0^N \frac{|\zeta|^N}{r^{N+1}}.
\end{equation}
By \eqref{eq estimate of V_j}, there exists $c_1>0$ depending on $K$ such that $|V_{j+1}(\xi)| \le C_0 e^{-c_1|\xi|}$. Using 
\begin{equation}
    |\xi| \ge (2K)^{-1}|\xi| \ge \frac{1}{2} K^{-2}|\zeta|- \frac{1}{2} |y_j|
\end{equation} 
and 
\begin{equation}
    \int e^{-|x|} |x|^N \dx \le C_0^N N!
\end{equation}
we obtain
\begin{align}
    |I_1| &\le \frac{C_0^N}{r^{N+1}} \int e^{-c_1|\xi|} |\zeta|^N \d\xi \le \frac{C_0^N}{r^{N+1}} e^{\frac{1}{2} c_1 |y_j|} \int e^{-\frac{c_1}{2K^2}|\zeta|} |\zeta|^N \d\xi \le \frac{C_0^N N!}{r^{N+1}} e^{\frac{1}{2} c_1 |y_j|}.
\end{align}
Then from $\frac{1}{2} \eta r_0 \le N \le \eta r_0$, taking $\eta$ small enough, we get 
\begin{equation}
    |I_1| \le C_0 e^{-c_0 r_0} e^{\frac{1}{2} c_1 |y_j|}.
\end{equation}

If $|\zeta| \ge \frac{1}{2} r$, then we use the crude estimate
\begin{equation}
    \bigg| \frac{1}{|\alpha- \zeta|}- \sum_{n=1}^N F_n(\alpha, \zeta) \bigg| \le \frac{1}{|\alpha- \zeta|}+C_0^N \frac{|2\zeta|^N}{r^N}.
\end{equation}
The contribution of the second term is estimated as $I_1$. For the first term $\frac{1}{|\alpha- \zeta|}$, using
\begin{equation}
    |\xi| \ge \frac{1}{2} K^{-2}|\zeta|- \frac{1}{2} |y_j| \ge \frac{1}{8} K^{-2}r+ \frac{1}{4} K^{-2}|\zeta|- \frac{1}{2} |y_j|
\end{equation}
and integrability of $e^{-|x|} \frac{1}{|\alpha-x|}$, we have
\begin{equation}
    \int_{\{ |\zeta| \ge \frac{1}{2} r \}} |V_{j+1}(\xi)|^2 \frac{1}{|\alpha- \zeta|} \d\xi \le e^{-\frac{c_1}{8K^2} r} e^{\frac{1}{2}c_1 |y_j|} \int e^{-\frac{c_1}{4K^2} |\zeta|} \frac{1}{|\alpha-\zeta|} \d\xi \le C_0 e^{-c_0 r} e^{\frac{1}{2} c_1 |y_j|}. 
\end{equation}
We thus derive
\begin{equation}
    |I_2| \le C_0 e^{-c_0 r_0} e^{\frac{1}{2} c_1 |y_j|}.
\end{equation}

Finally, since $|F_n(\alpha, \zeta)| \le C^n |\alpha|^{-n} |\zeta|^{n-1}$, by similar estimates as for $I_1$, we have
\begin{equation}
    \big| \psi^{(n)}_ {|V_{j+1}|^2} (y_j) \big| \le \frac{C_0^n n!}{r^n} e^{\frac{1}{2} c_1 |y_j|} \le e^{-n} e^{\frac{1}{2} c_1 |y_j|} \;\; \text{for} \;\; n \le N,
\end{equation}
provided $\eta$ is small enough. Then using $L \ge \frac{1}{2} \eta r_0$, we obtain 
\begin{equation}
    \sum_{n=11}^N \Big| \psi^{(n)}_{|V_{j+1}|^2} \big( 1- \chi(\frac{y_j}{L}) \big) \Big| \le \big( 1- \chi(\frac{y_j}{L}) \big) e^{\frac{1}{2} c_1 |y_j|} \le e^{-c_0r_0} e^{\frac{2}{3} c_1 |y_j|}.
\end{equation}

With all the above estimates as well as $|V_j(y_j)| \le C_0 e^{-c_1|y_j|}$, we deduce \eqref{eq estimate of Psi}.
\end{proof}

\section{Stability of the approximate center dynamics} \label{sec ode analysis}

In this section, we study the $N$-th order approximation of the $2$-body problem 
\begin{equation} \label{eq 2-body N-th}
    \dot{\alpha}_j= 2\beta_j, \quad \dot{\beta}_j= B_j^{(N)}, \quad \dot{\lambda}_j= M_j^{(N)}, \quad j=1,2.
\end{equation}
where $B_j^{(N)}$ and $M_j^{(N)}$ are as in Proposition \ref{prop construction of approximate bubbles}. If we can find a solution to this, then after solving $\gamma_j$ from
\begin{equation}
    \dot{\gamma}_j= \frac{1}{\lambda_j^2}- |\beta_j|^2- \dot{\beta}_j \cdot \alpha_j+ \frac{\| Q \|_{L^2}^2}{4\pi \lambda_{j+1}} \frac{1}{|\alpha|},
\end{equation}
we will be able to eliminate the modulation error $S_j^{(N)}$.

Our goal is to show that on a certain time scale, the solution to \eqref{eq 2-body N-th} remains close to the solution to \eqref{eq 2-body}. Note that Proposition \ref{prop no lower order error} is an essential input for stability as it implies that the approximate acceleration differs from the Kepler acceleration only at order $|\alpha|^{-7}$. For this reason, we provide the proof of Proposition \ref{prop no lower order error} now.

First, since we only care about $n \le 6$, by \eqref{eq definition of psi^n,L}, we always have $\psi^{(n),L}= \psi^{(n)}$ and the cutoff function $\chi$ is not involved.

Before carrying out the calculation, we need some preliminary facts. As in \cite[Lemma~4.2]{Hartree4Dmultisoliton}, we have $\Delta_x F_n(\alpha,x)= 0$. This is because $F_n$ comes from the Taylor expansion of a harmonic function. This leads us to consider spherical harmonics; for reference, see \cite{Harmonictextbook}.

For $l \ge 0$, let $\mathcal{Y}_l= \left\{ P|_{S^2} \ \big| \ P \text{ is homogeneous of degree } l \text{ and } \Delta P=0 \right\}$ and 
\begin{equation}
    \H_l= L^2 \big( (0,+\infty), r^2 \d r \big) \otimes \mathcal{Y}_l,
\end{equation}
which is the \emph{$l$-th spherical harmonic subspace of $L^2(\R^3)$}.
The following properties will be useful:
\begin{enumerate} [label=(\alph*)]
    \item \label{item orthogonal} $L^2(\R^3)= \bigoplus_{l=0}^\infty \H_l$, and $\H_m \perp \H_n$ whenever $m \neq n$.
    \item \label{item product rule} If $m<n$, then $\H_m \cdot \H_n \cap L^2 \subset \H_{n-m} \oplus \H_{n-m+2} \oplus \cdots \oplus \H_{m+n}$.
\end{enumerate}
Property \ref{item orthogonal} implies that components in $\H_l,\ l \ge 2$ do not affect $b_j^{(n)}$ and $m_j^{(n)}$ since $\nabla Q \in \H_1$ and $Q \in \H_0$. It also gives some useful cancellation in our calculation. Property \ref{item product rule} will help us project terms in \eqref{eq E_j^N hat} into the spaces $\H_l$.  

We can give more information on $\psi^{(n)}$. Assume $f \in \H_l$ and $f$ decays exponentially. Recall
\begin{equation}
    \psi_{f,j+1}^{(n)} (y_j)= -\frac{1}{4\pi \lambda_{j+1}} \int f(\xi) F_n \big( \alpha, (-1)^j (\lambda_{j+1} \xi- \lambda_j y_j) \big) \d \xi.
\end{equation}
Note $F_n \in \H_{n-1}$ because $F_n \not\in L^2$. But $F_n \in \tilde{\H}_{n-1}$ for $n \ge 1$, where
\begin{equation}
    \tilde{\H}_l= \left\{ f:\R^3 \to \C \ \big| \ \forall \epsilon>0, e^{-\epsilon|\cdot|} f(\cdot) \in \H_l \right\}.
\end{equation}
Take $0<\epsilon \ll 1$ so that $e^{\epsilon|\xi|} f(\xi)$ is still in $\H_l$. For each $y_j$, we have
\begin{equation}
    e^{-\epsilon|\xi|} F_n \big( \alpha, (-1)^j (\lambda_{j+1} \xi- \lambda_j y_j) \big) \in \H_0 \oplus \cdots \oplus \H_{n-1} \quad (\text{in } \xi),
\end{equation} 
and the $\H_k$-component is homogeneous of degree $n-k-1$ in $y_j$. By Property \ref{item orthogonal}, we deduce 
\begin{equation} \label{eq psi^n regarding H_l}
    \psi_{f,j+1}^{(n)}(y_j) \left\{ \begin{aligned}
        &=0, && n \le l, \\
        &\in \tilde{\H}_{n-l-1}, && n>l.
    \end{aligned} \right.
\end{equation}
In particular, we have $\psi_{Q^2,j+1}^{(N+1)} \in \tilde{\H}_N$ and thus $\psi_{Q^2,j+1}^{(N+1)} Q \in \H_N$. Then Property \ref{item orthogonal} gives
\begin{equation} \label{eq psi orthogonality}
    \big( \psi^{(N+1)}_{Q^2,j+1} Q, \nabla Q \big)=0, \quad \forall N \ge 2.
\end{equation}

Finally, note that $L_\pm: D(L_\pm) \cap \H_l \to \H_l$. In Lemma \ref{lem inverse of L+, L-}, if we have $f \in \H_l$, then the inverse $u$ can also be chosen in $\H_l$. We will use this choice when solving $T_j^{(n)}$ from \eqref{eq formula of T_j}.

\begin{proof}[Proof of Proposition \ref{prop no lower order error}]
With the convention that $V_j^{(0)}= T_j^{(0)}=Q$, we can define $\tilde{E}_j^{(N)}$ for $N=0$. Then \eqref{eq E_j^N hat}, \eqref{eq formula of m_j b_j} and \eqref{eq formula of T_j} are valid for all $N \ge 0$.

\;\\
\textbf{1st order:} Using \eqref{eq E_j^N hat}, we have 
\begin{equation}
    \hat{E}_j^{(0)}= -\frac{\| Q \|_{L^2}^2}{4\pi \lambda_{j+1}} \frac{1}{|\alpha|} Q.
\end{equation} 
We compute directly $\psi_{Q^2,j+1}^{(1)}= -\frac{\| Q \|_{L^2}^2}{4\pi \lambda_{j+1}} \frac{1}{|\alpha|}$. Then using \eqref{eq formula of m_j b_j}, we get 
\begin{equation}
    b_j^{(1)}=0, \quad m_j^{(1)}=0 
\end{equation}
and thus \eqref{eq formula of T_j} yields
\begin{equation}
    T_j^{(1)}=0.
\end{equation} 
We emphasize that this is the place where the interaction shift $-\frac{\| Q \|_{L^2}^2}{4\pi \lambda_{j+1}} \frac{1}{|\alpha|} V_j^{(N)}$ in \eqref{eq definition of E_j tilde} is useful. It cancels $\psi_{Q^2,j+1}^{(1)} Q$ when $N=0$, which yields $T_j^{(1)}=0$.

\;\\
\textbf{2nd order:} Then we have $\hat{E}_j^{(1)}= 0$. We compute directly
\begin{equation}
    \psi_{Q^2,j+1}^{(2)} = (-1)^j\frac{\| Q \|_{L^2}^2 \lambda_j}{4\pi \lambda_{j+1}} \frac{\alpha \cdot y_j}{|\alpha|^3}.
\end{equation}
Then using \eqref{eq formula of m_j b_j}, we get
\begin{equation}
    b_j^{(2)}= \frac{(-1)^{j-1}\| Q \|_{L^2}^2}{4\pi \lambda_{j+1}} \frac{\alpha}{|\alpha|^3}, \quad m_j^{(2)}=0
\end{equation}
and thus \eqref{eq formula of T_j} yields
\begin{equation}
    T_j^{(2)}=0.
\end{equation}

For higher orders, we do not compute the explicit formula of $T_j^{(n)}$. Instead, we use properties of spherical harmonics to determine which $\H_l$-component of $T_j^{(n)}$ is nonzero. This information will already be enough for us to obtain $b_j^{(n)}$ and $m_j^{(n)}$ for $1 \le n \le 6$. 

The following expression will be used repeatedly:
\begin{equation} \label{eq psi^1 and psi^2}
    \psi^{(1)}_{Q^2,j+1}= -\frac{\| Q \|_{L^2}^2}{4\pi \lambda_{j+1}} \frac{1}{|\alpha|}, \quad \psi^{(2)}_{Q^2, j+1}= -\lambda_j b_j^{(2)} \cdot y_j.
\end{equation}

\;\\
\textbf{3rd order:} Then we have $\hat{E}_j^{(2)}=0$. Using \eqref{eq formula of m_j b_j} and \eqref{eq psi orthogonality}, we have
\begin{equation}
    b_j^{(3)}=0, \quad m_j^{(3)}=0.
\end{equation}
Then \eqref{eq formula of T_j} and \eqref{eq psi^n regarding H_l} yield
\begin{equation}
    \Re \ T_j^{(3)} \in \H_2, \quad \Im \ T_j^{(3)}=0.
\end{equation}

\;\\
\textbf{4th order:} Then we have 
\begin{equation}
    \hat{E}_j^{(3)}= -2\psi^{(1)}_{QT_{j+1}^{(3)}} Q- \psi^{(1)}_{Q^2,j+1} T_j^{(3)}- \frac{\|Q\|_{L^2}^2}{4\pi \lambda_{j+1}} \frac{1}{|\alpha|} T_j^{(3)}+ i\frac{\partial T_j^{(3)}}{\partial \alpha} \cdot 2\beta.
\end{equation}
We have $\psi^{(1)}_{Q T_{j+1}^{(3)}}=0$ by \eqref{eq psi^n regarding H_l} since $QT_{j+1}^{(3)} \in \H_2$. Then by \eqref{eq psi^1 and psi^2}, we get $\hat{E}_j^{(3)}= i\frac{\partial T_j^{(3)}}{\partial \alpha} \cdot 2\beta$. Therefore, 
\begin{equation}
    \Re \ \hat{E}_j^{(3)}=0, \quad \Im \ \hat{E}_j^{(3)} \in \H_2.
\end{equation}
Using \eqref{eq formula of m_j b_j} and \eqref{eq psi orthogonality}, we have
\begin{equation}
    b_j^{(4)}=0, \quad m_j^{(4)}=0.
\end{equation}
Then \eqref{eq formula of T_j} and \eqref{eq psi^n regarding H_l} yield
\begin{equation}
    \Re \ T_j^{(4)} \in \H_3, \quad \Im \ T_j^{(4)} \in \H_2.
\end{equation}

\;\\
\textbf{5th order:} Then we have
\begin{align}
    \hat{E}_j^{(4)}= & -2\psi^{(1)}_{Q \Re \ T_{j+1}^{(4)}} Q- \psi^{(1)}_{Q^2,j+1} T_j^{(4)}- 2\psi^{(2)}_{Q T_{j+1}^{(3)}} Q- \psi^{(2)}_{Q^2, j+1} T_j^{(3)} \\
    &- \lambda_j b_j^{(2)} \cdot y_j T_j^{(3)}- \frac{\| Q \|_{L^2}^2}{4\pi \lambda_{j+1}} \frac{1}{|\alpha|} T_j^{(4)}+ i\frac{\partial T_j^{(4)}}{\partial \alpha} \cdot 2\beta.
\end{align}
We have $\psi^{(1)}_{Q \Re \ T_{j+1}^{(4)}}= \psi^{(2)}_{QT_{j+1}^{(3)}}= 0$ by \eqref{eq psi^n regarding H_l}. Then by \eqref{eq psi^1 and psi^2}, we get $\hat{E}_j^{(4)}= i\frac{\partial T_j^{(4)}}{\partial \alpha} \cdot 2\beta$. Therefore, 
\begin{equation}
    \Re \ \hat{E}_j^{(4)} \in \H_2, \quad \Im \ \hat{E}_j^{(4)} \in \H_3.
\end{equation}
Using \eqref{eq formula of m_j b_j} and \eqref{eq psi orthogonality}, we have
\begin{equation}
    b_j^{(5)}=0, \quad m_j^{(5)}=0.
\end{equation}
Then \eqref{eq formula of T_j} and \eqref{eq psi^n regarding H_l} yield
\begin{equation}
    \Re \ T_j^{(5)} \in \H_2 \oplus \H_4, \quad \Im \ T_j^{(5)} \in \H_3.
\end{equation}

\;\\
\textbf{6th order:} Then we have
\begin{align}
    \hat{E}_j^{(5)}= &-\lambda_j^{-2} \phi_{|T_j^{(3)}|^2} Q- 2\lambda_j^{-2} \phi_{Q T_j^{(3)}} T_j^{(3)}- 2\psi^{(1)}_{Q \Re \ T_{j+1}^{(5)}} Q- \psi^{(1)}_{Q^2,j+1} T_j^{(5)} \\
    &- \psi^{(2)}_{Q^2,j+1} T_j^{(4)}- 2\psi^{(2)}_{Q \Re \ T_{j+1}^{(4)}} Q- \psi^{(3)}_{Q^2,j+1} T_j^{(3)}- 2\psi^{(3)}_{Q T_{j+1}^{(3)}} Q \\
    &- \lambda_j b_j^{(2)} \cdot y_j T_j^{(4)}- \frac{\| Q \|_{L^2}^2}{4\pi \lambda_{j+1}} \frac{1}{|\alpha|} T_j^{(5)}+ i\frac{\partial T_j^{(5)}}{\partial \alpha} \cdot 2\beta+ i\frac{\partial T_j^{(4)}}{\partial \beta} \big( b_2^{(2)}- b_1^{(2)} \big).
\end{align}
We have $\psi^{(1)}_{Q \Re \ T_{j+1}^{(5)}}= \psi^{(2)}_{Q \Re \ T_{j+1}^{(4)}}=0$ by \eqref{eq psi^n regarding H_l}. Then by \eqref{eq psi^1 and psi^2}, we get
\begin{align}
    \hat{E}_j^{(5)}= &-\lambda_j^{-2} \phi_{|T_j^{(3)}|^2} Q- 2\lambda_j^{-2} \phi_{Q T_j^{(3)}} T_j^{(3)}- \psi^{(3)}_{Q^2,j+1} T_j^{(3)}- 2\psi^{(3)}_{Q T_{j+1}^{(3)}} Q \\
    &+ i\frac{\partial T_j^{(5)}}{\partial \alpha} \cdot 2\beta+ i\frac{\partial T_j^{(4)}}{\partial \beta} \big( b_2^{(2)}- b_1^{(2)} \big).    
\end{align}
The first line, consisting of terms with $T_j^{(3)}$, is real-valued. Using Property \ref{item product rule} and \eqref{eq psi^n regarding H_l}, we see these terms are in $\H_0 \oplus \H_2 \oplus \H_4 \oplus \H_6$. Therefore,
\begin{equation}
    \Re \ \hat{E}_j^{(5)} \in \H_0 \oplus \H_2 \oplus \H_3 \oplus \H_4 \oplus \H_6, \quad \Im \ \hat{E}_j^{(5)} \in \H_2 \oplus \H_3 \oplus \H_4.
\end{equation}
Using \eqref{eq formula of m_j b_j} and \eqref{eq psi orthogonality}, we have
\begin{equation}
    b_j^{(6)}=0, \quad m_j^{(6)}=0.
\end{equation}
We have thus proved Proposition \ref{prop no lower order error}.
\end{proof}

\begin{rmk}
With some additional work, one can show that $b_j^{(7)}=0$ if and only if $\lambda_1= \lambda_2$. Therefore, in the setting of this paper where we do not make the equal-mass assumption, Proposition \ref{prop no lower order error} is optimal. 

This also shows that extra cancellation occurs when $\lambda_1= \lambda_2$ and one may be able to obtain more precise dynamical information under this assumption, though it is not needed here. In fact, Proposition \ref{prop stability of 2-body} below already gives a longer time scale for the stability of the center dynamics than the time scale we will obtain later in Section \ref{sec error estimates} for energy estimates.
\end{rmk}

We now prove the following stability result. Recall our notation that $\alpha= \alpha_2- \alpha_1$, $\beta= \beta_2- \beta_1$ and $\lambda= (\lambda_1, \lambda_2)$.

\begin{prop} \label{prop stability of 2-body}
Let $C_e>0$ and $\lambda_1^0, \lambda_2^0>0$. Then there exist $\eta>0$, $C>0$ and $\bar{r}_0>0$ such that for any $r_0> \bar{r}_0$ and $\frac{1}{2} \eta r_0 \le N \le L \le \eta r_0$, if $P^0(t)$ is an elliptic solution to \eqref{eq 2-body} satisfying 
\begin{equation}
    r_0 \le |\alpha^0(t)| \le 
    C_e r_0, \quad \forall t \ge 0,
\end{equation}
then the solution $P^{(N)}$ to \eqref{eq 2-body N-th} with initial data $P^{(N)}(0)=P^0(0)$ satisfies
\begin{equation} 
    \left\{\begin{aligned} &|\alpha^{(N)}(t)- \alpha^0(t)| \le C r_0^{-3/2}, \\
    &|\beta^{(N)}(t)-\beta^0(t)| \le C r_0^{-3}, \\
    &|\lambda^{(N)}(t)- \lambda^0| \le C r_0^{-9/2}, 
    \end{aligned} \right. \quad \forall 0 \le t \le r_0^{5/2}.
\end{equation}
\end{prop}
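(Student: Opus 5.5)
We reduce to the relative coordinates $(\alpha,\beta,\lambda)$ and run a bootstrap on $[0,r_0^{5/2}]$. By Proposition \ref{prop no lower order error} and \eqref{eq estimate of B_j, M_j}, the system \eqref{eq 2-body N-th} reads
\begin{equation}
    \dot\alpha^{(N)}=2\beta^{(N)},\quad \dot\beta^{(N)}=-\mu(\lambda^{(N)})\frac{\alpha^{(N)}}{|\alpha^{(N)}|^3}+f(t),\quad \dot\lambda^{(N)}_j=M_j^{(N)},
\end{equation}
where $\mu(\lambda)=\frac{\|Q\|_{L^2}^2}{4\pi}(\lambda_1^{-1}+\lambda_2^{-1})$ and $|f|+|M_j^{(N)}|\le C_0 r_0^{-7}$. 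These bounds hold as long as the hypotheses \eqref{eq rough estimate on parameters} of Proposition \ref{prop accuracy of approximate solution} hold, with a suitable $K$ depending on $C_e,\lambda^0$. The bootstrap assumption is therefore
\begin{equation}
    \tfrac12 r_0\le|\alpha^{(N)}|\le 2C_er_0,\qquad |\beta^{(N)}|\le K,\qquad |\lambda^{(N)}-\lambda^0|\le \tfrac12\min_j\lambda^0_j.
\end{equation}
This assumption holds at $t=0$ because $P^{(N)}(0)=P^0(0)$. Under it, integrating $\dot\lambda^{(N)}$ gives immediately
\begin{equation}
    |\lambda^{(N)}(t)-\lambda^0|\le C r_0^{-7}\,t\le C r_0^{-9/2}.
\end{equation}
Consequently $|\mu(\lambda^{(N)})-\mu(\lambda^0)|\lesssim r_0^{-9/2}$, so the extra force from the mass drift is $O(r_0^{-2}\cdot r_0^{-9/2})$, which is even smaller than $f$. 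In total, $\alpha^{(N)}$ solves a Kepler problem with the fixed constant $\mu^0=\mu(\lambda^0)$, perturbed by a force of size $O(r_0^{-13/2})$.

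We do not compare the two trajectories by a naive Gronwall argument, since that would lose $e^{t/r_0^{3/2}}$. Instead we work with the Kepler integrals of the perturbed motion and their time derivatives:
\begin{itemize}
    \item the energy $E=|\beta|^2-\mu^0/|\alpha|$, with $\dot E=2\beta\cdot(\text{force perturbation})=O(r_0^{-1/2}r_0^{-13/2})$;
    \item the angular momentum $\alpha\times\beta$;
    \item the Laplace--Runge--Lenz vector.
\end{itemize}
Integrating over time $r_0^{5/2}$ gives $|\delta E|\lesssim r_0^{-9/2}$, which is tiny compared to $|E|\sim r_0^{-1}$. Likewise the angular momentum (scale $r_0^{1/2}$) and the Runge--Lenz vector (scale $1$) change by amounts of relative size $\lesssim r_0^{-4}$. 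Hence the osculating ellipse of $\alpha^{(N)}$ stays within relative distance $O(r_0^{-4})$ of the fixed ellipse of $\alpha^0$. Because $r_0\le|\alpha^0|\le C_er_0$, the eccentricity is uniformly bounded away from $1$. This closes the bootstrap assumptions for $r_0$ large, and the shape contributes a position error of only $O(r_0^{-3})$.

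The main obstacle is the secular phase drift along the orbit. By Kepler's third law the mean motion is $\omega\propto |E|^{3/2}/\mu^0$, so $\delta\omega/\omega\lesssim r_0^{-4}$, while $\omega\sim r_0^{-3/2}$. The mean anomaly also has a direct perturbative forcing $O(r_0^{-4}\omega)$. The accumulated phase error up to time $r_0^{5/2}$ is therefore $\lesssim \omega\, t\, r_0^{-4}\sim r_0^{-3}$. This yields a position error of order $r_0\cdot r_0^{-3}$ and a velocity error of order $r_0^{-1/2}\cdot r_0^{-3}$, comfortably within $Cr_0^{-3/2}$ and $Cr_0^{-3}$ respectively.

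To make this rigorous, I would first write the perturbed motion in Delaunay (action–angle) variables $(L,G,\ell,g,\dots)$ for the Kepler problem with the fixed $\mu^0$. The eccentricity bound keeps this chart uniformly nondegenerate. The perturbation gives $\dot{\text{actions}}$ and $\dot{\text{angles}}-\omega(L)$ both $O(r_0^{-13/2})$ in suitably rescaled units. Integrating these equations and then applying the smooth map from action–angle variables back to $(\alpha,\beta)$, which has scale-adapted bounded derivatives, gives the stated estimates. Those estimates in turn close the bootstrap.
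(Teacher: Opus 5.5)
Your route is genuinely different from the paper's, and it is sound in outline.

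\textbf{The paper's route.} The paper never introduces orbital elements. It rescales to $\tau=r_0^{-3/2}t$, $x=r_0^{-1}\alpha$, $y=r_0^{1/2}\beta$, in which units the perturbing force is $O(r_0^{-9/2})$. It then writes the Duhamel formula $U(\tau)-U^0(\tau)=\int_0^\tau D\Phi_{\tau-s}(U(s))(0,F(s))\,\mathrm{d}s$ for the Kepler flow $\Phi$. The key input is $\|D\Phi_\tau(Z)\|\lesssim 1+\tau$ on a compact set of elliptic data, obtained by differentiating $\Phi_\tau(Z)=\Phi_{\tau-nT(Z)}(Z)$. The resulting term $n\,V\,DT$ is exactly the period shear you identify via Kepler's third law. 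This yields $|U-U^0|\lesssim \tau(1+\tau)r_0^{-9/2}\le r_0^{-5/2}$ for $\tau\le r_0$.

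\textbf{Comparison.} The paper's argument is coordinate-free and covers all eccentricities in $[0,1)$ uniformly by compactness. Your osculating-element picture gives more information: the orbit's shape is perturbed only at relative order $r_0^{-7/2}$, and the entire error in the statement is along-track phase. The price is that you must choose and control a chart.

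\textbf{Exponent bookkeeping is off by $r_0^{1/2}$.} From your own $|\delta E|\lesssim r_0^{-9/2}$ and $|E|\sim r_0^{-1}$, the relative drift of $E$, $\alpha\times\beta$ and the Runge--Lenz vector is $r_0^{-7/2}$, not $r_0^{-4}$. Equivalently, it is the rescaled force $r_0^{-9/2}$ times the rescaled time $r_0$. Also, the mass-drift force $r_0^{-9/2}\cdot r_0^{-2}=r_0^{-13/2}$ dominates $f=O(r_0^{-7})$ rather than being smaller, although your total $O(r_0^{-13/2})$ is right. Consequently $\delta\omega(s)/\omega\lesssim r_0^{-6}s$. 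The accumulated phase error at $t=r_0^{5/2}$ is therefore $\sim\omega\, r_0^{-6}t^2\sim r_0^{-5/2}$. This gives a position error $r_0^{-3/2}$ and a velocity error $r_0^{-3}$, which saturate the stated bounds exactly rather than being ``comfortably within'' them. The proof still closes, but there is no slack.

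\textbf{The Delaunay chart is not uniformly nondegenerate here.} The hypothesis $r_0\le|\alpha^0|\le C_e r_0$ keeps the eccentricity away from $1$, not away from $0$, and circular orbits are allowed. At $e=0$ the angles $\ell,g$ are undefined and their perturbation equations carry $1/e$ factors. The node is likewise undefined if the reference plane is the orbital plane. There are three ways around this:
\begin{enumerate}
\item use nonsingular Poincar\'e-type elements: the action $L$, the mean longitude, and $(e\cos\varpi,e\sin\varpi)$-type variables;
\item control the shape only through the smooth integrals $E$, $\alpha\times\beta$ and the Runge--Lenz vector, together with a single regular phase;
\item use the paper's compactness argument, which avoids the chart issue entirely.
\end{enumerate}
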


\begin{rmk}
The time scale $r_0^{5/2}$ can be enlarged, at the cost of a weaker upper bound. However, since the time scale in the theorem is $r_0^2$, this is sufficient for out purposes.
\end{rmk}

\begin{proof}
By a bootstrap argument, it suffices to prove the following: for any $T_* \in [0,r_0^{5/2}]$, if 
\begin{equation} \label{eq ode bootstrap assumption}
    \left\{\begin{aligned} &|\alpha^{(N)}(t)- \alpha^0(t)| \le C r_0^{-3/2}, \\
    &|\beta^{(N)}(t)-\beta^0(t)| \le C r_0^{-3}, \\
    &|\lambda^{(N)}(t)- \lambda^0| \le C r_0^{-9/2}, 
    \end{aligned} \right. \quad \forall 0 \le t \le T_*,
\end{equation}
then
\begin{equation} 
    \left\{\begin{aligned} &|\alpha^{(N)}(t)- \alpha^0(t)| \le \frac{1}{2} C r_0^{-3/2}, \\
    &|\beta^{(N)}(t)-\beta^0(t)| \le \frac{1}{2} C r_0^{-3}, \\
    &|\lambda^{(N)}(t)- \lambda^0| \le \frac{1}{2} C r_0^{-9/2}, 
    \end{aligned} \right. \quad \forall 0 \le t \le T_*.
\end{equation}
Then \eqref{eq ode bootstrap assumption} implies $P^{(N)}$ satisfies \eqref{eq rough estimate on parameters}, and thus \eqref{eq estimate of B_j, M_j} holds since $\frac{1}{2} \eta r_0 \le N \le L \le \eta r_0$.

The third line is straightforward by integrating the equation for $\lambda$:
\begin{equation}
    |\lambda^{(N)}(t)- \lambda^0| \le \int_0^t |M_1^{(N)}|+ |M_2^{(N)}| \le C_0 r_0^{-7} t \le C_0 r_0^{-9/2}, \quad \forall t \le T_*,
\end{equation}
so taking $C$ large enough will suffice. It remains to deal with the first two lines.

For an elliptic trajectory of size $r_0$, the relative velocity has size $r_0^{-1/2}$, and the fundamental period has size $r_0^{3/2}$. Thus we normalize the problem by scaling
\begin{equation}
    \tau= r_0^{-3/2} t, \quad x^0= r_0^{-1} \alpha^0, \quad y^0= r_0^{1/2} \beta^0, \quad x= r_0^{-1} \alpha^{(N)}, \quad y= r_0^{1/2} \beta^{(N)}.
\end{equation}
We have $1 \le |x^0| \le C_e$ and $0 \le \tau \le r_0^{-3/2} T_* \le r_0$. Moreover, \eqref{eq 2-body} implies
\begin{equation} \label{eq 2-body dif}
    \dot{x}^0= 2y^0, \quad \dot{y}^0= -m_0 \frac{x^0}{|x^0|^3},
\end{equation} 
and \eqref{eq 2-body N-th} implies
\begin{equation} \label{eq 2-body N-th dif}
    \dot{x}= 2y, \quad \dot{y}= -m_0 \frac{x}{|x|^3}+ F,
\end{equation}
where a dot denotes differentiation with respect to $\tau$, $m_0= \frac{\|Q\|_{L^2}^2}{4\pi} (\frac{1}{\lambda_1^0}+ \frac{1}{\lambda_2^0}) $ and
\begin{align}
    F= & \ r_0^2 \sum_{j=1}^2 (-1)^j\Big( B_j^{(N)}(\alpha^{(N)}, \beta^{(N)}, \lambda^{(N)})- b_j^{(2)}(\alpha^{(N)}, \beta^{(N)}, \lambda^{(N)}) \Big) \\
    &- \frac{\|Q\|_{L^2}^2}{4\pi} \Big( \frac{1}{\lambda_1^{(N)}}- \frac{1}{\lambda_1^0}+ \frac{1}{\lambda_2^{(N)}}- \frac{1}{\lambda_2^0} \Big) \frac{x}{|x|^3} .
\end{align}
By \eqref{eq ode bootstrap assumption} and \eqref{eq estimate of B_j, M_j}, we have
\begin{equation} \label{eq estimate of F}
    |F| \le C_0 r_0^2 r_0^{-7}+ C_0 r_0^{-9/2} \le C_0 r_0^{-9/2}.
\end{equation}

Let us write $U^0=(x^0,y^0)$ and $U=(x,y)$ so that \eqref{eq 2-body dif} and \eqref{eq 2-body N-th dif} can be viewed as $6$-dimensional first-order ODEs of $U^0$ and $U$, respectively. Let $\Phi_\tau$ denote the flow associated with \eqref{eq 2-body dif}. Namely, for $Z \in \R^6$, let $\Phi_\tau(Z)$ be the solution of \eqref{eq 2-body dif} with initial data $\Phi_0(Z)=Z$. Strictly speaking, $\Phi_\tau$ is only defined on a subset of $\R^6$.

\begin{lem}
We have the following Duhamel's formula
\begin{equation} \label{eq duhamel}
    U(\tau)- U^0(\tau)= \int_0^\tau D\Phi_{\tau-s} (U(s))(0,F(s)) \d s, \quad \forall \tau \ge 0.
\end{equation}
\end{lem}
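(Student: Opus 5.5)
The plan is to prove the formula by the standard nonlinear variation-of-constants (Alekseev--Gr\"obner) argument: interpolate between $U^0(\tau)$ and $U(\tau)$ along the unperturbed Kepler flow, then differentiate in the interpolation parameter. Write \eqref{eq 2-body dif} as $\dot U^0 = X(U^0)$, where
\begin{equation}
    X(x,y)= \Big( 2y, -m_0 \frac{x}{|x|^3} \Big)
\end{equation}
is a smooth vector field on $\Omega=\{(x,y): x\neq 0\}$. Then \eqref{eq 2-body N-th dif} reads $\dot U = X(U)+(0,F)$. Since $P^{(N)}(0)=P^0(0)$, we have $U(0)=U^0(0)$, and hence $U^0(\tau)=\Phi_\tau(U(0))$.

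First I would settle the domain issue: the argument needs $\Phi_{\sigma}(U(s))$ to be defined for all $0\le s\le \tau$ and $0\le\sigma\le \tau-s$. The point $U^0(s)$ lies on a bounded Kepler orbit with negative energy. Its angular momentum $x^0\times y^0$ is nonzero, since otherwise the motion would be collinear and would collide, contradicting $|x^0|\ge1$. Under the bootstrap assumption \eqref{eq ode bootstrap assumption}, $U(s)$ is within $O(r_0^{-3/2})$ of $U^0(s)$ in the rescaled variables. For $r_0$ large, $U(s)$ therefore has negative energy and nonzero angular momentum, both bounded away from the degenerate values uniformly in $s$, because these quantities are conserved along $U^0$. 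The Kepler orbit starting at $U(s)$ is then a non-collision ellipse, so $\Phi_\sigma(U(s))$ exists for all $\sigma\in\R$. Moreover, $\Phi$ is smooth, jointly in $(\sigma,Z)$, on the open set of such $Z$.

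The main computation is then short. Fix $\tau$ and set $G(s)=\Phi_{\tau-s}(U(s))$ for $s\in[0,\tau]$. Then $G(0)=\Phi_\tau(U(0))=U^0(\tau)$ and $G(\tau)=U(\tau)$. By the chain rule,
\begin{equation}
    G'(s)= -X\big(\Phi_{\tau-s}(U(s))\big)+ D\Phi_{\tau-s}(U(s))\big[X(U(s))+(0,F(s))\big].
\end{equation}
The key identity is $D\Phi_\sigma(Z)X(Z)=X(\Phi_\sigma(Z))$. It follows by differentiating $\Phi_\sigma(\Phi_h(Z))=\Phi_{\sigma+h}(Z)$ in $h$ at $h=0$, using the group property of the autonomous flow. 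This identity cancels the first two terms, leaving
\begin{equation}
    G'(s)=D\Phi_{\tau-s}(U(s))(0,F(s)).
\end{equation}
Integrating from $0$ to $\tau$ gives \eqref{eq duhamel}.

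The only step needing care is the non-collision and global-existence claim for $\Phi$ along the perturbed points $U(s)$. I would handle it exactly as above, through continuity of the energy and angular momentum together with the bootstrap assumption; the remaining steps are routine calculus.
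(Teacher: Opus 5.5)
Your proof is correct and is essentially the paper's argument: the paper also sets $W(s)=\Phi_{\tau-s}(U(s))$, uses the identity $V(\Phi_s(Z))=D\Phi_s(Z)V(Z)$ obtained by differentiating the flow property, and integrates from $0$ to $\tau$. Your additional justification that $\Phi_\sigma(U(s))$ is globally defined, via energy and angular momentum bounded away from degenerate values under the bootstrap assumption, fills in a point the paper only flags in passing (``strictly speaking, $\Phi_\tau$ is only defined on a subset of $\R^6$''). As a result your version holds on the bootstrap interval, which is exactly where the formula is used.
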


\begin{proof}
Let $W(s)=\Phi_{\tau-s}(U(s))$ and $V(X,Y)= (2Y, -m_0 \frac{X}{|X|^3})$. Then 
\begin{equation}
    \frac{\d}{\d s} W(s)= - V(\Phi_{\tau-s} (U(s)))+ D\Phi_{\tau-s} (U(s)) U'(s).
\end{equation}
From the flow property $\Phi_{s+h}(Z)=\Phi_s \Phi_h(Z)$, taking the derivative in $h$ and then setting $h=0$ gives $V(\Phi_s(Z))= D\Phi_s(Z) V(Z)$.
Then replacing $s$ by $\tau-s$ and $Z$ by $U(s)$, we obtain
\begin{equation}
    V(\Phi_{\tau-s} (U(s)))= D\Phi_{\tau-s} (U(s)) V(U(s)).
\end{equation}
Note $U'(s)= V(U(s))+ (0,F(s))$. We thus deduce
\begin{equation}
    \frac{\d}{\d s} W(s)= D\Phi_{\tau-s} (U(s))(0,F(s)).
\end{equation}
Then \eqref{eq duhamel} follows since $W(0)= U^0(\tau)$ and $W(\tau)= U(\tau)$.
\end{proof}

Next, we show that $D\Phi_\tau$ grows at most linearly in $\tau$.

\begin{lem} \label{lem Hill estimate}
Fix $0<\epsilon \ll 1$. Let $K_\epsilon$ be the set of $Z \in \R^6$ such that $X(\tau)$ describes an ellipse and $\epsilon \le |X(\tau)| \le \epsilon^{-1}$ when we write $\Phi_\tau(Z)= (X(\tau), Y(\tau))$. Then 
\begin{equation}
    \| D\Phi_\tau(Z) \| \lesssim_\epsilon 1+\tau, \quad \forall Z \in K_\epsilon, \; \tau \ge 0.
\end{equation}
\end{lem}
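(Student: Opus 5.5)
The plan is to exploit the complete integrability of the Kepler problem: away from collision, the flow is conjugate to a linear flow on a torus via action--angle variables (Delaunay variables). Under this conjugation, the derivative of the flow grows at most linearly in time, a secular growth caused solely by the dependence of the period on the energy.

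\textbf{Step 1: invariant coordinates.} On the set of elliptic states I would use the conserved quantities
\begin{itemize}
\item the energy $E=|Y|^2-m_0|X|^{-1}$, which is negative;
\item the angular momentum $X\times Y$;
\item the Laplace--Runge--Lenz vector;
\item the mean anomaly $\ell$.
\end{itemize}
Together these determine $Z$ smoothly on a neighborhood of $K_\epsilon$; it suffices to check local coordinates near each point, using redundancy where needed. The issue is that the Delaunay angles are singular for circular orbits (where the argument of pericenter is undefined) and for equatorial orbits (where the node is undefined). So rather than Delaunay variables I would use the following chart. Write
\begin{equation}
Z \mapsto (I(Z),\ell(Z)),
\end{equation}
where $I$ collects the conserved quantities. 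Rather than the mean anomaly itself, I would use the time-shift representation $\Phi_\tau(Z)=\Psi(I,\theta+\omega(I)\tau)$. Here $\Psi$ is a smooth parametrization of the orbit through $Z$ by the phase $\theta\in\R/2\pi\mathbb{Z}$, and $\omega(I)=2\pi/T(E)$ with $T(E)\propto |E|^{-3/2}$ by Kepler's third law.

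\textbf{Step 2: the chart is smooth.} The key claim is that the map $(I,\theta)\mapsto \Psi(I,\theta)$ is smooth, with uniformly bounded derivatives on the compact set $K_\epsilon$ (closed and bounded, hence compact, since $E$ is bounded away from $0$ and $-\infty$ there). The cleanest way I see to get a smooth chart avoiding the circular/equatorial degeneracies is a section argument. Fix $Z_0\in K_\epsilon$. The orbit $\{\Phi_\tau(Z_0)\}$ is a closed curve of period $T(E_0)$. Take a local transversal hypersurface $\Sigma$ through $Z_0$, namely $\Sigma=\{Z: V(Z_0)\cdot(Z-Z_0)=0\}$ near $Z_0$. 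The map $(\sigma,s)\mapsto \Phi_s(\sigma)$, for $\sigma\in\Sigma$ and $s\in\R$, is a local diffeomorphism by the flow-box theorem. Every orbit near $Z_0$ is periodic with period $T(E(\sigma))$, a smooth function. Hence
\begin{equation}
\Phi_\tau(\Phi_s(\sigma))=\Phi_{s+\tau \bmod T(E(\sigma))}(\sigma),
\end{equation}
where the reduction mod $T(E(\sigma))$ is applied consistently. Differentiating in $(\sigma,s)$:
\begin{itemize}
\item the $s$-derivative gives $V$, which is bounded;
\item the $\sigma$-derivative gives the bounded term $D\Phi_{s'}(\sigma)$, with $s'\in[0,T]$ on a compact time interval, plus the time-dependent term $-\tau\, V\cdot \nabla_\sigma\big(T(E(\sigma))\big)/T\cdot k$.
\end{itemize}
To make the second contribution precise, write $s+\tau=kT+s'$. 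The $\sigma$-dependence of $T$ then enters with factor $k\lesssim 1+\tau$.

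\textbf{Step 3: conclude.} This gives $\|D\Phi_\tau\|\le C(Z_0)(1+\tau)$ near $Z_0$, uniformly for $\tau\ge0$, since the chart and its inverse have bounded derivatives near $Z_0$. Compactness of $K_\epsilon$ then gives a uniform constant. One subtlety is that $\Phi_\tau(Z)$ for $Z\in K_\epsilon$ stays in $K_\epsilon$, since the set is invariant. So we can equally cover the whole orbit by finitely many flow boxes and compose, and the only unbounded factor remains $k\,\partial T$.

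\textbf{Main obstacle.} I expect the main difficulty to be the justification that the orbit-wise reduction is smooth. Specifically, one must check that $T(E)$ depends smoothly on $Z$, which holds because $E<0$ and $E$ is bounded on $K_\epsilon$. One must also check that returning to the section at exactly multiples of $T$ is consistent; this is where exact periodicity of every nearby orbit (all bounded Kepler orbits are closed) is essential. Without it, $D\Phi_\tau$ would not be controlled by a single secular term.
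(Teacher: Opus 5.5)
Your proposal is correct and uses the same mechanism as the paper: compactness of $K_\epsilon$, smoothness of the period $T(Z)$, and differentiating the periodicity identity $\Phi_\tau(Z)=\Phi_{\tau-nT(Z)}(Z)$ with the integer $n$ held fixed, so that the only unbounded contribution is $n\,V(\Phi_{\tau-nT(Z)}(Z))\,DT(Z)$ with $n\lesssim 1+\tau$. The Delaunay discussion and the flow-box chart are unnecessary detours, because that identity holds on a full neighborhood of each $Z\in K_\epsilon$ and can be differentiated directly in $Z$, which is exactly what the paper does.
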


\begin{proof}
We write $K=K_\epsilon$ and let $C$ denote constants depending only on $\epsilon$. 

We first show that $K$ is compact. For $Z=(X_0, Y_0) \in K$, we have $\epsilon \le |X_0| \le \epsilon^{-1}$ and 
\begin{equation}
    |Y_0|^2- \frac{m_0}{|X_0|}<0
\end{equation}
since $X(\tau)$ describes an ellipse. Thus $|Y_0|^2 \le 2m_0 \epsilon^{-1}$, so $K$ is bounded. Since $X(\tau)$ is continuous in $Z$ and the lower bound of $|X(\tau)|$ ensures that there is no blow-up, we see that $K$ is closed. Therefore, $K$ is a compact set.

For $Z \in K$, let $a(Z)$ be the semi-major axis of $X(\tau)$ and $T(Z)$ be the fundamental period. By the smooth dependence of parameters and compactness of $K$, we have
\begin{equation}
    |DT(Z)| \le C, \quad \forall Z \in K.
\end{equation}
Moreover, $T(Z) /a(Z)^3$ is constant by Kepler's third law. Since $\epsilon \le |X(\tau)| \le \epsilon^{-1}$, we have $C^{-1} \le a(Z) \le C$ and thus $C^{-1} \le T(Z) \le C, \; \forall Z \in K$. Then by the smoothness of $\Phi_\tau(Z)$ in both $\tau$ and $Z$, we have
\begin{equation}
    \| D\Phi_{\tau}(Z) \| \le C, \quad \forall Z \in K, \; \tau \in [0,T(Z)].
\end{equation}

Let $n \in \N$. For any $\tau \ge 0$, let $s(Z)=\tau- n T(Z)$. Then 
\begin{equation}
    \Phi_\tau(Z)= \Phi_{s(Z)}(Z).
\end{equation}
We differentiate both sides in $Z$. Since $Ds(Z)= -n DT(Z)$, we get
\begin{equation}
    D\Phi_\tau(Z)= D\Phi_{s(Z)}(Z)- nV(\Phi_{s(Z)}(Z)) DT(Z).
\end{equation}
Therefore, if $s(Z) \in [0,T(Z)]$, or equivalently, $\tau \in [nT(Z), (n+1)T(Z)]$, then
\begin{equation}
    \| D\Phi_\tau(Z) \| \le C+ Cn.
\end{equation}
This implies the desired estimate since $n \le \tau/T(Z)$ and $T(Z) \ge C^{-1}$.
\end{proof}

The estimate now follows from a straightforward calculation. By \eqref{eq ode bootstrap assumption}, if $r_0$ is large enough, then $U(s)$ remains close to $U^0(s)$, and thus $U(s) \in K_\epsilon$ for some $\epsilon$ depending on $C_e$. We can then apply Lemma \ref{lem Hill estimate} to get 
\begin{equation}
    \| D\Phi_{\tau-s} (U(s)) \| \le C_0 (1+\tau).
\end{equation}
Note that $0 \le t \le T_*$ implies $0 \le \tau \le r_0$. Using \eqref{eq duhamel} and \eqref{eq estimate of F}, we obtain
\begin{equation}
    |U(\tau)- U^0(\tau)| \le C_0 \tau (1+\tau) r_0^{-9/2} \le C_0 r_0^{-5/2}, \quad \forall 0 \le t \le T_*.
\end{equation}
Finally, we scale back to the original parameters and we get
\begin{equation}
    |\alpha^{(N)}(t)- \alpha^0(t)| \le C_0 r_0^{-3/2}, \quad |\beta^{(N)}(t)-\beta^0(t)| \le C_0 r_0^{-3}, \quad \forall 0 \le t \le T_*.
\end{equation}
Then the desired estimate follows by taking $C$ large enough.
\end{proof}

\section{Modulation and error estimates} \label{sec error estimates}

In this section, we prove the main theorem. We first reduce Theorem \ref{thm finite time stability} to a statement about the approximate solution. 

\begin{prop} \label{prop finite time stability with N}
Given $K>1$ and $\delta>0$, there exist $\nu>0$, $\eta>0$, $\bar{r}_0>0$ such that for any $r_0> \bar{r}_0$, there exists $\kappa>0$ with the following property. 

If $\frac{1}{2} \eta r_0 \le N \le L \le \eta r_0$ and $P^{(N)}(t)$ is a solution to \eqref{eq 2-body N-th} with 
\begin{equation} \label{eq estimate of P^N}
    |\alpha^{(N)}(t)| \ge r_0, \quad |\beta^{(N)}(t)| \le K, \quad K^{-1} \le \lambda_j^{(N)}(t) \le K, \quad \forall 0 \le t \le \nu r_0^2
\end{equation}
and $\gamma_1^{(N)}(t)$, $\gamma_2^{(N)}(t)$ satisfy
\begin{equation}
    \dot{\gamma}_j^{(N)}= \frac{1}{(\lambda_j^{(N)})^2}- |\beta_j^{(N)}|^2- \dot{\beta}_j^{(N)} \cdot \alpha_j^{(N)}+ \frac{\| Q \|_{L^2}^2}{4\pi \lambda_{j+1}^{(N)}} \frac{1}{|\alpha^{(N)}|}, \quad j=1,2,
\end{equation}
then for $u_0 \in H^1(\R^3)$,
\begin{equation} \label{eq closeness of initial data}
    \Big\| u_0- R_{g^{(N)}}^{(N)}(0, \cdot) \Big\|_{H^1}< \kappa
\end{equation}
implies
\begin{equation} \label{eq finite time stability}
    \Big\| u(t, \cdot)- (g_1^{(N)} Q)(t, \cdot)- (g_2^{(N)} Q)(t,\cdot) \Big\|_{H^1}< \delta, \quad \forall 0 \le t \le \nu r_0^2,
\end{equation}
where $u$ is the solution to \eqref{eq hartree} with $u(0)= u_0$.
\end{prop}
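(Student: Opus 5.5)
We follow the modulation and energy method of \cite{KMR2bodyHartree,Hartree3Dmultisoliton}, with the approximate profile $R^{(N)}_{g}$ in place of the bare two-soliton. The key input is Proposition \ref{prop accuracy of approximate solution}, which makes the profile error $e^{-c_0r_0}$-small uniformly in time.

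\textbf{Modulation.} As long as $u(t)$ stays $H^1$-close to some $R^{(N)}_{g}$, the implicit function theorem gives modulated parameters $g(t)=(P(t),\gamma_1(t),\gamma_2(t))$ near $g^{(N)}(t)$. Write
\begin{equation}
u=R^{(N)}_{g(t)}+\varepsilon,
\end{equation}
where, after pulling back by $g_j$, $\varepsilon$ is orthogonal to $i\nabla Q$, $yQ$, $i\Lambda Q$ and $Q$ near each soliton. These are the directions dual to the generalized kernel of $L_\pm$. Initially $\|\varepsilon(0)\|_{H^1}\lesssim\kappa$ and $|g(0)-g^{(N)}(0)|\lesssim\kappa$.

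The equation for $\varepsilon$ is
\begin{equation}
i\varepsilon_t+\Delta\varepsilon-(\text{linearized Hartree})\,\varepsilon=-\Psi^{(N)}-\sum_j\lambda_j^{-2}S_j^{(N)}e^{i\gamma_j+i\beta_j\cdot x}+O(\varepsilon^2).
\end{equation}
Pairing it with the orthogonality directions gives modulation equations. The modulation vector
\begin{equation}
\mathrm{Mod}(t)=\Bigl(\dot\alpha_j-2\beta_j,\ \dot\beta_j-B_j^{(N)},\ \dot\lambda_j-M_j^{(N)},\ \dot\gamma_j-(\cdots)\Bigr)
\end{equation}
then obeys
\begin{equation}
|\mathrm{Mod}(t)|\lesssim\|\varepsilon\|_{H^1}^2+r_0^{-3}\|\varepsilon\|_{H^1}+e^{-c_0r_0}.
\end{equation}
The factor $r_0^{-3}$ comes from $V_j^{(N)}-Q=O(r_0^{-3})$ by \eqref{eq estimate of V_j}, and from interaction terms that pair linearly against the orthogonality directions.

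\textbf{Localized energy.} Next we build a functional
\begin{equation}
\mathcal{W}(t)=\sum_j\int\Bigl(\tfrac12|\nabla\varepsilon|^2+\tfrac{1}{2\lambda_j^2}|\varepsilon|^2\Bigr)\chi_j-\tfrac12(\text{quadratic Hartree terms})+\sum_j\beta_j\cdot\Im\!\int\nabla\varepsilon\,\bar\varepsilon\,\chi_j,
\end{equation}
where $\chi_j$ is a partition of unity adapted to the two solitons at scale $\sim r_0$. Its gradient is $O(r_0^{-1})$, and this is what fixes the time scale.

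By Lemma \ref{lem coercivity of L+, L-} and the orthogonality conditions, $\mathcal{W}(t)\gtrsim\|\varepsilon\|_{H^1}^2$. The main step is the time derivative. Using mass, momentum and Hamiltonian conservation for the localized pieces, we aim for
\begin{equation}
\Bigl|\tfrac{d}{dt}\mathcal{W}\Bigr|\lesssim\bigl(r_0^{-1}+|\dot\beta_j|+|\mathrm{Mod}|\bigr)\|\varepsilon\|_{H^1}^2+\|\varepsilon\|_{H^1}^3+e^{-c_0r_0}\|\varepsilon\|_{H^1}.
\end{equation}
The sources of the $r_0^{-1}$ factor are $\nabla\chi_j$, the slow variation of $\lambda_j,\beta_j$, and the long-range part of $\phi_{|R|^2}$ acting on $\varepsilon$. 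The non-decaying Kepler potential from the other soliton is the delicate term. We handle it with the interaction shift already built into $\gamma_j$: the constant part $\frac{\|Q\|^2}{4\pi\lambda_{j+1}|\alpha|}$ is absorbed by the phase, and the remainder has gradient $O(r_0^{-2})$ near soliton $j$. We also need the linear terms in $\varepsilon$ to cancel against the modulation equations, so that only $e^{-c_0r_0}$ and $\mathrm{Mod}\cdot\varepsilon$-type terms survive. I expect this cancellation bookkeeping, and its uniformity in $N$, to be the main obstacle.

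\textbf{Bootstrap and conclusion.} Assume $\|\varepsilon(t)\|_{H^1}\le\delta'$ on $[0,T]$ with $T\le\nu r_0^2$. Gronwall then gives
\begin{equation}
\|\varepsilon(t)\|_{H^1}^2\lesssim e^{Ct/r_0}\bigl(\kappa^2+Te^{-c_0r_0}\bigr).
\end{equation}
Over $t\le\nu r_0^2$ the exponential factor is as large as $e^{C\nu r_0}$. It is absorbed by choosing $\kappa=\kappa(r_0)$ small, and by $\nu$ small relative to $c_0$, so that $e^{C\nu r_0}\cdot e^{-c_0r_0}r_0^2\ll\delta'^2$.

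Integrating the modulation bound gives
\begin{equation}
|g(t)-g^{(N)}(t)|\lesssim\int_0^t|\mathrm{Mod}|+\text{Gronwall on the parameter ODE}.
\end{equation}
Since $\int|\mathrm{Mod}|\le\nu r_0^2(\delta'^2+r_0^{-3}\delta')$ and we need this $\ll\delta$, we choose $\delta'\sim\delta^{1/2}\nu^{-1/2}r_0^{-1}$ with room to spare, or make the weaker terms quadratic. For the phases, note that $\gamma$ enters only through $e^{i\gamma}$, and errors in $\alpha$ are controlled through the $\beta$-equation.

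Finally, the triangle inequality together with \eqref{eq estimate of V_j} gives $\|R^{(N)}_{g}-g_1^{(N)}Q-g_2^{(N)}Q\|_{H^1}\lesssim r_0^{-3}+|g-g^{(N)}|<\delta$, which closes the bootstrap and yields \eqref{eq finite time stability}.
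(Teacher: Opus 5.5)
Your overall architecture is the paper's: the same four orthogonality conditions per soliton, a Galilean-corrected localized energy functional, and a Gronwall argument on $[0,\nu r_0^2]$ in which the factor $e^{C\nu r_0}$ is beaten by the $e^{-c_0 r_0}$ error of Proposition~\ref{prop accuracy of approximate solution}. The step that fails is the last one, where you close the parameter estimates at a polynomial level.

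You bound $|g-g^{(N)}|$ essentially by $\int_0^t|\mathrm{Mod}|$, with $\mathrm{Mod}\lesssim\delta'^2+r_0^{-3}\delta'$ and $\delta'\sim\delta^{1/2}\nu^{-1/2}r_0^{-1}$. This misses three things.
\begin{itemize}
\item The position error is driven by the velocity error, $\frac{d}{dt}(\alpha_j-\alpha_j^{(N)})=2(\beta_j-\beta_j^{(N)})+O(\mathrm{Mod})$, so it is a double time integral of $\mathrm{Mod}$. A $\beta$-modulation error of size $\delta'^2\sim\delta\nu^{-1}r_0^{-2}$ acting for time $\nu r_0^2$ can produce $|\alpha_j-\alpha_j^{(N)}|\sim\delta\nu r_0^2$. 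But $H^1$-closeness of $g_jQ$ and $g_j^{(N)}Q$ needs $|\alpha_j-\alpha_j^{(N)}|\ll 1$.
\item The feedback $B_j(P)-B_j(P^{(N)})$, with $\partial_\alpha B_j=O(r_0^{-3})$, turns the comparison into a genuine Gronwall argument. Under the hypotheses of the proposition (no ellipticity is assumed there), its amplification over $[0,\nu r_0^2]$ is not $O(1)$. With the paper's weighted quantity $F=r_0^{-2}|\alpha-\alpha^{(N)}|^2+|\beta-\beta^{(N)}|^2+|\lambda-\lambda^{(N)}|^2$ one only gets $\dot F\le C_0r_0^{-1}F+C_0\,\mathrm{Mod}\,F^{1/2}$, i.e.\ a factor $e^{C_0\nu r_0}$.
\item The phase equation contains $\dot\beta_j\cdot\alpha_j$, and the boost $e^{i\beta_j\cdot x}$ is evaluated where $|x|\approx|\alpha_j|\gtrsim r_0$. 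So errors in $\mathrm{Mod}$ and in $\beta_j$ enter the phase comparison with polynomially large weights.
\end{itemize}
A polynomially small $\mathrm{Mod}$ cannot absorb all of this.

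The fix is already contained in your own energy estimate, and it is exactly the paper's Step~4. Your Gronwall bound $\|\varepsilon(t)\|_{H^1}^2\lesssim e^{C\nu r_0}(\kappa^2+\nu r_0^2e^{-c_0r_0})$ is exponentially small once $C\nu<c_0/2$ and $\kappa=\kappa(r_0)$ is tiny. So run the bootstrap at an exponentially small level and feed it back into the modulation estimate to get $\mathrm{Mod}\lesssim e^{-cr_0}$. Then $\dot F\le C_0r_0^{-1}F+C_0e^{-cr_0}F^{1/2}$ gives $F\lesssim e^{-c'r_0}$ on $[0,\nu r_0^2]$ for $\nu$ small. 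The phases follow by direct integration, and every polynomial weight is absorbed. The closeness $P(t)\approx P^{(N)}(t)$ must itself be part of the bootstrap, because Proposition~\ref{prop accuracy of approximate solution} and the modulation estimate are applied at the modulated parameters $P(t)$, not at $P^{(N)}(t)$.

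Two smaller points.
\begin{itemize}
\item The Galilean correction in $\mathcal W$ should be $\frac12(\lambda_j^{-2}+|\beta_j|^2)\int\chi_j|\varepsilon|^2-\beta_j\cdot\Im\int\chi_j\nabla\varepsilon\,\bar\varepsilon$. With your sign and without $|\beta_j|^2$, writing $\varepsilon=e^{i\beta_j\cdot x}w$ does not reduce the quadratic part to $\frac12\int(|\nabla w|^2+\lambda_j^{-2}|w|^2)$. Then neither coercivity for non-small $\beta_j$ nor the cancellations in $\frac{d}{dt}\mathcal W$ hold.
\item The $r_0^{-3}$ in your modulation bound relies on cancellations you do not verify. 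The cruder $r_0^{-1}$ proved in the paper is all that is needed.
\end{itemize}
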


\begin{proof}[Proof of Theorem \ref{thm finite time stability} assuming Proposition \ref{prop finite time stability with N}] \ 

For an elliptic solution $P^0$ to \eqref{eq 2-body} as in Theorem \ref{thm finite time stability}, take $N$, $L$ such that $\frac{1}{2} \eta r_0 \le N \le L \le \eta r_0$ and let $P^{(N)}$ be the solution to \eqref{eq 2-body N-th} with the same initial data. Let $\gamma_j^{(N)}$ also have the same initial data as $\gamma_j$.

We take $R_0= R_{g^{(N)}}^{(N)}(0, \cdot)$, which satisfies \eqref{eq R_0 close to initial} because of \eqref{eq estimate of V_j}. Proposition \ref{prop stability of 2-body} implies \eqref{eq estimate of P^N} for $K$ and $r_0$ large enough, so we may apply Proposition \ref{prop finite time stability with N} to get \eqref{eq finite time stability}. 

Finally, we deduce \eqref{eq estimate in thm} from \eqref{eq finite time stability} by using Proposition \ref{prop stability of 2-body} and a similar estimate regarding the phase $\gamma_j^{(N)}$ for $r_0$ large enough.
\end{proof}

Now it remains to prove Proposition \ref{prop finite time stability with N}. The strategy is similar to the proof of \cite[Proposition~3.3]{KMR2bodyHartree} or \cite[Proposition~3.2]{Hartree3Dmultisoliton}, especially Step 1--3 below, except for the Gr\"onwall argument. The main difference is that since we are studying the elliptic case, we do not gain any decay by taking $t$ large enough, as opposed to \cite{KMR2bodyHartree, Hartree3Dmultisoliton}. Instead, the final estimate relies essentially on the uniform smallness of the approximation error obtained in Proposition \ref{prop accuracy of approximate solution}. The choice $N \sim r_0$ ensures that the constants $c_0, C_0$ there do not depend on $N$.

\begin{proof}[Proof of Proposition \ref{prop finite time stability with N}] 
In the proof, we will let $c_0, C_0$ denote positive constants that depend only on $K$. We write $f=O_0(|g|)$ if $|f| \le C_0 |g|$.

\;\\
\textbf{Step 1:}
Let $u$ be the solution to \eqref{eq hartree} with $u(0,\cdot)=u_0$ satisfying \eqref{eq closeness of initial data}. Using a bootstrap argument and the inverse function theorem, it is standard that for $r_0$ large enough and $\kappa$ small enough, there exists $g(t)$ such that if 
\begin{equation}
    \varepsilon(t,x)= u(t,x)- R_g^{(N)}(t,x),
\end{equation}
then $\| \varepsilon \|_{H^1}$ is small, and for $t \in [0,\nu r_0^2]$ and $j=1,2$, we have
\begin{equation} \label{eq orthogonality} \begin{aligned}
    &\mathrm{Re} \Big( \varepsilon(t), g_j V_j^{(N)} \Big)= \mathrm{Re} \Big( \varepsilon(t), g_j \big( y_j V_j^{(N)} \big) \Big) \\
    = &\ \mathrm{Im} \Big( \varepsilon(t), g_j \big( \Lambda V_j^{(N)} \big) \Big)= \mathrm{Im} \Big( \varepsilon(t), g_j \big( \nabla V_j^{(N)}\big) \Big)= 0.    
\end{aligned} \end{equation}
In particular, we have
\begin{equation} 
    \| g(0)- g^{(N)}(0) \|< C_0 \kappa,  \quad \| \varepsilon(0) \|_{H^1}< C_0 \kappa.
\end{equation}
A detailed proof of this statement can be found in \cite[Lemma~3.3]{Hartree3Dmultisoliton}.

To simplify notation, we write $R_j= R_{j,g}^{(N)}$, $R= R_g^{(N)}$, $M_j= M_j^{(N)}$, $B_j= B_j^{(N)}$, $V_j= V_j^{(N)}$, $S_j= S_j^{(N)}$ and $\Psi= \Psi^{(N)}$. We have
\begin{equation} \label{eq equation of epsilon}
    i\partial_t \varepsilon+ \Delta \varepsilon- 2\phi_{\Re (\varepsilon \ol{R})}R- \phi_{|R|^2} \varepsilon= \mathcal{N}(\varepsilon)- \Psi- \sum_{j=1}^2 \frac{1}{\lambda_j^2} S_j(t,x) e^{i\gamma_j+ i\beta_j \cdot x},
\end{equation} 
where
\begin{equation}
    \mathcal{N}(\varepsilon)= 2\phi_{\mathrm{Re} (\varepsilon \overline{R})} \varepsilon+ \phi_{|\varepsilon|^2}R+ \phi_{|\varepsilon|^2} \varepsilon.
\end{equation}

\;\\
\textbf{Step 2:} We estimate the closeness between $g$ and $g^{(N)}$ in terms of $\|\varepsilon\|_{H^1}$. 

Define the modulation error 
\begin{equation} \begin{aligned}
    Mod(t)= \sum_{j=1}^2 \Bigg( &\Big| \dot{\alpha}_j(t)- 2\beta_j(t) \Big| + \Big| \dot{\beta}_j(t)- B_j(P(t)) \Big| + \Big| \dot{\lambda}_j(t)- M_j(P(t)) \Big| \\
    &+ \bigg| \dot{\gamma}_j(t)- \frac{1}{\lambda_j^2(t)}+ |\beta_j(t)|^2+ \dot{\beta}_j(t) \cdot \alpha_j(t)- \frac{\| Q \|_{L^2}^2}{4\pi \lambda_{j+1}} \frac{1}{|\alpha|} \bigg| \Bigg). 
\end{aligned} \end{equation}
Let $\theta_j= g_j \theta$, where $\theta(t,x)$ satisfies
\begin{equation} \label{eq decay of theta}
    |\theta(t,x)| \le C_0 e^{-c_0|x|}, \quad \forall t>0,\ x \in \mathbb{R}^3.
\end{equation}

By \eqref{eq equation of epsilon}, we can compute
\begin{equation} \label{eq modulation equation with theta} \begin{aligned}
    \frac{\d}{\dt} \Im \int \varepsilon \ol{\theta_j}= &\ \Re \int \varepsilon \left( \ol{i\partial_t \theta_j+ \Delta \theta_j- 2\phi_{\Re (\theta_j \ol{R})}R- \phi_{|R|^2} \theta_j} \right) \\
    &+ \Re \int \Big( \Psi- \mathcal{N}(\varepsilon) \Big) \ol{\theta_j}+ \sum_{k=1}^2 \Re \int \frac{1}{\lambda_k^2} S_k(t,x) e^{i\gamma_k+ i\beta_k \cdot x} \ol{\theta_k}.
\end{aligned} \end{equation}
By \eqref{eq estimate of B_j, M_j} and \eqref{eq decay of theta}, we have
\begin{equation}
    i\partial_t \theta_j+ \Delta \theta_j= \frac{1}{\lambda_j^4} \left( i\lambda_j^2 \partial_t \theta+ \Delta \theta- \theta \right) e^{i\gamma_j+ i\beta_j \cdot x}+ O_0 \left( \left( r_0^{-1}+ Mod \right) e^{-c_0 |x-\alpha_j|} \right).
\end{equation}
By the localization of $V_j$, \eqref{eq decay of theta} and the proof of Proposition \ref{prop accuracy of approximate solution}, we have
\begin{align}
    \phi_{\Re (\theta_j \overline{R})} R &= \frac{1}{\lambda_j^4} \phi_{\Re (\theta \ol{V_j})} V_j e^{i\gamma_j+ i\beta_j \cdot x}+ O_0 \left( \big( r_0^{-1}+ e^{-c_0 r_0} \big) e^{-c_0 |x-\alpha_j|} \right)
\end{align}
and
\begin{equation}
    \phi_{|R|^2} \theta_j= \frac{1}{\lambda_j^4} \phi_{|V_j|^2} \theta e^{i\gamma_j+ i\beta_j \cdot x}+ O_0 \left( \big( r_0^{-1}+ e^{-c_0 r_0} \big) e^{-c_0 |x-\alpha_j|} \right).
\end{equation}

Collecting the terms of degree $1$ in $\theta$, we define
\begin{equation}
    L_j \theta:= -\Delta \theta+ \theta+ 2\phi_{\mathrm{Re} (\theta \overline{V_j})} V_j+ \phi_{|V_j|^2} \theta.
\end{equation}
This is the linearized operator around the profile $V_j$, and we have
\begin{equation} \begin{aligned}
    &i\partial_t \theta_j+ \Delta \theta_j- 2\phi_{\mathrm{Re} (\theta_j \overline{R})}R- \phi_{|R|^2} \theta_j \\
    =&\ \frac{1}{\lambda_j^4} \left( i\lambda_j^2 \partial_t \theta- L_j \theta \right) e^{i\gamma_j+ i\beta_j \cdot x}+ O_0 \Big( \big( r_0^{-1}+ Mod \big) \max_k e^{-c_0 |x-\alpha_k|} \Big).
\end{aligned} \end{equation}
Thus, from \eqref{eq modulation equation with theta}, \eqref{eq estimate of Psi} and \eqref{eq decay of theta}, we get
\begin{equation} \begin{aligned}
    \frac{\d}{\dt} \mathrm{Im} \int \varepsilon \overline{\theta_j}= &\ \mathrm{Re} \int \frac{\varepsilon}{\lambda_j^4} \overline{\left( i\lambda_j^2 \partial_t \theta- L_j \theta \right) e^{i\gamma_j+ i\beta_j \cdot x}}+ \frac{1}{\lambda_j^6} \Re \int S_j \overline{\theta}\\
    &+ O_0 \left( \Big( r_0^{-1}+ Mod \Big) \| \varepsilon \|_{H^1}+ e^{-c_0 r_0}+ \| \varepsilon \|_{H^1}^2 \right). 
\end{aligned} \end{equation}

We will take $\theta$ to be $iV_j$, $\nabla V_j$, $\Lambda V_j$ and $iy_j V_j$, which correspond to $\gamma_j$, $\alpha_j$, $\lambda_j$ and $\beta_j$, respectively. By \eqref{eq orthogonality}, the left-hand side always vanishes. By \eqref{eq estimate of V_j} and \eqref{eq estimate of B_j, M_j}, we always have
\begin{equation} \label{eq estimate on theta: partial_t}
    \partial_t \theta= O_0 \left( \left( r_0^{-1}+ Mod \right) e^{-c_0 |x-\alpha_j|} \right).
\end{equation}
By \eqref{eq estimate of V_j} and \eqref{eq ground state}, we know 
\begin{equation}
    W_j:= -\Delta V_j+ V_j+ \phi_{|V_j|^2} V_j
\end{equation}
satisfies $|W_j| \le C_0 r_0^{-1} e^{-c_0 |x-\alpha_j|}$. Direct computation yields
\begin{equation} \begin{gathered}
    L_j (iV_j) = iW_j, \quad L_j (\Lambda V_j) = (\Lambda+2) W_j- 2 V_j, \\
    L_j (\nabla V_j) = \nabla W_j, \quad L_j (iy_j V_j) =iy_j W_j- 2i \nabla V_j.
\end{gathered} \end{equation}
By \eqref{eq orthogonality}, we always have
\begin{equation}
    L_j \theta= f+ O_0 \left( r_0^{-1} e^{-c_0 |x-\alpha_j|} \right),
\end{equation}
where $f$ is a function such that $\mathrm{Re} \int \varepsilon (\ol{g_j f})= 0$. Thus
\begin{equation} \label{eq estimate on theta: L_j}
    \Re \int \frac{\varepsilon}{\lambda_j^4} \ol{ L_j \theta\ e^{i\gamma_j+ i\beta_j \cdot x}}= O_0 \left( r_0^{-1} \| \varepsilon \|_{H^1} \right).
\end{equation}
Finally, using \eqref{eq definition of S_j^N} and \eqref{eq estimate of V_j} and summing over the four choices of $\theta$, we have
\begin{equation} \label{eq estimate on theta: S_j}
    \sum_{\theta} \sum_{j=1}^2 \left| \mathrm{Re} \int S_j \overline{\theta} \right| \ge c_0 Mod- C_0 r_0^{-1} Mod.
\end{equation}

Therefore, combining \eqref{eq estimate on theta: partial_t}, \eqref{eq estimate on theta: L_j}, \eqref{eq estimate on theta: S_j} and taking $r_0$ large enough, we obtain
\begin{equation} \label{eq estimate of Mod with epsilon}
    Mod(t) \le C_0 r_0^{-1} \| \varepsilon \|_{H^1}+ e^{-c_0 r_0}+ C_0 \| \varepsilon \|_{H^1}^2. 
\end{equation}

\;\\
\textbf{Step 3:} We give an upper bound for $\| \varepsilon \|_{H^1}$. 

Let $\varphi_1, \varphi_2$ be smooth cutoff functions such that
\begin{equation} \label{eq cutoff} \begin{gathered}
    \varphi_j(t,x) \ge 0, \quad \sum_{j=1}^2 \varphi_j(t,x) \equiv 1, \quad |\partial_t \varphi_j|+ |\nabla \varphi_j| \le C_0 r^{-1}, \\
    \varphi_j(t,x)= \left\{ \begin{aligned}
        &1, \quad |x-\alpha_j(t)| \le c_0 r(t), \\
        &0, \quad |x-\alpha_{j+1}(t)| \le c_0 r(t).
    \end{aligned} \right.
\end{gathered} \end{equation} 
By \eqref{eq estimate of V_j}, we have
\begin{equation} \label{eq localization of R_j and cutoff}
    |\varphi_j R- R_j| \le C_0 e^{-c_0 r(t)}, \quad \forall t>0,\ x \in \mathbb{R}^3.
\end{equation}

Define $\G(\varepsilon)= \G_1+\G_2+\G_3$, where
\begin{gather*}
    \G_1= \int |\nabla \varepsilon|^2+ \int \phi_{|R|^2} |\varepsilon|^2- 2\int |\nabla \phi_{\Re (\varepsilon \ol{R}) }|^2+ 2\int \phi_{\Re (\varepsilon \ol{R}) } |\varepsilon|^2- \frac{1}{2} \int |\nabla \phi_{|\varepsilon|^2} |^2, \\
    \G_2= \sum_{j=1}^2 \Big( \frac{1}{\lambda_j^2}+ |\beta_j|^2 \Big) \int \varphi_j |\varepsilon|^2, \qquad \G_3= -2\sum_{j=1}^2 \beta_j \int \varphi_j \Im (\nabla \varepsilon \ol{\varepsilon}).
\end{gather*}
By \cite[Proposition~4.1]{Hartree3Dmultisoliton}, for $r_0$ large enough, we have $\G(\varepsilon) \ge c_0 \| \varepsilon \|_{H^1}^2$. 

Then we compute
\begin{equation} \begin{aligned}
    \frac{\d \G_1}{\dt}= &\ -2\Im \int i\partial_t \overline{\varepsilon} \left( \Delta \varepsilon- \phi_{|R|^2} \varepsilon- 2\phi_{\mathrm{Re} (\varepsilon \ol{R})} R- \mathcal{N}(\varepsilon) \right) \\
    &\ +4\Re \int \phi_{\Re (\varepsilon \ol{R})} \varepsilon \partial_t \ol{R}+ 2\int \phi_{\Re (\partial_t R \ol{R})} |\varepsilon|^2+ 2\Re \int \phi_{|\varepsilon|^2} \varepsilon \partial_t \ol{R}.
\end{aligned} \end{equation}
By \eqref{eq equation of epsilon} and \eqref{eq estimate of Psi}, the first line is $O_0 \left( \big( e^{-c_0 r_0}+ Mod \big) \| \varepsilon \|_{H^1} \right)$. 

For the second line, using \eqref{eq estimate of B_j, M_j}, we have
\begin{equation} \begin{aligned}
    \partial_t R_j &= \frac{1}{\lambda_j^2} \left( -\dot{\alpha}_j \cdot \nabla V_j- i(\dot{\gamma}_j- \dot{\beta}_j \cdot \alpha_j) V_j \right) e^{i\gamma_j+ i\beta_j \cdot x}+ O_0 \left( r_0^{-1} e^{-c_0 |x-\alpha_j|} \right) \\
    &= -2\beta_j \cdot \nabla R_j+ i\big( \frac{1}{\lambda_j^2}+ |\beta_j|^2 \big) R_j+ O_0 \left( \left( r_0^{-1}+ Mod \right) e^{-c_0 |x-\alpha_j|} \right).
\end{aligned} \end{equation}
Combining this with \eqref{eq estimate of Mod with epsilon}, we get
\begin{equation} \label{eq estimate of G_1} \begin{aligned}
    \frac{\d \G_1}{\dt}= &\sum_{j=1}^2 \Bigg( 4\Big(  \frac{1}{\lambda_j^2}+ |\beta_j|^2 \Big) \int \phi_{\mathrm{Re}( \varepsilon \overline{R})} \mathrm{Im} (\varepsilon \overline{R_j})- 8\int \phi_{\mathrm{Re} (\varepsilon \overline{R})} \mathrm{Re} (\varepsilon \beta_j \cdot \nabla \overline{R_j}) \\
    &\qquad \quad- 4\int \phi_{\mathrm{Re} (\beta_j \cdot \nabla R_j \overline{R_j})} |\varepsilon|^2 \Bigg) \\
    &+ O_0 \left( e^{-c_0 r_0} \| \varepsilon \|_{H^1}+ r_0^{-1} \| \varepsilon \|_{H^1}^2+ \| \varepsilon \|_{H^1}^3 \right).
\end{aligned} \end{equation} 

Using \eqref{eq cutoff}, we have
\begin{equation}
    \frac{\d \G_2}{\dt}= \sum_{j=1}^2 2\Big( \frac{1}{\lambda_j^2}+ |\beta_j|^2 \Big) \int \varphi_j \mathrm{Im} (i\partial_t \varepsilon \overline{\varepsilon})+ O_0 \left( r_0^{-1} \| \varepsilon \|_{H^1}^2 \right). 
\end{equation}
Then by \eqref{eq equation of epsilon} and \eqref{eq estimate of Psi}, we have
\begin{equation} \begin{aligned}
    \frac{\d \G_2}{\dt} &= \sum_{j=1}^2 2\Big( \frac{1}{\lambda_j^2}+ |\beta_j|^2 \Big) \int \varphi_j \mathrm{Im} (2\phi_{\mathrm{Re}(\varepsilon \overline{R})} R \overline{\varepsilon}) \\
    &\quad + O_0 \left( e^{-c_0 r_0} \| \varepsilon \|_{H^1}+ Mod \| \varepsilon \|_{H^1}+ r_0^{-1} \| \varepsilon \|_{H^1}^2+ \| \varepsilon \|_{H^1}^3 \right).
\end{aligned} \end{equation}
Finally, using \eqref{eq estimate of Mod with epsilon} and \eqref{eq localization of R_j and cutoff}, we get
\begin{equation} \label{eq estimate of G_2} \begin{aligned}
    \frac{\d \G_2}{\dt} &= -\sum_{j=1}^2 4\Big( \frac{1}{\lambda_j^2}+ |\beta_j|^2 \Big) \int \phi_{\mathrm{Re}(\varepsilon \overline{R})} \mathrm{Im} (\varepsilon \overline{R_j}) \\
    &\quad + O_0 \left( e^{-c_0 r_0} \| \varepsilon \|_{H^1}+ r_0^{-1} \| \varepsilon \|_{H^1}^2+ \| \varepsilon \|_{H^1}^3 \right).
\end{aligned} \end{equation} 

Similarly, we can compute
\begin{equation} \label{eq estimate of G_3} \begin{aligned}
    \frac{\d \G_3}{\dt} &= -\sum_{j=1}^2 4\beta_j \int \varphi_j \mathrm{Re}(i\partial_t \varepsilon \nabla \overline{\varepsilon})+ O_0 \left( r_0^{-1} \| \varepsilon \|_{H^1}^2 \right) \\
    &= -\sum_{j=1}^2 4\beta_j \int \varphi_j \mathrm{Re} \left( 2\phi_{\mathrm{Re} (\varepsilon \overline{R})} R \nabla \overline{\varepsilon}+ \phi_{|R|^2} \varepsilon \nabla \overline{\varepsilon} \right) \\
    &\quad + O_0 \left( e^{-c_0 r_0} \| \varepsilon \|_{H^1}+ Mod \| \varepsilon \|_{H^1}+ r_0^{-1} \| \varepsilon \|_{H^1}^2+ \| \varepsilon \|_{H^1}^3 \right) \\
    &= \sum_{j=1}^2 \left( 8\int \phi_{\mathrm{Re} (\varepsilon \overline{R})} \mathrm{Re}(\varepsilon \beta_j \cdot \nabla \overline{R_j})+ 4\int \phi_{\mathrm{Re} (\beta_j \cdot \nabla R_j \overline{R_j})} |\varepsilon|^2 \right) \\
    &\quad + O_0 \left( e^{-c_0 r_0} \| \varepsilon \|_{H^1}+ r_0^{-1} \| \varepsilon \|_{H^1}^2+ \| \varepsilon \|_{H^1}^3 \right).
\end{aligned} \end{equation}

Combining \eqref{eq estimate of G_1}, \eqref{eq estimate of G_2}, \eqref{eq estimate of G_3}, we deduce
\begin{equation}
    \left| \frac{\d}{\dt} \G(\varepsilon(t)) \right| \le C_0 r_0^{-1} \| \varepsilon \|_{H^1}^2+ C_0 e^{-c_0 r_0} \| \varepsilon \|_{H^1}+ C_0 \| \varepsilon \|_{H^1}^3.
\end{equation}
By the coercivity of $\G$, we have
\begin{equation}
    \left| \frac{\d \G}{\dt}  \right| \le C_0 r_0^{-1} \G+ C_0 e^{-c_0 r_0} \G^{\frac{1}{2}}+ C_0 \G^{\frac{3}{2}}.
\end{equation}
Then using Gr\"onwall's inequality, we deduce 
\begin{equation}
    \G(t) \le C_0 \sup_{\tau \in [0,t]} \left( e^{-c_0 r_0} \G^{\frac{1}{2}}(\tau)+ C_0 \G^{\frac{3}{2}}(\tau) \right) e^{\frac{C_0 t}{r_0}}+ C_0 \kappa^2 e^{\frac{C_0 t}{r_0}}.
\end{equation}
Since $t \le \nu r_0^2$, by taking $\nu$ small enough, then $r_0$ large enough, and finally $\kappa$ small enough, we obtain $\G(\varepsilon(t)) \le C_0 e^{-c_0 r_0}$ by bootstrap. Using the coercivity of $\G$, this yields 
\begin{equation}
    \| \varepsilon \|_{H^1} \le C_0 e^{-c_0 r_0}.
\end{equation}

\;\\
\textbf{Step 4:} We give an upper bound for $|g- g^{(N)}|$ independent of $\|\varepsilon\|_{H^1}$.

Now \eqref{eq estimate of Mod with epsilon} implies $Mod(t) \le C_0 e^{-c_0 r_0}$. Recall $\lambda=(\lambda_1, \lambda_2)$. Let 
\begin{equation}
    F(t)= r_0^{-2} \big| \alpha(t)- \alpha^{(N)}(t) \big|^2+ \big| \beta(t)- \beta^{(N)}(t) \big|^2+ \big| \lambda(t)- \lambda^{(N)}(t) \big|^2.
\end{equation}
Then
\begin{equation}
    \dot{F} \le C_0 F^{\frac{1}{2}} \Big( r_0^{-1} \big| \dot{\alpha}- \dot{\alpha}^{(N)} \big|+ \big| \dot{\beta}- \dot{\beta}^{(N)} \big|+ \big| \dot{\lambda}- \dot{\lambda}^{(N)} \big| \Big).
\end{equation}
Since $P^{(N)}$ satisfies \eqref{eq 2-body N-th} and by the definition of $Mod$, we get
\begin{equation}
    \dot{F} \le C_0 F^{\frac{1}{2}} \bigg( Mod+ r_0^{-1} \big| \beta- \beta^{(N)} \big|+ \sum_{j=1}^2 \Big[ \big| B_j(P)- B_j(P^{(N)}) \big|+ \big| M_j(P)- M_j(P^{(N)}) \big| \Big] \bigg).
\end{equation}
Using \eqref{eq estimate of B_j, M_j} and the fundamental theorem of calculus, we have
\begin{equation}
    \big| B_j(P)- B_j(P^{(N)}) \big|+ \big| M_j(P)- M_j(P^{(N)}) \big| \le C_0 r_0^{-1} F^{\frac{1}{2}}.
\end{equation}
We thus obtain
\begin{equation}
    \dot{F} \le C_0 r_0^{-1} F+ C_0 e^{-c_0 r_0} F^{\frac{1}{2}}.
\end{equation}
Then we deduce $F(t) \le C_0 e^{-c_0 r_0}$ for $t \le \nu r_0^2$ by Gr\"onwall's inequality as above. This gives the upper bound for $|P- P^{(N)}|$. The estimate for $\gamma$ follows by direct integration.

\;

In conclusion, in Step 3, we obtain 
\begin{equation}
    \| u(t, \cdot)- R_g^{(N)}(t, \cdot) \|_{H^1} \le C_0 e^{-c_0 r_0}, \quad \forall 0 \le t \le \nu r_0^2
\end{equation}
and in Step 4, we obtain
\begin{equation}
    |g- g^{(N)}| \le C_0 e^{-c_0 r_0}, \quad \forall 0 \le t \le \nu r_0^2.
\end{equation}
Thus we deduce
\begin{equation}
    \| u(t, \cdot)- R_{g^{(N)}}^{(N)}(t, \cdot) \|_{H^1} \le C_0 e^{-c_0 r_0}, \quad \forall 0 \le t \le \nu r_0^2.
\end{equation}
By \eqref{eq estimate of V_j}, we have 
\begin{equation}
    \| R_{g^{(N)}}^{(N)}(t, \cdot)- (g_1^{(N)}Q) (t,\cdot)- (g_2^{(N)}Q) (t,\cdot) \|_{H^1} \le C_0 r_0^{-3}.
\end{equation}
Therefore, we conclude \eqref{eq finite time stability} by taking $r_0$ large enough so that $C_0 e^{-c_0 r_0}+ C_0 r_0^{-3}< \delta$.
\end{proof}

\section*{Acknowledgments}
I would like to thank Professor Wilhelm Schlag for helpful discussions.

\appendix

\section{Growth estimates for the recursive sequence} \label{appendix sequence lemma}

We prove the following lemma, which completes the proof of Lemma \ref{lem quantitative description of b_j m_j T_j}.

\begin{lem}
Given $C_0, a_0>0$, define the sequence $\{a_n\}_{n=1}^\infty$ by
\begin{equation}
    a_n= C_0 \sum_{\substack{k+l+m \le n \\ 0 \le k,l,m \le n-1}} a_k a_l a_m, \quad \forall n \ge 1.
\end{equation}
Then there exists a constant $C>0$ depending on $C_0, a_0$ such that $a_n \le C^n$ for all $n \ge 1$.
\end{lem}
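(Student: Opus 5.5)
The plan is to compare $a_n$ with the coefficients of a generating function that solves a cubic algebraic equation, and then use analyticity at the origin to get geometric growth. Since the recursion is monotone, I first replace it by a majorant. Set $b_0 = a_0$ and define, for $n\ge1$,
\begin{equation}
    b_n = C_0 \sum_{\substack{k+l+m \le n \\ 0 \le k,l,m \le n-1}} b_k b_l b_m .
\end{equation}
Then $b_n = a_n$ by definition. All terms are nonnegative, because $a_0>0$ and induction keep everything positive. So any sequence $c_n$ with $c_0\ge a_0$ and $c_n \ge C_0\sum_{k+l+m\le n} c_kc_lc_m$, where the sum is restricted to $k,l,m\le n-1$, dominates $a_n$ by induction on $n$.

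The restriction $k+l+m\le n$ (rather than $=n$) is handled by a factor $1/(1-z)$. Let $A(z)=\sum_{n\ge0} a_n z^n$ as a formal power series. Then $\sum_{n} z^n\sum_{k+l+m\le n} a_ka_la_m = A(z)^3/(1-z)$. The constraint $k,l,m\le n-1$ only excludes terms with some index equal to $n$. Such terms occur only when the other two indices are $0$, i.e. the terms $3a_0^2a_n$.

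Using this, I will define a comparison series $G(z)$ as the unique formal power series solution of
\begin{equation}
    G(z) = a_0 + C_0 z\,\frac{G(z)^3}{1-z}\cdot \frac{1}{1-z}\,(1+\cdots),
\end{equation}
or, more simply,
\begin{equation}
    G = a_0 + \frac{C_1 z\, G^3}{(1-z)^2}, \qquad C_1 = C_0\cdot(\text{suitable constant}).
\end{equation}
The extra factor $z$ is needed because I need a shift: $a_n$ is determined by lower $a_k$, while $k+l+m$ can equal $n$. The naive factor-$z$-free equation $G=a_0+C_0G^3/(1-z)$ would involve $a_n$ on the right with coefficient $3C_0a_0^2$. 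I will argue directly: for $n\ge1$ the coefficient of $z^n$ in $C_0A^3/(1-z)$ minus the excluded terms $3C_0a_0^2a_n$ is exactly $a_n$. Thus $A$ satisfies the functional equation
\begin{equation}
    A(1+3C_0a_0^2) = a_0(1+3C_0a_0^2) - C_0a_0^3 + \frac{C_0A^3}{1-z} - 3C_0a_0^2\cdot\Big(\frac{A}{1-z}-A\Big) + \cdots,
\end{equation}
and the error terms come from the index-$n$ corrections. Sorting out this bookkeeping cleanly is the main obstacle. To avoid it, I will use the cruder majorant approach: show by induction that $a_n\le c_n$, where $c_n$ are the coefficients of the solution $H$ of
\begin{equation}
    H = a_0 + \frac{C_0\big(H^3 - \text{(terms of } H^3 \text{ linear in } H-a_0)\big)}{1-z}.
\end{equation}
Equivalently, with $H=a_0+h$, the series $h$ solves $F(z,h)=0$ with $F(z,h) = h - \frac{C_0}{1-z}\big((a_0+h)^3 - 3a_0^2h\big)$ up to an adjustment making $h(0)=0$. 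This $h$ matches the recursion exactly, because the subtracted linear term removes precisely the index-$n$ contributions.

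Finally, $F$ is analytic near $(0,0)$ with $\partial_h F(0,0)=1\neq0$, after the constant is absorbed by the normalization $a_0$-terms. The analytic implicit function theorem then gives a convergent solution $h(z)$ with positive radius $\rho$, and Cauchy's estimates yield $a_n\le C\rho^{-n}\le C'^n$. The point to double-check is that the constant term works out: the coefficient of $z^0$ of the right side must equal $a_0$ consistently. If it does not, I would instead prove $a_n \le K^n$ directly by strong induction, choosing $K$ large. In that argument, the contribution $3C_0a_0^2a_n$ is absent and every remaining triple has all indices $\le n-1$. The count of triples with $k+l+m\le n$ weighted by $K^{k+l+m}$ is at most $\sum_{s\le n}\binom{s+2}{2}K^s$. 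This sum is bounded by $C K^{n}$ only if the top terms are controlled, so the direct induction needs the ansatz $a_n\le D K^n/(n+1)^2$, in the style of Catalan-type bounds, to beat the polynomial count. I would attempt that as the fallback.
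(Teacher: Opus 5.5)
The main route fails at the step where you say the series $h$ ``matches the recursion exactly.'' Placing the linear correction inside the factor $(1-z)^{-1}$ is wrong. The $z^n$-coefficient of $\frac{3C_0a_0^2h}{1-z}$ is $3C_0a_0^2(a_1+\cdots+a_n)$. So it removes not only the forbidden triples (the permutations of $(n,0,0)$) but also the legitimate triples $(j,0,0)$, $1\le j\le n-1$, which the recursion includes. Your $c_n$ therefore obey a recursion with fewer nonnegative terms, so $c_n\le a_n$. You have built a minorant, and the proposed induction $a_n\le c_n$ is false. For instance, take $a_0=C_0=1$ and adjust the constant so that $h(0)=0$. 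Your equation then reads $h=\frac{1+3h^2+h^3}{1-z}-1$, giving $c_1=1$, $c_2=4$, whereas $a_1=1$, $a_2=7$. The constant term you flagged is not the issue.

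In fact there is no bookkeeping obstacle. For $n\ge1$ the only excluded triples are the three permutations of $(n,0,0)$, so one has exactly
\[
(1+3C_0a_0^2)(A-a_0)=C_0\Big(\frac{A^3}{1-z}-a_0^3\Big).
\]
This is your first displayed identity with the spurious correction terms dropped. It is also the paper's identity $\frac{g^3}{1-x}-1=(C_0^{-1}+3)(g-1)$ when $a_0=1$.

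Let $F(z,A)$ be the difference of the two sides. Then $F(0,a_0)=0$ and $\partial_AF(0,a_0)=1+3C_0a_0^2-3C_0a_0^2=1$. The analytic implicit function theorem gives a solution analytic on some disc $|z|<\rho$. Its Taylor coefficients are the $a_n$, because the $z^n$-coefficient of the equation is precisely the recursion, which determines $a_n$ uniquely from lower terms. Cauchy's estimates then give $a_n\le M\rho^{-n}\le C^n$.

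So repaired, your argument is a legitimate alternative to the paper's. The paper rewrites the same identity as $f=xG(f)$ and applies Lagrange inversion to get $a_n=\frac1n[x^{n-1}]G(x)^n$. It then bounds those coefficients by hand, but writes this out only for $C_0=1$. Your route gives no explicit constant, but it treats all $C_0,a_0$ at once with no coefficient extraction.

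Your fallback induction can also be made to work, but not with the ansatz imposed at $n=0$. That would force $D\ge a_0$, while closing the cubic terms needs $D$ small. Instead, impose $a_n\le DK^n/(n+1)^2$ only for $n\ge1$. Choose $D$ small to absorb the triples with at most one zero index. Then choose $K$ large to absorb $(0,0,0)$ and the permutations of $(j,0,0)$.
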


\begin{proof}
We may assume $a_0=1$ by considering the sequence $a_n/a_0$ instead.

Define the formal power series
\begin{equation}
    g(x)= \sum_{n=0}^\infty a_n x^n.
\end{equation}
Let $[x^n]$ denote the coefficient of $x^n$ in a formal power series. Then
\begin{equation}
    [x^n]\frac{g^3(x)}{1-x}= \sum_{\substack{k+l+m \le n \\ 0 \le k,l,m \le n}} a_k a_l a_m= C_0^{-1} a_n+ 3a_n, \quad \forall n \ge 1.
\end{equation}
Note that $[x^0]\frac{g^3(x)}{1-x}=1$. We thus obtain a formal equation
\begin{equation}
    \frac{g^3(x)}{1-x}- 1= (C_0^{-1}+ 3)(g(x)- 1). 
\end{equation}
Let $f(x)=g(x)-1$. Then in the formal power series sense, we have
\begin{equation}
    f(x)= xG(f(x)),
\end{equation}
where
\begin{equation}
    G(f)= \frac{1+(3+C_0^{-1})f}{C_0^{-1}- 3f- f^2}.
\end{equation}
Since $G(0) \neq 0$, by the Lagrange inversion theorem \cite[Theorem~5.4.2]{Lagrangeinversion}, we deduce
\begin{equation}
    n[x^n]f(x)= [x^{n-1}] G(x)^n, \quad \forall n \ge 1.
\end{equation}
Thus $a_n= \frac{1}{n} [x^{n-1}] G(x)^n$ for $n \ge 1$. It suffices to show $[x^{n-1}] G(x)^n \le C^n$. 

Let us assume $C_0=1$ for simplicity. The general case can be handled similarly, but the formula is less transparent. Let $u(x)= (1-3x-x^2)^{-n}$ and $v(x)= (1+4x)^n$. Then
\begin{equation}
    [x^{n-1}]G(x)^n= \sum_{k=0}^{n-1} [x^k] u(x) \cdot [x^{n-k-1}]v(x).
\end{equation}
We have $[x^{n-k-1}]v(x) \le v(1)= 5^n$. For $u$, expanding $(1-y)^{-n}$ with $y=3x+x^2$ gives
\begin{equation}
    u(x)= \sum_{j=0}^\infty \binom{n+j-1}{j} (3x+x^2)^j. 
\end{equation}
Then we deduce
\begin{align}
    [x^k] u(x) &= [x^k] \sum_{j=0}^k \binom{n+j-1}{j} (3x+x^2)^j \\
    &\le \sum_{j=0}^k \binom{n+j-1}{j} (3x+x^2)^j \bigg|_{x=1} \le 4^k \sum_{j=0}^k \binom{n+j-1}{j}= 4^k \binom{n+k}{k}.
\end{align}
Since $\binom{n+k}{k}$ is increasing in $k$ and $k \le n-1$, we have
\begin{equation}
    [x^k] u(x) \le 4^n \binom{2n}{n} \le 4^n \cdot 2^{2n}= 16^n.
\end{equation}
We then conclude that $[x^{n-1}]G(x)^n \le 80^n$. This completes the proof.
\end{proof}

\bibliographystyle{amsplain}
\bibliography{ref}

\end{document}